\newtheorem{theorem}{Theorem}
\newtheorem{proposition}{Proposition}
\newtheorem{corollary}{Corollary}
\newtheorem{lemma}{Lemma}
\newtheorem{definition}{Definition}
\newtheorem{example}{Example}
\newtheorem{remark}{Remark}
\newcommand{\eqnref}[1]{(\ref{eq:#1})} 
\newcommand\ignore[1]{}
\newcommand{\funcdef}[5]{\begin{array}{rccccc} #1 & : & #2 & \to     & #3 \\
                                                &   & #4 & \mapsto & #5\end{array}}
\def\R{\mathbb{R}} 
\def\Z{\mathbb{Z}} 
\def\N{\mathbb{N}} 
\newcommand{\Ex}[1]{\mathbb{E}\left[#1\right]} 
\newcommand{\Ind}[1]{\mathbb{I}_{#1}} 
\renewcommand{\Pr}[1]{\mathbb{P}\left(#1\right)} 
\def\sB{\mathcal{B}}\def\sC{\mathcal{C}}
\def\sF{\mathcal{F}}
\def\sG{\mathcal{G}}
\def\sL{\mathcal{L}}
\def\sM{\mathcal{M}}\def\sN{\mathcal{N}}
\def\sP{\mathcal{P}}
\def\sT{\mathcal{T}}
\def\sX{\mathcal{X}}
\def\G{\mathcal{G}}
\newcommand\QED{\ifhmode\allowbreak\else\nobreak\fi
\quad\nobreak$\Box$\medbreak}
\newcommand{\proofstart}{\par\noindent\sl Proof:\rm\enspace}
\newcommand{\proofend}{\QED\par}
\newenvironment{proof}{\proofstart}{\proofend}
\def\eps{\epsilon}
\newcommand{\ER}{\mbox{Erd\"{o}s-R\'{e}nyi}}
\newcommand{\modulo}[1]{\left| #1\right|}
\newcommand{\pnorm}[2]{\left\|#2\right\|_{#1}}
\newcommand{\supnorm}[1]{\pnorm{\infty}{#1}}
\newcommand{\blnorm}[1]{\pnorm{BL}{#1}}
\newcommand{\maxx}[2]{#1 \wedge #2 }
\newcommand{\expp}[1]{\exp \left( #1 \right)} 
\newcommand{\dist}[3]{{\rm d}_{#1}\left(#2,#3\right)} 
\newcommand{\tH}[1]{\overline{\theta}_{#1}}
\newcommand{\oP}{\overline{P}}
\renewcommand{\vec}[1]{\mathbf{#1}}
\newcommand{\vecg}[1]{\boldsymbol{#1}}
\newcommand{\ovl}[1]{\overline{#1}}
\newcommand{\til}[1]{\widetilde{#1}}
\newcommand{\tilEx}[1]{\til{\mathbb{E}}\left(#1\right)}
\newcommand{\floor}[1]{\left\lfloor #1 \right\rfloor}
\begin{document}

\title{Interacting diffusions on sparse graphs: hydrodynamics from local weak limits}


\author{Roberto I. Oliveira\thanks{IMPA, Rio de Janeiro, Brazil. 22460-320. \texttt{rob.oliv@gmail.com},~\texttt{rimfo@impa.br}.  supported by a Bolsa de Produtividade em Pesquisa from CNPq, Brazil. His work in this article is part of the activities of FAPESP Center for Neuromathematics (grant \# 2013/07699-0, FAPESP - S. Paulo Research Foundation). }        \and
        Guilherme H. Reis\thanks{IMPA, Rio de Janeiro, Brazil. 22460-320. \texttt{ghreis@impa.br}. Supported by a Ph.D. scholarship from CNPq, Brazil (grant \# 140768/2015-7.)}  \and
        Lucas M. Stolerman \thanks{Department of Mechanical and Aerospace Engineering, University of California, San Diego, CA 92093, USA}
}

\maketitle

\begin{abstract}
We prove limit theorems for systems of interacting diffusions on sparse graphs. For example, we deduce a hydrodynamic limit and the propagation of chaos property for the stochastic Kuramoto model with interactions determined by $\ER$ graphs with constant mean degree. The limiting object is related to a potentially infinite system of SDEs defined over a Galton-Watson tree. Our theorems apply more generally, when the sequence of graphs (``decorated" with edge and vertex parameters) converges in the local weak sense. Our main technical result is a locality estimate bounding the influence of far-away diffusions on one another. We also numerically explore the emergence of synchronization phenomena on Galton-Watson random trees, observing rich phase transitions from synchronized to desynchronized activity among nodes at different distances from the root.   
\end{abstract}


\newpage
\section{Introduction}
\label{intro}


The results of this paper were inspired by a concrete problem. Let $n\in\N$ and $0<p(n)\leq 1$. Define the Erd\H{o}s-R\'{e}nyi random graph $G_n = G(n,p(n))$ as the random graph with vertex set $[n]:=\{1,\dots,n\}$ where two vertices are adjacent with probability $p(n)$, independently of all other pairs. Write $i\sim_{(n)}j$ if $i,j\in [n]$ are adjacent and let $d_i^{(n)}$ denote the degree of $i$ in $G_n$. We consider the stochastic Kuramoto model \cite{kuramoto2012chemical} over each realization of the graph $G_n$, which is defined as a system of interacting diffusions indexed by $i\in[n]$, solutions of the following system of It\^{o} Stochastic Differential Equations (SDEs) in time interval $[0,T]:$
\begin{equation}\label{eq:systembasic}{\rm d}\theta_i^{(n)}(t) = \frac{1}{d_i^{(n)}}\sum\limits_{j\in [n]\,:\,j\sim_{(n)} i} \sin(\theta_j^{(n)}(t) - \theta_i^{(n)}(t)){\rm d}t + \omega_i\,{\rm d}t + {\rm d}B_i(t).\end{equation}
Here the $B_i$ are independent Brownian motions, and the initial positions $\theta^{(n)}_i(0)$ and ``natural frequencies" $\omega_i$ are sampled from some product measure independently from the $B_i$ and $G_n$. We adopt the convention that the first term in the RHS of (\ref{eq:systembasic}) is zero in the case that $d_i^{(n)}=0.$

The following question arises. 


\noindent
\textbf{Problem:} \emph{What is the bulk behavior of this system when $n\to +\infty$ for different choices of $p(n)$?}

More precisely, we want to understand the behavior of the empirical measure of particle trajectories over a time interval $[0,T]$:
\[L_n:= \frac{1}{n}\sum_{v\in [n]}\delta_{\theta^{(n)}_v(\cdot)},\]
	that is a random measure over the space of continuous functions from $[0,T]$ to $\R,$ the space $C([0,T];\R).$
Our problem is potentially interesting because the graph $G_n$ can be very different depending on $p(n)$. For instance, when $n\to +\infty$, $G_n$ is typically connected if $p(n)\gg \log n/n$ and typically disconnected if $p(n)\ll \log n/n$.

As it turns out, all that matters for our problem is the behavior of $np(n)$ as $n\to +\infty$, which is the expected degree of a vertex in $G_n$ (up to a small error). In a recent paper \cite{reis2018}, we proved that $np(n)\to +\infty$  implies that $L_n$ has the same a.s. limit and obeys the same large deviations principle as in the case $p(n)\equiv 1$ of a complete interaction graph. In particular, the limit of $L_n$ is the law of a McKean-Vlasov diffusion, a Markovian process with trajectories in $C([0,T];\R)$. 

In this paper we complement the result for $np(n)\to +\infty$ by describing what happens when $np(n)\to c\in\R_+$. We prove that $L_n$ converges to the law of a non-Markovian process, which is described by a system of the form \eqnref{systembasic} on a potentially infinite Galton-Watson (GW) tree. The mechanism behind this fact is a general theorem relating the {\em local weak convergence of networks} to the hydrodynamics of systems of diffusions on these networks.

\begin{remark}To the best of our knowledge, the preprint version of this paper was the first work to relate systems of interacting diffusions to local weak convergence. A few months later, Lacker et al. independently arrived at more systematic (and essentially more general) results of the same kind. These are discussed in Section \ref{sec:discussion}.\end{remark}

Finally, we numerically investigate synchronization phase transitions for the stochastic Kuramoto Model on GW trees. In particular, we compute  synchronization levels among nodes at different distances from the root, by varying the coupling strength between oscillators, as well as their natural frequencies and initial conditions. In contrast with the full interaction case of the complete graph, we generally observe the emergence of desynchronization phenomena at distant nodes in the sparse setting.

In section \ref{sec:mainresults}, we give an informal description of our main results. In section \ref{sec:organization}, we make comments about the proofs, review past results, and give the outline of the remainder of the paper.

\section{Informal definitions and main results} \label{sec:mainresults}
\subsection{Infinite networks and interacting diffusions}
We will need the concept of a {\em network}. Informally, this is an object of the form
\[N=(G,\vecg{\mu},\vecg{\omega},\vecg{\theta(0)}),\] where:
\begin{enumerate}
\item $G=(V,E)$ is a locally finite graph with countable vertex set $V$ and edge set $E$.
\item $\vecg{\mu}  = (\mu_e)_{e\in E}$ is a vector of positive weights $\mu_e>0$ for the edges of $G$.
\item $\vecg{\omega} = (\omega_v)_{v\in V}$ is a vector of ``media variables" $\omega_v\in\R$ associated with the vertices. 
\item $\vecg{\theta(0)} = (\theta_v(0))_{v\in V}$ is a vector of initial conditions $\theta_v(0)\in \R$ for each vertex.
\end{enumerate}
We wil call $\vecg{\mu} $ the edge marks and $\vecg{\omega},\vecg{\theta(0)}$ the vertex marks. We will say that a network $N$ is {\em finite} if the graph $G$ is finite. We will often abuse notation and write ``$v\in N$" instead of ``$v\in V$". We also write $\mu_{vu}=\mu_{uv}:=\mu_e$ for the weights of pairs $e=\{v,u\}\in E$, and set $\mu_{vu}=0$ if $vu$ is not an edge. 

Suppose $N$ is given. Let $\psi:\R^2\to\R$, $\phi:\R^4\to \R$ be Lipschitz functions with only $\phi$ bounded, and define for each $v\in V$, the total weight 
\[\mu_v:=\sum_{u\in V}\mu_{uv} .\]
Assume that we have a collection $(B_v(\cdot))_{v\in V}$ of i.i.d. standard Brownian motions associated with the vertices of $N$. A {\em system of interacting diffusions on the network $N$} (with this choice of functions $\psi,\phi$) is a random vector 
\[\vecg{\theta}^N(\cdot)=(\theta^N_{v}(\cdot))_{v \in V}\in C([0,T];\R)^{|V|}\] which is a strong solution of the following system of  It\^{o} Stochastic Differential Equations (SDEs):  for each $v \in N$, 
\begin{eqnarray} \label{eq:sdeI} 
{\rm d}\theta^N_{v}(t)&=& \frac{1}{\mu_v}\sum_{u \in V}\mu_{uv}\phi(\theta^N_{u}(t),\theta^N_{v}(t);\omega_v,\omega_u){\rm d}t +\psi(\theta^N_{v}(t);\omega_v){\rm d}t + {\rm d}B_v(t)    ,
\end{eqnarray}
in the time interval $[0,T]$ and with initial conditions $(\theta_v(0))_{v\in N}$. Heretofore, we adopt the convention that the first term in the RHS of (\ref{eq:sdeI}) is $0$ whenever $\mu_v=0$.

When $N$ is finite, our conditions on $\psi$ and $\phi$ are more than sufficient to imply existence and uniqueness for this problem. Our first finding is that the same holds for infinite networks with at-most-exponential growth.

\begin{theorem}[Loose statement of Theorem \ref{theo:exttheta}]\label{thm:loose1} Suppose $N$ is infinite, but there exists a vertex $o\in N$ such that balls around $o$ grow at most exponentially. Also assume that the weights $\mu_{vu}\neq 0$ are uniformly bounded away from $0$ and $+\infty$. Then there exists a system of interacting diffusions over $N$ that is the unique strong solution of (\ref{eq:sdeI}).\end{theorem}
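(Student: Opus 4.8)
The plan is to build the solution as a limit of finite-volume approximations and to control the approximation error through a locality estimate. Fix a vertex $o$ as in the hypothesis and, for each $n$, let $B_n$ be the ball of radius $n$ around $o$; let $N_n$ be the finite network obtained by restricting $N$ to $B_n$, keeping the same edge and vertex marks and driving it with the same Brownian motions $B_v$. Since $N_n$ is finite and the drift coefficients are Lipschitz with $\phi$ bounded, the finite-network case already discussed above yields a unique strong solution $\vecg{\theta}^{(n)}=(\theta^{(n)}_v)_{v\in B_n}$. First I would record uniform-in-$n$ moment bounds: because $\phi$ is bounded, the coupling drift in \eqnref{sdeI} is bounded by $\sup|\phi|$ (using $\mu_v=\sum_u\mu_{uv}$), while the linear growth of $\psi$ together with a Gronwall argument controls $\Ex{\sup_{s\le T}|\theta^{(n)}_v(s)|^2}$ uniformly in $n$. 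These a priori bounds legitimize the manipulations below and the eventual dominated-convergence passage to the limit.

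The heart of the matter, and the main obstacle, is the \emph{locality estimate}: showing that for each fixed $v$ the family $\theta^{(n)}_v$ is Cauchy. For $m>n$, subtract the integral forms of \eqnref{sdeI} for $\theta^{(m)}$ and $\theta^{(n)}$; the Brownian terms cancel, so $\Delta_v:=\theta^{(m)}_v-\theta^{(n)}_v$ solves a random integral equation driven only by the drift. Writing $A_v(t):=\sup_{s\le t}|\Delta_v(s)|$ and using the Lipschitz bounds on $\phi,\psi$ together with the normalization $\sum_u \mu_{uv}/\mu_v=1$, one obtains a componentwise inequality of the schematic form
\[
A_v(t)\;\le\; L\int_0^t\Big(\sum_u \frac{\mu_{uv}}{\mu_v}\,A_u(s)+c\,A_v(s)\Big)\,ds \;+\;(\text{source on }\partial B_n),
\]
where the source is supported on vertices at distance $\ge n$ from $o$ (the only place the two systems genuinely disagree) and is $O(1)$ there because $\phi$ is bounded. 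Iterating this inequality in Picard fashion, the influence of the distance-$n$ source on $A_o(T)$ must propagate through at least $n$ applications of the stochastic averaging kernel $P_{vu}:=\mu_{uv}/\mu_v$, each application carrying one time integration and hence a factor of order $LT/k$. This produces a bound of the form $A_o(T)\le C\sum_{k\ge n}|B_k|\,(C'T)^k/k!$, and here the at-most-exponential growth $|B_k|\le Ce^{\alpha k}$ is exactly what makes the tail summable (the factorial beats the exponential), so it tends to $0$ as $n\to\infty$. This is where both hypotheses enter: at-most-exponential growth for the volume-versus-factorial competition, and weights bounded away from $0$ and $+\infty$ so that $P_{vu}$ is a genuine, well-behaved averaging kernel. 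Getting the bookkeeping of propagating paths to interact correctly with the factorial decay is the delicate step.

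Granting the Cauchy property, for each $v$ the processes $\theta^{(n)}_v$ converge in $L^2$, uniformly on $[0,T]$, to a limit $\theta_v$ with continuous trajectories. To verify that $\vecg{\theta}=(\theta_v)_v$ solves \eqnref{sdeI}, I would pass to the limit in the integral form one vertex at a time: the drift is a locally finite sum, and the locality estimate together with the uniform moment bounds justifies exchanging limit with summation and integration by dominated convergence, so $\theta_v$ satisfies the equation with the prescribed $B_v$ and initial condition. Finally, uniqueness follows from the same propagation estimate applied to the difference of any two strong solutions sharing the data: the source term now vanishes (equal initial conditions), and the iterated Gronwall inequality, again controlled by the exponential-growth/factorial competition, forces the difference to be identically zero on $[0,T]$.
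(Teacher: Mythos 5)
Your proposal is correct, and its overall architecture is exactly the paper's: solve on finite balls around $o$ with a common family of Brownian motions, prove a locality estimate showing that the boundary's influence on a fixed vertex decays superexponentially in the distance, deduce the Cauchy property, pass to the limit in the integral form of \eqnref{sdeI}, and reuse the locality estimate for uniqueness. Where you genuinely differ is in \emph{how} the locality estimate is proved. The paper runs a linear matrix Gronwall inequality to get $\vec{u}(t)\leq \exp(tM)\vec{a}$ with $M=C(\overline{P}+I)$ and source $\vec{a}$ supported on the boundary, then recognizes the matrix exponential as the heat kernel of a continuous-time random walk and quotes the Carne--Varopoulos bound, obtaining decay $|\partial H|e^{-r\log r}$. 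You instead iterate the componentwise Gronwall inequality in Picard fashion, using stochasticity of the kernel ($\sum_u \mu_{uv}/\mu_v=1$) so that $j$ applications cost $(C'T)^j/j!$, and that the boundary source cannot be reached in fewer than $n$ steps; the resulting tail $\sum_{j\geq n}(C'T)^j/j!$ is a Poisson tail, which is precisely what Carne--Varopoulos amounts to for continuous-time walks. So the two estimates are quantitatively equivalent; yours is more self-contained (no heat-kernel machinery), the paper's is more compact and yields the clean closed-form bound used later for continuity of $\ovl{\Theta}$. Three bookkeeping points you should make explicit to close your argument: (i) stochasticity of the kernel is essential and is doing all the work --- crude path-counting with $P_{uv}\leq 1$ would fail, since under exponential ball growth degrees can be as large as $ae^{ar}$ and the number of length-$j$ paths can then overwhelm $j!$; it is this normalization, not the two-sided weight bounds, that makes your iteration close. (ii) The Picard remainder term (the $m$-fold iterate applied to $A$ itself) vanishes only if $\sup_v \sup_{t\le T} A_v(t)<\infty$; this follows pathwise from boundedness of $\phi$, since the $\psi$-difference is a same-vertex term and scalar Gronwall gives $\sup_v A_v\leq 2\supnorm{\phi}Te^{LT}$ almost surely. (iii) Once you note (ii), the whole estimate is pathwise --- the Brownian motions cancel in the difference --- so the $L^2$/moment-bound scaffolding in your first paragraph is unnecessary, and the convergence you obtain is almost sure and uniform on $[0,T]$, which is exactly what the paper uses and what a strong solution requires.
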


\subsection{Local weak limits and hydrodynamics}

To state our next result, we need the concepts of {\em local metric} and {\em local weak convergence} of networks. Both of these concepts are defined precisely in Section \ref{sec:weak}; for now, we only give an informal description. 

The local metric is defined on {\em rooted networks}, that is, on pairs $(N,o)$ where $N$ is a network and $o\in N$. According to this metric, two networks $(N,o)$ and $(N',o')$ are close~if there are large balls around $o$ and $o'$ where both the graphs and the corresponding marks can be matched nearly exactly. Note that, for this to make sense, we need to consider these networks up to ``rooted isomorphisms"; see Section \ref{sec:weak} for details. 

Now, given a {\em sequence} of finite networks
\[\{N_n = (G_n,\vecg{\mu}_n,\vecg{\omega}_n,\vecg{\theta(0)}_n)\}_{n\in\N},\]
let $o_n$ be a node of $N_n$ chosen uniformly at random, for each $n\in\N$. Let $\sL(N_n,o_n)$ denote the law of the random rooted network $(N_n,o_n)$; all the randomness comes from the choice of $o_n$. We say that $\{N_n\}_{n\in\N}$ converges in the local weak sense to a distribution $\nu$ over rooted networks if the probability laws $\sL(N_n,o_n)$ converge weakly to $\nu$. 

\begin{remark}If we forget about the marks $\vecg{\mu}_n$, $\vecg{\omega}_n$ and $\vecg{\theta(0)}_n$, this is nothing but the better known concept of local weak convergence of graphs. In this case, it is known eg. that $n$-cycles converge to the deterministic rooted graph $\delta_{(\Z,0)}$; that the Erd\H{o}s-R\'{e}nyi graph $G(n,c/n)$ a.s. converges to a Poisson GW tree with parameter $c$; and that random $d$-regular graphs on $n$ vertices a.s. converge to the infinite $d$-regular tree. We show in Section \ref{sec:iidmarks} that if the marks are chosen independently, with each vector $\vecg{\mu}_n,\vecg{\omega}_n,\vecg{\theta(0)}_n$ i.i.d., then the corresponding networks converge a.s. in the local weak sense.\end{remark}

Now note that, for a arbitrary network $N$, if we can define a system of interacting diffusions over $N$ this gives rise to a {\em random network}
\[N^\theta = (G,\vecg{\mu},\vecg{\omega},\vecg{\theta^{N}(\cdot)}),\]
where the initial conditions are replaced by the particle trajectories in the time interval $[0,T]$. We are abusing notation by calling by the name network two different classes of objects. 

Coming back to the sequence of finite networks $(N_n)_{n\in\N}$, the next theorem relates the local weak convergence of $N^\theta_n$ to that of $N_n$. 

\begin{theorem}[Loose statement of Theorem \ref{theo:hydlimit} and Corollary \ref{cor:omitted}]\label{thm:loose2} Assume that $\nu$ is a probability measure on rooted networks which is supported over pairs $(N,o)$ satisfying the assumptions of Theorem \ref{thm:loose1}. Then for almost all samples $(N,o)\sim \nu$ we can solve the system of interacting diffusions (cf. (\ref{eq:sdeI})) as in Theorem \ref{thm:loose1} to consider $(N^\theta,o).$

Now consider a sequence of networks $\{N_n\}_{n\in\N}$, each $N_n$ with $n$ vertices, which converges in the local weak sense to $\nu$. Assume also that the largest vertex degree in $N_n$ is $n^{o(1)}$ for large $n$. Then, almost surely, the sequence $\{N^{\theta}_n\}_{n\in\N}$ of networks marked with the diffusions is locally weakly convergent to the law of $(N^\theta,o)$ when $(N,o)\sim \nu$. As a consequence, the empirical measures
\[L_n :=\frac{1}{n}\sum_{i=1}^n\delta_{\theta^{N_n}_i(\cdot)}\]
converge almost surely to the distribution of $\theta_o^N(\cdot)$ when $(N,o)$ is sampled from $\nu$.\end{theorem}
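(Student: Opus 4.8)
The plan is to derive the local weak convergence of the diffusion-marked networks $N_n^\theta$ from the assumed local weak convergence of the underlying networks $N_n\to\nu$, by showing that the operation ``solve \eqnref{sdeI} and read off the trajectory near the root'' is, in a quantitative sense, both \emph{local} and \emph{continuous} for the local metric. To test local weak convergence of $N_n^\theta$ it suffices to check, for every bounded functional $f$ of the marked rooted network, that
\[ \frac{1}{n}\sum_{i=1}^n \mathbb{E}\,f(N_n^\theta,i)\longrightarrow \mathbb{E}_{(N,o)\sim\nu}\,\mathbb{E}\,f(N^\theta,o), \]
where the inner expectations average over the driving Brownian motions. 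Taking the Brownian expectation first is the key simplification: the conditional law of the trajectories given the graph is itself a functional of the network, so after averaging, $f$ becomes (approximately) a function of the finite radius-$R$ ball of $(N_n,i)$, and one may then invoke the hypothesis directly.

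The technical heart is a \emph{locality estimate}. Writing $N^{(R)}$ for the network truncated to the ball $B_R(o)$ (deleting the outside edges, or freezing the exterior coordinates), I would prove
\[ \mathbb{E}\,\sup_{t\in[0,T]}\bigl|\theta_o^{N}(t)-\theta_o^{N^{(R)}}(t)\bigr|^2\le \varepsilon(R),\qquad \varepsilon(R)\to 0\ \text{as}\ R\to\infty. \]
The mechanism is a Picard/Gronwall iteration: since $\phi$ is bounded and Lipschitz and the total weights $\mu_v$ are bounded below, any influence of the boundary $\partial B_R(o)$ on the root by time $T$ must traverse $R$ edges, and each hop contributes a time integral, producing a suppression of order $(CT)^R/R!$. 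Summed against the number of length-$R$ paths out of $o$, controlled by the at-most-exponential growth assumption, this still tends to $0$ because the factorial beats exponential growth. The delicate point, which I expect to be the main obstacle, is \emph{uniformity in $n$}: for the finite networks I need $\frac1n\sum_{i}\mathbb{E}\,\sup_t|\theta_i^{N_n}(t)-\theta_i^{N_n^{(R)}}(t)|^2\le\varepsilon'(R)$ with $\varepsilon'(R)\to0$ uniformly in $n$, even though the maximum degree is only $n^{o(1)}$ rather than bounded. Here I would combine the fact that local weak convergence forces the empirical neighborhood-size distribution to converge with a uniform-integrability/tail control supplied by the $n^{o(1)}$ degree bound, so that atypical high-degree vertices contribute negligibly to the average.

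With truncation in hand, the map from the \emph{finite} rooted network $(N^{(R)},o)$ to the law of $(\theta_u^{N^{(R)}}(\cdot))_{u\in B_R(o)}$ is continuous in the local metric: on a fixed finite graph the solution of \eqnref{sdeI} depends continuously (indeed Lipschitz-continuously in $L^2$) on the weights, media variables and initial data, by standard continuous dependence of finite-dimensional SDE systems on their coefficients, and the local metric encodes precisely the closeness of these finite data. Hence $\mathbb{E}[f(N_n^\theta,i)\mid\text{graph}]$ is, up to the truncation error $\varepsilon'(R)$, a bounded continuous \emph{local} functional of $(N_n,i)$. For fixed $R$, the assumed a.s.\ local weak convergence $N_n\to\nu$ gives convergence of its root-average; letting $R\to\infty$ and using the locality estimate on both sides (an $\varepsilon/3$ argument, legitimate thanks to the uniformity above) upgrades this to the displayed convergence, i.e.\ to the a.s.\ local weak convergence of $N_n^\theta$ to the law of $(N^\theta,o)$. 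Throughout, the ``almost surely'' refers to the quenched graph randomness; given a realization along which $N_n\to\nu$, the remaining Brownian randomness lives inside the annealed expectations. Existence and uniqueness of the solved systems on the relevant infinite networks is supplied by Theorem~\ref{thm:loose1}.

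Finally, the convergence of the empirical measures $L_n$ follows in two steps. Its conditional expectation given the graph, $\mathbb{E}\!\left[\int g\,dL_n\mid\text{graph}\right]=\frac1n\sum_i\mathbb{E}[g(\theta_i^{N_n}(\cdot))\mid\text{graph}]$ for bounded continuous $g$ on $C([0,T];\R)$, is the root-marginal functional $(M,v)\mapsto\mathbb{E}\,g(\theta_v^M(\cdot))$ integrated against $\sL(N_n^\theta,o_n)$, so by the previous paragraph it converges a.s.\ to $\int g\,d\sL(\theta_o^N(\cdot))$. It then remains to show that $\int g\,dL_n$ concentrates around this conditional mean: because changing a single Brownian motion $B_j$ perturbs $\theta_i^{N_n}$ appreciably only for $i$ near $j$ (locality again), a bounded-differences/variance estimate yields concentration over the independent $B_i$, promoting the annealed convergence to a.s.\ convergence of the random measures $L_n$ to the deterministic limit $\sL(\theta_o^N(\cdot))$. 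One subtlety to verify is that this concentration holds jointly with the quenched graph convergence, which a Borel--Cantelli argument along a suitable sequence should deliver.
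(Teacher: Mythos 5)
Your overall strategy --- (i) a locality estimate with factorial-beats-exponential decay, (ii) continuous dependence of finite SDE systems on the network data, (iii) truncation to a finite radius combined with local weak convergence, and (iv) bounded-differences concentration over the Brownian motions --- is essentially the paper's own proof: your $(CT)^R/R!$ suppression is the paper's Locality Lemma~\ref{carne:thecorollary} (proved by a linear Gronwall argument plus a Carne--Varopoulos heat-kernel bound, giving $C|\partial H|e^{-R\log R}$), your continuous-dependence step is Lemma~\ref{le:samegraph}, and your concentration step is Lemma~\ref{le:concentration}. However, there is one genuine gap: you place the ``almost surely'' on the wrong source of randomness, and as a result your main argument proves a strictly weaker statement than the theorem. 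In the theorem, $\{N_n\}$ is a fixed deterministic sequence (any graph randomness is already quenched in the hypothesis $N_n\to\nu$), so the only randomness in $N_n^\theta$ is that of the Brownian motions, and ``almost surely locally weakly convergent'' means: with probability one \emph{over the Brownian motions}, $U(N_n^\theta)(h)\to\nu\ovl{\Theta}(h)$ for all test functions $h$ on $\sC^*$. Your displayed goal, $\frac1n\sum_i\Ex{f(N_n^\theta,i)}\to\int\Ex{f(N^\theta,o)}\,d\nu$, is the annealed version, and you explicitly assert that ``the remaining Brownian randomness lives inside the annealed expectations'' --- that is precisely what is not allowed. The concentration argument you defer to the last paragraph and apply only to $L_n$ is in fact needed for the main statement itself, applied to every bounded-Lipschitz test function on the space of trajectory-marked networks (a countable dense family suffices); this is exactly what the paper does, running Azuma plus Borel--Cantelli on $U^{(r)}(N_n^{\theta})(h)-\Ex{U^{(r)}(N_n^{\theta})(h)}$ for each fixed truncation radius $r$, and then removing the truncation with the locality estimate.

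Two further points, both fixable but worth flagging. First, Azuma requires \emph{deterministic} bounds on the effect of resampling one $B_w$, while your locality estimate is stated only in $L^2$, which cannot be fed into a bounded-differences inequality. Since $\phi$ is bounded, the locality bound actually holds pathwise (this is why Lemma~\ref{carne:thecorollary} is an almost-sure statement), and the clean way to get exact bounded differences is to truncate \emph{before} concentrating: on $U^{(r)}(N_n^\theta)$, changing $B_w$ affects only the at most $n^{(r+1)\eps_n}$ vertices whose $r$-balls contain $w$. This Azuma step is where the $n^{o(1)}$ maximum-degree hypothesis is genuinely used. Second, your attribution of the uniformity-in-$n$ of the truncation error to the degree bound is off: a vertex of degree $n^{o(1)}$ has an $r$-ball boundary far too large for $e^{-r\log r}$ to beat at fixed $r$. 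What saves the average is that each error term is capped at $2$ (since $\blnorm{h}\leq 1$) and that, by local weak convergence of $N_n$ together with niceness of $\nu$, the fraction of vertices $v$ with $|\partial(N_n(v),v)_r|>ae^{ar}$ converges to $1-\nu(\sB^*_a)$, which vanishes as $a\to\infty$; the degree bound plays no role in that step.
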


 Our results also imply a propagation-of-chaos property (see Corollary \ref{theo:chaos}). 

\subsection{Synchronization phenomena and sparsity}


We come back to the particular case of stochastic Kuramoto model. Our results from theorems \ref{thm:loose1} and \ref{thm:loose2} motivate us to explore synchronization phenomena on  finite GW trees, since they appear as the limit object from a sequence of random Erd\"{o}s-R\'{e}nyi graphs.  If we denote $\sT$ as the random GW tree with $m$ vertices rooted at vertex $1$, we consider the system of SDEs: for each $i \in [m]$,
\[{\rm d}\theta_i^{\sT}(t) = K \sum_{j=1}^m a_{ij} \sin(\theta_j^{\sT}(t) - \theta_i^{\sT}(t))  {\rm d}t+\omega_i{\rm d}t + \varepsilon {\rm d}B_i(t),\]
where $\theta_j^\sT(t)$ and $\omega_j$ represent the angular phase and natural frequency of the oscillator indexed by  $j \in \{1,2, \hdots, m\}$, respectively.  The parameter $K \in \mathbb{R_+}$ represents the coupling strength between nodes, and $a_{ij} =1$ if nodes $i$ and $j$ are connected in $\sT$ or $a_{ij} =0$ otherwise. In our numerical analysis, we do not divide the summation over neighbours of $i$ by the degree of $i$. We sample both  initial conditions $\theta_j^{\sT}(0)$ and natural frequencies $\omega_i$ from distinct distributions, and by changing the coupling strength between  nodes, we compute synchronization levels  between the root and those nodes at different distances. In our simulations,  we chose two different models for generating the GW trees:

\begin{enumerate}
\item \textbf{Binomial model:} The offspring is a binomial random variable  with distribution $Bin(n,p)$.
\item \textbf{D-regular model:} The root node has $\mathcal{C}$ children, while the other ones have exactly $\mathcal{C} -1 $ children.
\end{enumerate}

In Section \ref{numeric}, we describe our numerical methods and results in details. Interestingly, we observe how desynchronization emerges among distant nodes, depending on the  choice of the model parameters. These findings enlighten our understanding of synchronization in complex networks and pave the way for new phase transition studies on Kuramoto dynamics.

\section{Comments, references and organization}\label{sec:organization}
\subsection{Comment on proofs}
We now briefly comment on our proofs. The key step is to show that our system (cf. (\ref{eq:sdeI})) satisfies a locality property, Lemma \ref{carne:thecorollary} below. Loosely speaking, this property states that information does not propagate too fast over the graph in systems like \eqnref{sdeI}. To prove this Lemma, we rely on a linear Gronwall argument, which leads to a matrix exponential. A nice wrinkle in the proof is that this exponential can be related to a heat kernel for a random walk over a network, which we can analyze via the Carne-Varoupoulos bound. With this Lemma in hand, our main results follow easily from general principles, including the definition of weak convergence. 

One last comment is that it seems clear that our result is an exemplar of a more general principle. One can gather from our arguments that ``local" systems of particles on graphs should have a ``local hydrodynamic limit" whenever the sequence of underlying graphs converges. In this sense, our main technical contributions consist of formulating this principle precisely and proving the required locality estimate in our setting. 

\subsection{Discussion}
\label{sec:discussion}
As stated above, our motivation was to understand what happens to interacting difusions in the simple case of an Erd\"{o}s-R\'{e}nyi graph with a constant average degree. Our recent preprint \cite{reis2018} showed that the entire regime of a diverging average degree has the same behavior, even at the level of large deviations, as the complete graph (mean-field interactions). Of course, proving an LDP in the setting of the present paper is an interesting topic for further study.

We continue with a very brief review of the literature, referring to \cite[Section 1.2]{reis2018} for more details. The study of our class of systems over complete graphs is a classical topic; see eg. \cite{Sznitman_Chaos,Pra1996} for early results. Recent papers have obtained hydrodynamic limits in settings with singular interactions \cite{lucon2014,lucon2016} or Gaussian couplings and delays \cite{Cabana2013,Cabana2015,Cabana20182}. More recently, several authors \cite{medvedev-meanfield,medvedev-sparse,Delattre2016} have explicitly considered the case of relatively sparse random graphs. A recent preprint by Coppini et al. \cite{coppini2018} obtains an LDP under a stronger degree condition than in our paper \cite{reis2018}, but with otherwise weaker or incomparable assumptions. To the best of our knowledge, the present work is the first paper to explore how interacting diffusions behave over random graphs of constant average degree. 

A few months after the first version of our paper appeared in the Arxiv, Lacker, Ramanan and Wu \cite{Kavita1,Kavita2} posted two preprints where they consider related systems of interacting diffusions over sparse graphs. Outside of our considering weighted edges, their results greatly generalize ours, by allowing the drift and diffusion coefficients of a vertex to depend nonlinearly on the empirical measures of neighboring vertices. Additionally, they make weaker requirements on the sequences of graphs. One technical difference is that our proofs give more quantitative estimates on correlation decay, whereas they rely on ``softer" weak convergence tools. Their preprint \cite{Kavita2} obtain closed-form descriptions of the non-Markovian dynamics of a vertex and its neighbors.

From the synchronization viewpoint, our study introduces novel results for the Kuramoto Model on GW trees. Over the past years, many studies have analyzed synchronization phenomena on various network topologies (\cite{arenas2008synchronization,li2006phase}), yet little attention has been given to sparse random trees. More recently,
Chiba et al. \cite{chibamedvedevmizuhara2018} studied transitions to synchronization for a large family of random graphs,  relating their onset of synchronization and the well-known phase transition for the fully connected network. 
With a more computational approach, Sokolov and Ermentrout \cite{sokolov2018sync} related network structure  with global stability of phase-locked solutions. For power-law random networks, Medvedev and Tang \cite{medvedev2018kuramoto} studied the effects of scale-free connectivity and compared the synchronization thresholds with dense graphs. In contrast with all those recent findings, our analysis on GW-trees allows to investigate the emergence of \emph{desynchronizaton} among  nodes that are distant from the root, which illustrates how  full synchronization is not always achievable by increasing the coupling strength beyond a fixed value.




\color{black}
\subsection{Organization of the paper}
The remainder of the work is organized as follows: Section \ref{sub:preliminaries} reviews notation of functions and measures. In Section \ref{sub:preliminaries} we also present some preliminaries about graphs. Section \ref{sec:weak} reviews networks and local weak convergence. The reader familiar with networks and local weak convergence just need to read this section to know what notation we adopted here. Section \ref{sec:idg} states in full details our main results. 

We prove the Locality lemma in Section \ref{sec:locality}, and the other main results are derived from this lemma in subsequent sections. We solve the infinite system of SDEs  in Section \ref{sec:conttheta}, and we address the  hydrodynamic limit  in Section \ref{sec:proof:theo:hydlimit}.

Finally, in Section \ref{numeric} we present numerical simulations to discuss the synchronization phenomena. Auxiliary results are found in the Appendix, starting at section \ref{app:examples}.

\newpage
\section{Preliminaries}\label{sub:preliminaries}

In this section we fix notation and briefly review some important concepts. 
\subsection{Numbers}
$\N$ is the set of nonnegative integers. For a natural number $n\in\N\backslash\{0\}$, we let $[n]:=\{1,\dots,n\}$. We define the maximum and minimum of two numbers $x,y\in\R$ by $x\vee y$ and $x\wedge y$, respectively. We define $\R_+=\{x\in\R:x>0\}.$

\subsection{Functions and spaces of probability measures} \label{sub:polish} Let $(S,{\rm d})$ be a Polish metric space. We define \[C(S;\R):=\{h:S\to \R\,:\, h \,\mbox{ is continuous} \},\] and for a map $h:S\to \R$, we have the norms:
\begin{eqnarray*}\pnorm{\infty}{h} &:=&\sup_{x\in S}|h(x)|;\\
\pnorm{Lip}{h} &:=&\sup_{x,y\in S\,:\, x\neq y}\frac{|h(x)-h(y)|}{{\rm d}(x,y)};\\
\pnorm{BL}{h} &:=& \pnorm{\infty}{h} + \pnorm{Lip}{h}.\end{eqnarray*}

 We let $\sP(S)$ denote the set of probability measures over (the Borel sets of) $S$. If $X \in S$ is a random element we denote $\delta_X \in \sP(S)$ the Dirac measure at $X$ which is a random measure.

Given a measure $\mu\in\sP(S)$ and a Borel function $h:S\to\R$ we write
\[\mu(h)=\int_S h{\rm d}\mu.\]  

If $X\in S$ is a random element, and $\Ex{\,\cdot\,}$ is the expectation in the probability space that $X$ is defined, we write $\sL(X)\in \sP(S)$ for its law: \[\sL(X)(h)=\Ex{h(X)}.\]
The topology of weak convergence in $\sP(S)$ is metrized by the Bounded-Lipschitz metric defined for $\mu,\nu\in\sP(S)$:
\[{\rm d}_{BL}(\mu,\nu):= \sup\left\{\mu(h)-\nu(h) \,:\, h:S\to  \R,\, \pnorm{BL}{h}\leq 1\right\}.\]

If $X$ and $Y$ are random elements in $S$ defined on the same probability space and $\sL(X)= \mu$ and $\sL(Y)= \nu$ then
\begin{eqnarray} \label{eq:boundonBL} 
{\rm d}_{BL}(\mu,\nu)\leq \Ex{\maxx{{\rm d}(X,Y)}{2}}.
\end{eqnarray}

\subsection{Graphs}

In this paper, a graph $G=(V,E)$ has vertex set $V$ and unoriented edge set $E$. The set $V$ is either finite or countably infinite. We write $x\sim y$ to denote that $xy=yx:=\{x,y\}\in E$. Notice that we allow $x\sim x$ (i.e. a loop edge). The degree $d_x$ of $x\in V$ is the number of $y\in V$ with $y\sim x$. When we need to specify the dependency on $G$ we write $V_G,$ $E_G$, $x\sim_G y$,  and $d_x^G$. We always assume $G$ is {\em locally finite}, i.e. $d_x<+\infty$ for all $x\in V$. We write $|G|$ and $e(G)$ for the number of vertices and edges in $G,$ respectively.

Given a subgraph $H\subset G$ we define
\begin{enumerate}
\item $\partial H=\{v \in H: \exists\, u \,\in G\setminus H \mbox{ with }\ v\sim u \},$
\ignore{acho que nao usamos isso \item $int.H=H\setminus \partial H,$}
\item We write ${\rm dist}(v,u)$ for the distance between $v,u\in V$, i.e., the size of the shortest path between $v$ and $u$ in $G$,
\item ${\rm dist}(v,\partial H)=\inf_{u \in \partial H}{\rm dist}(v,u),$ and
\item For a subset of vertices $H_0 \subset H$ we define
\[{\rm dist}(H_0,\partial H)=\inf_{v \in H_0} {\rm dist}(v,\partial H).\]
\end{enumerate} 

We will also consider weighted graphs. For a graph $G=(V,E)$ the vector $\vecg{\mu}=(\mu_e)_{e\in E}$ is a vector of weights for $G$ if $\mu_e>0$ for any $e\in E.$ To each vector of weights $\vecg{\mu}$ we can associate a matrix $(\mu_{vu})_{v,u \in V}$ such that for $v,u \in V$
\begin{itemize}
\item if $e=\{v,u\}\in E$ then $\mu_{vu}=\mu_{uv}=\mu_e,$ and
\item if $\{v,u\}\notin E$ then $\mu_{vu}=\mu_{uv}= 0.$
\end{itemize}
We write $\mu_v=\sum_{u\in V}\mu_{vu}$ for the total weight of $v$. We say that $(G,\vecg{\mu})$ is a weighted graph and we identify the vector $\vecg{\mu}$ with the associated matrix $(\mu_{vu})_{v,u\in V}$.\\

\subsection{Models of random graphs}\label{sub:randomgraphs}

Some examples of our theory are related to random graph models. Given $n\in\N$, $p\in [0,1]$, the $\ER$ random graph $G(n,p)$ is the random graph with vertex set $[n]$ with no loops, where any two distinct $x,y\in [n]$ are adjacent with probability $p$, independently of all other pairs. We consider (as is customary) sequences of random graphs $G(n,p)$ where $p=p(n)$ may depend on $n$.

Given $n\in \N$ and $\vec{d}=(d_1,\dots,d_n)\in \N^n$, we let $G(n,\vec{d})$ denote a random graph with degree sequence $\vec{d}$, i.e. a graph that is chosen uniformly at random from the set of graphs $G$ with $V(G)=[n]$, no loops, and $d_v^G=d_v$ for each $v\in [n]$. This makes sense only for certain sequences $\vec{d}$. One important particular case is that of random $d$-regular graphs, where $\vec{d}=(d,d,\dots,d)$ for some $d\geq 3$ and we only need to assume $dn$ even.

\section{Local metrics and weak convergence of networks}\label{sec:weak}

In this section, we review the basic aspects of the local topology and local weak convergence of networks. We start with the case of graphs, which is better known. We then discuss the case of networks with more details. Our main references are the survey by Aldous and Steele \cite{aldousS2004}, the lecture notes by Bordenave \cite{bordenaveg}, and the paper \cite{CharleB}.

\subsection{Rooted graphs and local weak convergence}\label{sub:localweak}

When we consider sparse random graphs, we will need to consider their {\em local weak limits}. 

A rooted graph $(G,o)$ consists of a (countable, locally finite) graph $G$ with a distinguished vertex $o\in V_G$. Two rooted graphs $(G,o)$, $(H,p)$ are rooted isomorphic ($(G,o)\cong (H,p)$) if there exists a bijection $f:V_G\to V_H$ mapping $o$ to $p$ and preserving edges. The space $\G^*$ of rooted graphs considered up to isomorphisms can be endowed with a metrizable ``local topology"~that makes it a Polish space. Therefore, we may speak of random elements in this space (we will define a more general metric on networks below). 

Given $r\in\N$, $(G,o)_r$ is the rooted graph with root $o$ that contains the vertices $x\in V_G$ within distance $r$ from $o$, and all the edges between these vertices. $[G,o]_r$ is the equivalence class of $(G,o)_r$. We write $G(v)$ for the connected component of $v$ in $G$. We write $(G(v),v)$ for the graph $G(v)$ rooted at $v$ and $[G(v),v]$ for  its equivalence class up to isomorphism. 

\begin{definition}\label{def:lwlgraphs}
For each finite graph $G\in\sG$ we define the empirical neighbourhood distribution:
\[U(G)=\frac{1}{|V_G|}\sum_{v\in V_G}\delta_{[G(v),v]}.\]
We say that a sequence of finite graphs $G_n$ converges locally weakly to the measure $\rho \in \sP(\G^*)$ if
\[U(G_n)\to \rho \mbox{ in the weak topology of } \sP(\sG^*).\]
If the sequence of finite graphs $G_n$ is random then we say $G_n$ converges almost surely to $\rho$ in the local weak sense if the locally weakly convergence of $G_n$ to $\rho$ holds in a set of probability $1$ with respect to the law of the sequence $G_n$.
\end{definition}
\ignore{ NAO UTILIZAMOS ISSO
Another way to state this property is the following. Take a rooted graph $(G,o)$ with distribution $\rho.$ The local weak convergence of $G_n$ to $\rho$ says that when we select $v_n\in V(G_n)$ uniformly at random, we have that, for any fixed $r$ and any fixed  rooted graph $(H,p)$:
\[\Pr{(G_n,v_n)_r\cong (H,p)_r}\stackrel{n\to +\infty}{\longrightarrow} \Pr{(G,o)_r\cong (H,p)_r}.\]
Equivalently, $G_n$ converges locally weakly to $(G,o)$ if and only if one may couple $(G_n,v_n)$ and $(G,o)$ such that
\[\Pr{(G_n,v_n)_r\cong (G,o)_r}\stackrel{n\to +\infty}{\longrightarrow} 1.\]
We offer some examples of this definition.
}

\begin{example}\label{exe:cycle}Cycle graphs $C_n$ with $n$ vertices locally weakly converge to $\delta_{(\Z,0)}$.\end{example}

\begin{example}\label{exe:er}Suppose that, for each $n$, $G_n$ has the law of the $\ER$ random graph $G(n,c/n)$ for $c>0$ constant. Then for almost all realizations of $G_n$, the sequence $G_n$ locally weakly converges to the rooted GW tree with Poisson offspring distribution with mean $c$.\end{example}

\begin{example}\label{exe:degree}Suppose that for each $n\in\N$ we have a vector \[\vec{d}_n=(d_{n,1},\dots,d_{n,n})\in \N^n.\] Assume the sequence $\vec{d}_n$ has $\max_{1\leq i\leq n}d_{i,n}\leq n^{\eps_n}$ with $\eps_n\to 0$ and the measures
\[P_n:=\frac{1}{n}\sum_{i=1}^n\delta_{d_{i,n}}\]
converge weakly to some $P$ with finite first moment. If we sample $G_n$ from $G(n,\vec{d}_n)$, then for almost all realizations of $G_n,$ $G_n$ locally weakly converges to the unimodular rooted GW tree $UGW(P)$, where the root has offspring distribution $P$, and all other nodes have offspring distribution
\[\widehat{P}(k):= \frac{(k+1)\,P(k+1)}{\sum_{i=1}^{\infty}\,iP(i)}\,\,(k\in\N).\] 
In particular, if $\vec{d}_n=(d,d,d,\dots,d)$ for all $n$, $UGW(P)$ is the infinite (deterministic) $d$-regular tree rooted at a node.\end{example}

\subsection{Rooted networks and local weak convergence}
\label{sec:networks}
 Roughly speaking, a network is a graph $G=(V,E)$ with parameters (or marks) associated to the vertices and edges of $G$. The parameters (or marks) lie in some metric space.

More specifically, let $(\Upsilon,{\rm d}_{\Upsilon})$ and $(\Xi,{\rm d}_{\Xi})$ be two Polish metric spaces. A network $N=(V,E,\vecg{\upsilon},\vecg{\xi})$ is a graph $G=(V,E)$ together with the vectors \[\vecg{\upsilon}=(\upsilon_v)_{v\in V}\in \Upsilon^{|V|} \mbox{ and }\vecg{\xi}=(\xi_e)_{e\in E}\in \Xi^{|E|}\] that gives marks to the vertices and edges of $G$, respectively. We write $\sN_{(\Upsilon,\Xi)}$ for the space of all theses networks with the mark spaces fixed.

 We say that $\ovl{N}=(\ovl{V},\ovl{E},\ovl{\vecg{\upsilon}},\ovl{\vecg{\xi}})$ is a sub-network of $N$ if $(\ovl{V},\ovl{E})$ is a induced sub-graph of $(V,E)$, the vector $\ovl{\vecg{\upsilon}}$ is the restriction of $\vecg{\upsilon}$ to $\ovl{V}$  and $\ovl{\vecg{\xi}}$ is the restriction of $\vecg{\xi}$ to $\ovl{E}.$ In this case we also say that the sub-network $\ovl{N}$ is induced by the sub-graph $(\ovl{V},\ovl{E}).$

When we write a graph property for a network $N=(V,E,\vecg{\upsilon},\vecg{\xi})$ it is implicitly assumed that this property holds for the underlying graph. For example, $V_N:=V_G$, $E_N:=E_G$ and for a vertex  $v\in N$, $d_v^N=d_v^G$. The boundary of a sub-network $\ovl{N}$ of $N$ is the boundary of the corresponding graphs.

Consider two networks $N=(V,E,\vecg{\upsilon},\vecg{\xi})$ and $N'=(V',E',\vecg{\upsilon}',\vecg{\xi}')$ belonging to $\sN_{(\Upsilon,\Xi)}$. A network isomorphism $\Psi$ between $N$ and $N'$ is a bijection 
$\Psi:V\to V'$ between the vertex sets that preserves edges and marks:
\begin{itemize}
\item $\{u,v\}\in E$ if and only if $\{\Psi(u),\Psi(v)\}\in E',$
\item $\upsilon'_{\Psi(u)}=\upsilon_u$, $\forall u \in V,$ and $\xi'_{\{\Psi(u),\Psi(v)\})}=\xi_{\{u,v\}},$ $\forall \{u,v\}\in E.$
\end{itemize}

A rooted network $(N,o)$ is a network $N$ with a distinguished vertex $o$. Two rooted networks $(N,o)$ and $(N',o')$ are rooted isomorphic if there is a network isomorphism that sends $o$ to $o'.$ 

Given a rooted network $(N,o)$ and a radius $r\in \N$ let $(N,o)_r$ be the network induced by $(G,o)_r$ rooted at the vertex $o.$ Sometimes we identify the rooted network $(N,o)_r$ with its underlying network (without the root).

For a rooted network $(N,o)$ with  $N\in \sN_{(\Upsilon,\Xi)}$ we associate its equivalence class $[N,o]$ of rooted isomorphism.  Define \[\sN^*_{(\Upsilon,\Xi)}=\{[N,o]: \,(N,o)\,\mbox{ is a rooted network with mark spaces }(\Upsilon,\Xi)\}.\]

We now define a notion of distance over rooted networks up to isomorphism. This is not the exact same notion as in  \cite{bordenaveg}, but it is equivalent to it, as a simple calculation shows. 

Consider two rooted networks $(N,o)=(V,E,\vecg{\upsilon},\vecg{\xi},o)$ and $(N',o')=(V',E',\vecg{\upsilon}',\vecg{\xi}',o')$ belonging to $\sN^*_{(\Upsilon,\Xi)}$. Given $r\in \N$ and $\delta>0$, we say that the pair $(r,\delta)$ is good for $(N,o),(N',o')$ if there exists a rooted isomorphism $\Psi$ between $(N,o)_r$ and $(N',o')_r$ such that the corresponding marks are close by $\delta$: 
\begin{itemize}
\item ${\rm d}_\Upsilon(\upsilon_v,\upsilon'_{\Psi(v)})< \delta$, $\forall\ v \in (N,o)_r,$ and
\item ${\rm d}_\Xi\left(\xi_{\{u,v\}},\xi'_{\{\Psi(u),\Psi(v)\}}\right)< \delta,$ $\forall\ \{u,v\}\in (N,o)_r.$
\end{itemize}
The distance between $[N,o]$ and $[N',o']$ is defined by
\begin{align}\label{def:distN}
\dist{\sN^*_{(\Upsilon,\Xi)}}{[N,o]}{[N',o']} = \inf\left\{\frac{1}{1+r} + \delta\,:\,(r,\delta)\mbox{ is good for $(N,o),(N',o')$}\right\}.
\end{align} 

One can show that $(\sN^*_{(\Upsilon,\Xi)},{\rm d}_{\sN^*_{(\Upsilon,\Xi)}})$ is Polish (cf. \cite{bordenaveg}).

 Given two classes of rooted networks, we can define the distance between them using any representatives of these classes. This is well defined since the distance between rooted networks in invariant up to rooted isomorphism.

Sometimes we identify a rooted network with its equivalence class.

For a network $N$ and a vertex $v \in V$ we associate the rooted network $(N(v),v)$ that is the network induced by the connected component of $v$ rooted at $v$.
 
We write $[N(v),v]$ for the equivalence class of the rooted network $(N(v),v)$. For  a finite network $N$ with vertex set $V$ we define the empirical measure neighbourhood:
\[U(N)=\frac{1}{|V|}\sum_{v\in V}\delta_{[N(v),v]}.\]

\begin{definition}[Local weak convergence]\label{def:localweak} Consider a sequence of networks \[N_n=([n],E_n,\vecg{\upsilon}_n,\vecg{\xi}_n)\in\sN_{(\Upsilon,\Xi)}.\] Let $\rho \in \sP\left(\sN^*_{(\Upsilon,\Xi)}\right)$. We say that $N_n$ converges locally weakly to $\rho$ if
\[U(N_n)\to \rho\]
in the sense of weak convergence. If the sequence of finite networks $N_n$ is random then we say that $N_n$ converges almost surely to $\rho$ in the local weak sense if the locally weakly convergence of $N_n$ to $\rho$ holds in a set of probability $1$ with respect to the law of the sequence $N_n$.\end{definition}

\subsubsection{Networks with i.i.d. marks that converge locally weakly}
\label{sec:iidmarks}
Now we give examples of networks that satisfy Definition  \ref{def:localweak}.

Let $G=(V,E)$ be a graph. In Section \ref{sec:mainresults} we introduced the networks
\[N=(G,\vecg{\mu},\vecg{\omega},\vecg{\theta(0)})\]
where we have the vector of weights $\vecg{\mu}=(\mu_e)_{e\in E}\in \R_+^{|E|}$ that are marks for the edges of $G$, environment or ``media" variables $\vecg{\omega}=(\omega_v)_{v\in V} \in \R^{|V|},$ and a vector of initial conditions $\vecg{\theta(0)}=(\theta_v(0))_{v\in V}\in\R^{|V|}$ that are marks for the vertices of $G$. 

\begin{definition}\label{def:spaceofnetworks} We write $\sN$ for the collection of networks $N=(G,\vecg{\mu},\vecg{\omega},\vecg{\theta(0)})$ with edge marks in $\R_{+}$ and vertex marks in $\R\times\R$. When we distinguish a root for $N$ we write $\sN^*$ for the collection of rooted networks up to isomorphism.  
\end{definition}

Our first goal is to show that for a sequence of graphs $(G_n)_{n\in\N}$ and under some additional conditions we can construct interesting examples of random networks $N_n\in\sN$ and a probability measure $\nu\in\sP(\sN^*)$ such that the sequence $(N_n)_{n\in\N}$ converges to $\nu$ in the local weak sense, for almost all realization of the marks.

For any fixed graph $G=(V,E)$ we construct a probability space where we can define the vectors \[\vecg{\mu}=(\mu_e)_{e\in E},\vecg{\omega}=(\omega_v)_{v\in V},\,\mbox{and }\vecg{\theta(0)}=(\theta_v(0))_{v\in V} \mbox{ and they are independent}.\]

When we need to explicit the dependency on $G$ we write $\vecg{\mu}^G,$ $\vecg{\omega}^G,$ and $\vecg{\theta(0)}^G.$ 

Fix the measures $\pi,\lambda\in\sP(\R)$. For a fixed vertex $v$, $\pi$ is the distribution of the media variables $\omega_v$ and $\lambda$ is the distribution of the initial conditions $\theta_v(0).$ Fix a measure $\mu\in\sP(\R_+)$ for the distribution of the weights $\mu_{e}$ for an edge $e\in E.$

We define by $N^G\in\sN^*$ the random network obtained from $G$ by adding these random marks.

In this way we have a transition kernel that associate to each rooted graph $(G,o)$ the law of the random rooted network $(N^G,o)$
\[\funcdef{\sM}{\sG^*}{\sP(\sN^*)}{(G,o)}{\sL(N^G,o)},\]
that is, $\sM(G,o)(h)=\tilEx{h(G,o,\vecg{\mu}^G,\vecg{\omega}^G,\vecg{\theta(0)}^G)}$ where $\tilEx{\cdot}$ is the expectation of the probability space where the marks for $G$ were defined and $h:\sN^*\to\R$ is a bounded measurable function.

The proofs of the following results are given in the appendix because they are easier version of our main results (Theorem \ref{theo:exttheta} and Theorem \ref{theo:hydlimit}, respectively).
\begin{proposition}[Proof in Section \ref{proof:prop:mcontinuity}]\label{prop:mcontinuity} The transition kernel $\sM$ is continuous.
\end{proposition}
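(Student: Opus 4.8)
The plan is to prove sequential continuity: if $(G_n,o_n)\to (G,o)$ in $\sG^*$, then $\sM(G_n,o_n)\to\sM(G,o)$ in the weak topology of $\sP(\sN^*)$. Since this topology is metrized by the Bounded-Lipschitz metric $\dist{BL}{\cdot}{\cdot}$ and $\sN^*$ is Polish, it suffices to show $\dist{BL}{\sM(G_n,o_n)}{\sM(G,o)}\to 0$. My strategy is to produce, for each fixed radius $r\in\N$, a coupling of the random networks $N^{G_n}$ and $N^G$ whose rooted isomorphism classes lie within $\frac{1}{1+r}$ of each other with probability one for all large $n$; the estimate \eqnref{boundonBL} then converts this into a bound on $\dist{BL}{\sM(G_n,o_n)}{\sM(G,o)}$.

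First I would unpack what convergence in $\sG^*$ means at the level of balls: $(G_n,o_n)\to(G,o)$ is equivalent to the statement that for every $r\in\N$ there is $n_0(r)$ with the property that for all $n\ge n_0(r)$ the $r$-balls $(G_n,o_n)_r$ and $(G,o)_r$ are rooted-isomorphic. For such $n$ I fix a rooted isomorphism $\Psi_n\colon (G,o)_r\to(G_n,o_n)_r$. Next I would build the coupling on a single probability space: sample i.i.d. marks from the fixed laws $\pi,\lambda,\mu$ and decorate $G$ to obtain $N^G$ with law $\sM(G,o)$; then decorate $G_n$ by transporting the marks of $G$ across $\Psi_n$ inside the $r$-ball—assigning to each vertex (resp. edge) $\Psi_n(v)$ the mark already drawn for $v$—and sampling fresh independent marks for every vertex and edge of $G_n$ outside the ball.

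The one point that genuinely needs care, and which I expect to be the only mild obstacle, is checking that this transported-marks construction reproduces the correct marginal law $\sM(G_n,o_n)$. This holds because the marks are i.i.d. across vertices and edges and $\Psi_n$ merely permutes coordinates inside the ball: the push-forward of the product mark law under $\Psi_n$ is again the product law, and independence across the ball boundary is preserved since the interior marks and the freshly sampled exterior marks are independent. Hence $N^{G_n}$ built this way has law $\sM(G_n,o_n)$, so the coupling is valid.

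Finally I would read off the distance. Under the coupling, $N^{G_n}$ and $N^G$ agree as rooted networks on the entire $r$-ball—the graphs are isomorphic via $\Psi_n$ and the corresponding edge and vertex marks are equal—so the pair $(r,0)$ is good for $(N^{G_n},o_n),(N^G,o)$, giving $\dist{\sN^*}{[N^{G_n},o_n]}{[N^G,o]}\le \frac{1}{1+r}$ almost surely. Since $\frac{1}{1+r}\le 2$, applying \eqnref{boundonBL} yields $\dist{BL}{\sM(G_n,o_n)}{\sM(G,o)}\le \frac{1}{1+r}$ for every $n\ge n_0(r)$, whence $\limsup_{n}\dist{BL}{\sM(G_n,o_n)}{\sM(G,o)}\le \frac{1}{1+r}$. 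Letting $r\to\infty$ completes the argument. The whole proof is essentially forced by the definition of the local metric, with the coupling's law-matching being the only step requiring attention.
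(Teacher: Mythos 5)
Your proof is correct and follows essentially the same route as the paper's: unpack local convergence into eventual rooted isomorphism of $r$-balls, couple the i.i.d.\ marks to agree across that isomorphism, conclude $\dist{\sN^*}{[N^{G_n},o_n]}{[N^G,o]}\leq \frac{1}{1+r}$ almost surely, and pass to the laws via \eqnref{boundonBL} before letting $r\to\infty$. The only difference is that you spell out the law-matching check for the transported-marks coupling, which the paper leaves implicit in the phrase ``we can couple the marks to be equal.''
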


With this result at hand, for any probability measure $\rho$ over $\sG^*$ we can define $\rho\sM\in\sP(\sN^*)$ via the formula
\[\rho\sM(h)=\int\sM(G,o)(h){\rm d}\rho\]
for any bounded measurable function $h:\sN^*\to\R.$

\begin{theorem}[Proof in Section \ref{proof:theo:lwlexamples}]\label{theo:lwlexamples} Consider a sequence $\{G_n\}_{n\in\N}\subset \sG$ of finite graphs where each $G_n$ has vertex set $[n]$. Assume also the following
\begin{enumerate}
\item {\em Local weak convergence:} $\{G_n\}_n$ is locally weakly convergent to a probability measure $\rho$ (in the sense of Definition \ref{def:lwlgraphs}).  
\item {\em Small maximum degree:} for each $n$, $\max_{v\in [n]}d_v^{G_n}=n^{\eps_n}$ where $\eps_n\to 0$ as $n\to +\infty$.
\end{enumerate}
Then the sequence of random networks $\{N_n\}_n\subset \sN$ defined by $N_n:=N^{G_n}$, containing the random marks, converges to $\rho\sM$ in the local weak sense and almost surely with respect to the law of the random marks.\end{theorem}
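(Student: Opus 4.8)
Since the graphs $G_n$ are deterministic, the only randomness is in the marks, and the claim is an almost-sure statement with respect to them. My plan is to verify Definition~\ref{def:localweak} directly. Because $\sN^*$ is Polish, the weak topology on $\sP(\sN^*)$ is metrized by ${\rm d}_{BL}$ and is generated by a fixed countable family $\{h_k\}_{k\in\N}$ of functions with $\pnorm{BL}{h_k}\le 1$ that is convergence-determining. Hence it suffices to show that, for each fixed $h$ with $\pnorm{BL}{h}\le 1$, one has $U(N_n)(h)\to(\rho\sM)(h)$ almost surely over the marks, and then to intersect the countably many resulting full-measure events (indexed by the $h_k$). Throughout I write $\tilEx{\cdot}$ for the expectation over the marks and $h_v:=h([N^{G_n}(v),v])$.

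The first (annealed) step isolates the role of Proposition~\ref{prop:mcontinuity}. The marks on the connected component of $v$ are precisely the randomness defining $\sM([G_n(v),v])$, so $\tilEx{h_v}=\sM([G_n(v),v])(h)$ and therefore
\[\tilEx{U(N_n)(h)}=\frac1n\sum_{v\in[n]}\sM([G_n(v),v])(h)=U(G_n)(g_h),\qquad g_h(G,o):=\sM(G,o)(h).\]
By \propref{mcontinuity} the map $g_h:\sG^*\to\R$ is continuous, and it is bounded by $\pnorm{\infty}{h}$; hence the hypothesis $U(G_n)\to\rho$ gives $\tilEx{U(N_n)(h)}=U(G_n)(g_h)\to\rho(g_h)=(\rho\sM)(h)$.

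It remains to kill the fluctuation $U(N_n)(h)-\tilEx{U(N_n)(h)}$, and this is where the only genuine difficulty lies: on a graph with a macroscopic component the summands $h_v$ are strongly correlated, so a naive variance bound (controlled by $n^{-2}\sum_{\text{comp.}}|C|^2$) fails. The remedy is locality. Given $r$, set $\hat h_v:=h([(N^{G_n}(v))_r,v])$, the evaluation of $h$ on the radius-$r$ truncation. Since $(N^{G_n}(v))_r$ and $N^{G_n}(v)$ share the same rooted ball of radius $r$ \emph{together with its marks}, the pair $(r,0)$ is good in the sense of \eqref{def:distN}, so their ${\rm d}_{\sN^*}$-distance is at most $1/(1+r)$ and thus $|h_v-\hat h_v|\le \pnorm{Lip}{h}/(1+r)\le 1/(1+r)$. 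Consequently both $U(N_n)(h)$ and its mean differ from $F:=\frac1n\sum_v\hat h_v$ and $\tilEx{F}$ by at most $1/(1+r)$. The crucial gain is that $\hat h_v$ depends on the marks only through the finitely many vertices and edges inside the ball $B_r(v)$.

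Finally I would apply the bounded-differences (McDiarmid) inequality to $F$, viewed as a function of the independent marks. Changing one mark sitting at a vertex $w$ (or on an edge incident to $w$) alters $\hat h_v$ only for $v\in B_r(w)$, and each such term changes by at most $2\pnorm{\infty}{h}/n\le 2/n$; writing $\Delta:=\max_v d_v^{G_n}=n^{\eps_n}$ and $|B_r(w)|\le (r+1)\Delta^r$, every mark has influence $O_r(\Delta^r/n)$. Summing squared influences over the $O(n\Delta)$ marks gives $\sum_i c_i^2=O_r(\Delta^{2r+1}/n)$, so
\[\Pr{|F-\tilEx{F}|>t}\le 2\exp\Big(-c_r\,t^2\,n/\Delta^{2r+1}\Big)=2\exp\Big(-c_r\,t^2\,n^{1-\eps_n(2r+1)}\Big).\]
For fixed $r$ and $t$, the hypothesis $\eps_n\to0$ forces the exponent to grow at least like $n^{1/2}$ eventually, so these probabilities are summable and Borel--Cantelli yields $F-\tilEx{F}\to0$ almost surely. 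Combining with the two $1/(1+r)$ truncation bounds and the annealed limit gives $\limsup_n|U(N_n)(h)-(\rho\sM)(h)|\le 2/(1+r)$ almost surely; intersecting over $r\in\N$ removes the error and yields the claim for the fixed $h$, and intersecting over $\{h_k\}$ gives the asserted almost-sure local weak convergence. The main obstacle is precisely this concentration step, and the sub-polynomial degree bound $\Delta=n^{o(1)}$ is exactly what lets the estimate survive the exponential-in-$r$ growth of neighbourhoods.
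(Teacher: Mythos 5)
Your proposal is correct and follows essentially the same route as the paper's own proof: the annealed limit via continuity of the kernel $\sM$ (Proposition \ref{prop:mcontinuity}), truncation to radius-$r$ neighbourhoods with the $1/(1+r)$ local-metric error, and then Azuma/McDiarmid bounded differences plus Borel--Cantelli, using the $n^{\eps_n}$ degree bound to control ball sizes and the number of edge marks. The only (welcome) difference is that you spell out the reduction to a countable convergence-determining family of BL test functions, which the paper leaves implicit.
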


\section{Interacting diffusions on finite networks}
\label{sec:idg}
In this section, we introduce our main objects of interest, and give formal versions of Theorem \ref{thm:loose1} and Theorem \ref{thm:loose2}.
\ignore{
Let $G=(V,E)$ be a graph. Let $\vecg{\mu}$ be a matrix of weights for $G$. We want to introduce a collection of variables indexed by the vertices of $G$ that we will call interacting diffusions on $G$. For this we need the environment or ``media" variables $\vecg{\omega}=(\omega_v)_{v\in V} \in \R^V$. We also need a vector of initial conditions $\vecg{\theta}(0)=(\theta_v(0))_{v\in V}\in\R^V$. With these marks we define the network $N=(G,\vecg{\mu},\vecg{\omega},\vecg{\theta}(0)).$ Observe that the mark space of $N$ is $\R_{+}\times \R^2.$
\begin{definition} We write $\sN$ for the collection of networks $N=(G,\vecg{\mu},\vecg{\omega},\vecg{\theta}(0))$ with mark space $\R_{+}\times \R^2$ with the additional condition that $(G,\vecg{\mu})$ is a weighted graph. When we distinguish a root for $N$ we write $\sN^*$ for the collection of rooted networks up to isomorphism.  
\end{definition} 
}

Fix functions $\psi:\R^2 \to \R$ and $\phi : \R^4 \to \R$. In our model, $\psi$ represents a drift term that depends on the current position and the media variable, whereas $\phi$ corresponds to pairwise interactions that represent single-term drift and the interaction between the particles. As we can always suppose $V\subset \N$ we consider $(B_v)_{v \in \N}$ a collection of i.i.d. Standard Brownian Motions.

\begin{definition} \label{def:supersparseID} Let $N=(G,\vecg{\mu},\vecg{\omega},\vecg{\theta}(0))$ be a network belonging to the set $\sN$ in Definition \ref{def:spaceofnetworks}. A system of interacting diffusions on the network $N$ (with the choice of functions $\psi,\phi$) is a random vector \[\vecg{\theta}(\cdot)=(\theta_{v}(\cdot))_{v \in V}\in C([0,T];\R)^{ |V|}\] which is a strong solution of the following system of  It\^{o} Stochastic Differential Equations (SDEs):  for each $v \in V$, 
\begin{eqnarray} \label{eq:sde} 
{\rm d}\theta_{v}(t)&=& \frac{1}{\mu_v}\sum_{u \in V}\mu_{uv}\phi(\theta_{u}(t),\theta_{v}(t);\omega_v,\omega_u){\rm d}t+\psi(\theta_{v}(t);\omega_v){\rm d}t + {\rm d}B_v(t),
\end{eqnarray}
where the first term in the RHS in (\ref{eq:sde}) is zero if $\mu_v=0,$ and the initial conditions are given by $\vecg{\theta}(0)=(\theta_v(0))_{v\in V}$. When we need to make explicit the dependency on the network we will write $\vecg{\theta}(\cdot)=:\vecg{\theta}^N(\cdot)$.\end{definition}

When the network $N$ is finite the standard theory of It\^{o} SDEs guarantees that the system  (\ref{eq:sde}) has a unique strong solution with continuous trajectories whenever $\psi$ and $\phi$ are Lipschitz-continuous (see \cite{karatzas2012brownian}, Theorem 2.9, Chapter 5). In the remainder of this work, $\psi$ is assumed to be Lipschitz-continuous and $\phi$ is assumed to be Lipschitz-continuous and bounded. We will argue that, under certain conditions, we also can solve (\ref{eq:sde}) simultaneously with infinite equations (on infinite networks).

For each network $N$, if there exists a system of interacting diffusions over $N$, we can replace the initial conditions $\vecg{\theta(0)}$ by the vector of random continuous functions $\vecg{\theta}^{N}(\cdot)\in C([0,T];\R)$ as new marks. This brings us to the following definitions. 

\begin{definition}\label{def:newmarks} Given a network $N=(G,\vecg{\mu},\vecg{\omega},\vecg{\theta(0)})\in\sN$ and a vector of continuous functions \[\vecg{\alpha}(\cdot)=(\alpha_v(\cdot))_{v\in G}\in C([0,T];\R)^{|G|}\] we write $\sC$ for the collection of networks $(G,\vecg{\mu},\vecg{\omega},\vecg{\alpha}(\cdot))$ with continuous functions replacing the vector $\vecg{\theta(0)}$ as new marks. That is, $\sC$ has edge marks in $\R_+$ and vertex marks in $\R\times C([0,T];\R).$ When we distinguish a root for $(G,\vecg{\mu},\vecg{\omega},\vecg{\alpha}(\cdot))$ we write $\sC^{*}$ for the collection of these rooted networks up to isomorphism.\end{definition}

\begin{definition}\label{def:Ntheta} When $N\in \sN$ is such that a system of interacting diffusions $\vecg{\theta}^N(\cdot)$ can be defined, we let $N^{\theta}=(G,\vecg{\mu},\vecg{\omega},\vecg{\theta}^{N}(\cdot))$ be the corresponding random element of $\sC$ defined above. Note that the law of $N^{\theta}$ is invariant by network isomorphisms.\end{definition}

To present our main results, we need some additional definitions. We first restrict ourselves to the space of finite networks.  

\begin{definition} Let $\sN_f$ be the collection of finite networks contained in $\sN$. We write $\sN_f^*$ when considering finite rooted networks of $\sN^*$ up to isomorphism. 
\end{definition}

Finally, we define a map associating to each finite network $[N,o]$ the law of $[N^{\theta},o]$.

\begin{definition}\label{def:Theta}For each rooted finite network $[N,o]\in \sN^*_f$, we let $\Theta([N,o])\in \sP(\sC^*)$ denote the law of the random network $[N^{\theta},o]$ (up to isomorphism). This is well-defined because the law of $N^{\theta}$ is invariant under isomorphisms of $N$. Therefore, $\Theta$ defines a map from $\sN^*_f$ (the space of finite networks) to $\sP(\sC^*)$ (the space of probability measures over $\sC^*$).\end{definition} 

\ignore{Our goals consist in the following. First, we show that $\Theta$ can be extended to a continuous function defined in a closed subset of infinite networks in $\sN^*$.
After that we will consider a sequence $N_n\in\sN_f$ such that the empirical neighborhood distribution $U(N_n)$ converges to $\nu$ in $\sP(\sN^*).$ We will investigate the limit behaviour of the empirical neighborhood distribution  $U(N_n^{(\theta)})$ of the networks obtained by adding the solutions of (\ref{eq:sde}) as new marks. Observe that $U(N_n^{(\theta)})$ is a random element in $\sP(\sC^*).$}

\subsection{Main results}
\label{sec:vsparse:main}
In this section we state our main results. Our results hold when the sequence $\{N_n\}_{n\in\N}$ converges to something ``nice".

\begin{definition}[``Nice" networks]\label{def:goodnet} For each rooted network $(N,o)\in\sN^*$ define $\mu_*(N)$ (resp. $\mu^*(N)$) as the infimum (resp. supremum) of the set  $\{\mu_{e}:e\in E_N\}.$ Let $(G,o)$ be the underlying rooted graph of $(N,o).$ Call $(N,o)$ {\em nice} if:
\begin{enumerate}
\item {\em Graph grows at most exponentially:} there exists $a>0$ such that \[|\partial (G,o)_r|\leq ae^{ar} \mbox{ for all }r\geq 1; \mbox{ and}\]
\item  {\em Weights bounded away from $0$ and $+\infty$:} $0<\mu_*(N)\leq \mu^*(N)<+\infty.$
\end{enumerate}  
We let $\sB^*\subset \sN^*$ denote the set of nice rooted networks (this is a Borel set). Observe that $\sN_f^*\subset\sB^*.$ \end{definition}

See Section \ref{sec:goodexamples} for examples of nice networks.

Our first main result ensures that systems of interacting diffusions may be defined over nice networks $(N,o)$ as limits, taking $r\to\infty$, of systems over the finite networks $(N,o)_r$. 

\begin{theorem}[Extension of $\Theta$, Proof in Section \ref{sec:conttheta}]\label{theo:exttheta} We can extend $\Theta$  to a continuous transition kernel $\ovl{\Theta}:\sB^*\to\sP(\sC^{*})$. More specifically, given $(N,o)\in \sB^*$ there exists the unique strong solution of the (possibly infinite) system of SDEs in (\ref{eq:sde}) defined over $N$. Furthermore,  $\ovl{\Theta}(N,o)$ is the law of $(N^\theta,o)\in\sC^*$ defined replacing the initial conditions $\vecg{\theta(0)}$ by $\vecg{\theta}^N(\cdot)$.
\end{theorem}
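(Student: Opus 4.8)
The plan is to build the solution over an infinite nice network $(N,o)\in\sB^*$ as a limit of solutions over the finite truncations $(N,o)_r$, and to control that limit with the locality estimate of Lemma \ref{carne:thecorollary}. For each $r$ the ball $(N,o)_r$ is a finite network, so the standard It\^{o} theory gives a unique strong solution $\vecg{\theta}^{(r)}(\cdot)$ of (\ref{eq:sde}) restricted to it (setting $\mu_{uv}=0$ when $u$ leaves the ball), driven by the same Brownian motions $(B_v)$ and carrying the same $(\vecg{\mu},\vecg{\omega},\vecg{\theta(0)})$ marks. First I would compare $\vecg{\theta}^{(r)}$ and $\vecg{\theta}^{(r')}$ for $r'>r$. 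Setting $\delta_v(\cdot)=\theta^{(r')}_v(\cdot)-\theta^{(r)}_v(\cdot)$, the Brownian terms cancel, and since $\phi,\psi$ are Lipschitz the resulting integral equation is linear in $\delta$ with a single inhomogeneous source: for an interior vertex of $(N,o)_r$ both systems sum over the same neighbours (a pure Lipschitz coupling), whereas for $v\in\partial(N,o)_r$ the $r'$-system has extra interaction terms, each bounded by $\mu_{uv}\supnorm{\phi}/\mu_v$, so the source is supported on $\partial(N,o)_r$ and bounded by $\supnorm{\phi}$. This is exactly the input the locality lemma is designed to digest.

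Iterating this linear inequality (Duhamel/Gronwall) yields a bound of the shape $\Ex{\sup_{t\le T}|\delta_o(t)|}\le T\,\supnorm{\phi}\sum_{v\in\partial(N,o)_r}\sup_{s\le T}(e^{sA})_{ov}$, where $A$ is the weighted-adjacency-type operator with off-diagonal entries $L_\phi\,\mu_{uv}/\mu_v$ (the transition kernel of the random walk reversible w.r.t. $\mu_v$) plus a bounded diagonal from the local Lipschitz terms. Lemma \ref{carne:thecorollary} identifies this matrix exponential with a random-walk heat kernel and, via the Carne--Varopoulos bound, shows each entry decays like $\exp(-\mathrm{dist}(o,v)^2/(CT))$. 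Because $(N,o)$ is nice, $|\partial(N,o)_r|\le ae^{ar}$, and the Gaussian decay dominates this exponential volume growth, so the sum $\to 0$ as $r\to\infty$. Hence $(\vecg{\theta}^{(r)})_r$ is Cauchy at every vertex (uniformly on $[0,T]$), and the limit $\vecg{\theta}^N(\cdot)$ solves the full infinite system by passing to the limit in the integral form, using continuity of $\phi,\psi$ together with the absolute bound $\frac{1}{\mu_v}\sum_u\mu_{uv}\phi(\cdots)\le\supnorm{\phi}$. Uniqueness is the same computation with no source: the difference of two solutions (in the natural class, where $\sup_{t\le T}|\delta_v|$ grows sub-Gaussianly in $\mathrm{dist}(o,v)$, which legitimate solutions satisfy since the $\phi$-drift is bounded) obeys the homogeneous inequality, which the Carne--Varopoulos estimate forces to vanish.

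For continuity of $\ovl{\Theta}:\sB^*\to\sP(\sC^*)$, suppose $(N_k,o_k)\to(N,o)$ in the local metric. By the definition of $\dist{\sN^*}{\cdot}{\cdot}$, for each large $r$ there is, for $k$ large, a rooted isomorphism $\Psi_k$ between $(N_k,o_k)_r$ and $(N,o)_r$ matching the weights and the $(\omega,\theta(0))$ marks up to some $\delta_k\to 0$. I would couple the two systems by identifying their driving Brownian motions through $\Psi_k$ on this ball. On a smaller ball of radius $r'\ll r$ the difference of the two solutions is then controlled by two contributions: a Lipschitz term proportional to $\delta_k$ from the small mark mismatch inside the ball, and a far-field term from the structural mismatch beyond radius $r$, which Lemma \ref{carne:thecorollary} again bounds by the Carne--Varopoulos quantity tending to $0$ as $r\to\infty$. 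Choosing first $r'$ with $1/(1+r')<\eps/2$, then $r$ large, then $k$ large makes the trajectory marks $\eps$-close on the radius-$r'$ ball outside a small-probability event, so $\dist{\sC^*}{[N_k^\theta,o_k]}{[N^\theta,o]}<\eps$ with high probability. Feeding this coupling into (\ref{eq:boundonBL}) yields $\dist{BL}{\ovl{\Theta}(N_k,o_k)}{\ovl{\Theta}(N,o)}\to 0$, which is the asserted weak convergence.

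Granting Lemma \ref{carne:thecorollary}, the genuinely delicate points are bookkeeping rather than conceptual. The first is isolating the boundary source in the truncation comparison so that it is supported exactly on $\partial(N,o)_r$ and has size $O(\supnorm{\phi})$; the second, which I expect to be the main obstacle, is the continuity argument, where one must juggle two error sources simultaneously --- the mark mismatch (absorbed by Lipschitz continuity of $\phi,\psi$) and the far-field structural mismatch (absorbed by locality) --- while keeping the Brownian coupling consistent across $\Psi_k$ and getting the order of quantifiers ($r'$, then $r$, then $k$) right. The heart of the matter, namely defeating exponential volume growth with Gaussian heat-kernel decay, is precisely what Lemma \ref{carne:thecorollary} provides, so everything here reduces to the quantifier management above.
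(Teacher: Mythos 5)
Your proposal is correct and follows essentially the same route as the paper: existence via solutions on the finite balls $(N,o)_r$ made Cauchy by Lemma \ref{carne:thecorollary} together with the $ae^{ar}$ boundary-growth bound for nice networks, uniqueness by applying the same locality estimate to any competing strong solution, and continuity by splitting the error into a finite-network mark-mismatch term (handled in the paper by Lemma \ref{le:samegraph}) plus truncation terms killed by locality, with the same order of limits. The only cosmetic discrepancy is that you quote the Carne--Varopoulos decay as Gaussian, $\expp{-\mathrm{dist}^2/(CT)}$, whereas Lemma \ref{carne:thecorollary} actually delivers $C|\partial H|\expp{-r\log r}$; both rates dominate exponential volume growth, so the argument is unaffected.
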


In our next result, we use Theorem \ref{theo:exttheta} to identify  hydrodynamic limits of systems of interacting diffusions. We first note that, if $\nu$ is a probability measure over $\sB^*,$ the continuity (in particular measurability) of $\ovl{\Theta}$ allows us to define $\nu\ovl{\Theta}$ by the formula:
\begin{align}\label{eq:transition}
\nu\ovl{\Theta}(h)=\int \ovl{\Theta}([N,o])(h)\,d\nu([N,o])=\int \Ex{h(N^\theta,o)}d\nu([N,o]),
\end{align}
where $h:\sC^*\to \R$ is a bounded measurable function and the expectation $\Ex{\,\cdot\,}$ is with respect to the Brownian motions.

\begin{theorem}[Hydrodynamic Limit, Proof in Section \ref{sec:proof:theo:hydlimit}]\label{theo:hydlimit} Consider a sequence \[\{N_n\}_{n\in\N}\subset \sN_f\] of finite networks where each $N_n$ has vertex set $[n]$. Make the following additional assumptions.
\begin{enumerate}
\item\label{item:lwc} {\em Local weak convergence:} $\{N_n\}_{n\in\N}$ is locally weakly convergent to a probability measure $\nu$ (in the sense of Definition \ref{def:localweak}). 
\item\label{item:nice} {\em Good limiting network:} the measure $\nu$ is supported on nice networks (cf. Definition \ref{def:goodnet}). 
\item\label{item:degree} {\em Small maximum degree:} the largest degree in $N_n$ satisfies $\max_{v\in [n]}d_v^{N_n}=n^{\eps_n}$ where $\eps_n\to 0$ as $n\to +\infty$.
\end{enumerate}

Then the sequence of random networks $\{N_n^{\theta}\}_{n\in\N}\subset \sC$ containing the random trajectories of the interacting diffusions converges to $\nu\ovl{\Theta}\in\sP(\sC^*)$ in the local weak sense and almost surely with respect to the law of the Brownian motions.\end{theorem}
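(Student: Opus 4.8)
The plan is to test the random empirical measures $U(N_n^{\theta})$ against a countable convergence-determining family of bounded Lipschitz functions $h:\sC^*\to\R$, and for each such $h$ to establish $U(N_n^{\theta})(h)\to\nu\ovl{\Theta}(h)$ almost surely; intersecting the countably many almost-sure events then yields almost-sure local weak convergence. Writing $g([N,o]):=\ovl{\Theta}([N,o])(h)=\Ex{h(N^{\theta},o)}$ for the expectation over the Brownian motions, the key identity is that the conditional mean of the empirical measure factors through $g$: since each $N_n$ is finite, $\Ex{U(N_n^{\theta})(h)}=\frac1n\sum_{v}g([N_n(v),v])=U(N_n)(g)$. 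This splits the problem into a bias term $U(N_n)(g)-\nu(g)$ and a fluctuation term $U(N_n^{\theta})(h)-\Ex{U(N_n^{\theta})(h)}$.

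For the bias term I would invoke Theorem~\ref{theo:exttheta}: since $\ovl{\Theta}$ is a continuous transition kernel on the nice networks $\sB^*$ and $h$ is bounded and continuous, $g$ is bounded and continuous on $\sB^*$. Assumption~\ref{item:nice} gives $\nu(\sB^*)=1$, so $g$ is continuous $\nu$-almost everywhere, while Assumption~\ref{item:lwc} gives $U(N_n)\to\nu$ weakly; the (generalized) continuous-mapping theorem then yields $U(N_n)(g)\to\nu(g)=\nu\ovl{\Theta}(h)$. The only subtlety here is the measure-theoretic check that the $\nu$-null discontinuity set of $g$ does not obstruct passing to the limit, which is routine given $\nu(\sB^*)=1$.

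The fluctuation term is where the Locality Lemma (Lemma~\ref{carne:thecorollary}) does the real work, and I expect upgrading convergence in probability to almost-sure convergence to be the main obstacle. Since $h([N_n^{\theta}(v),v])$ is Lipschitz for the local metric, it depends on the trajectory $\theta_v^{N_n}(\cdot)$ only through a radius-$r$ neighbourhood up to an error $\tfrac{1}{1+r}+\delta$; Lemma~\ref{carne:thecorollary} then lets me replace $\theta_v^{N_n}(\cdot)$ by the solution of the system restricted to the ball $(N_n,v)_r$, with a sup-norm error whose expectation tends to $0$ as $r\to\infty$, uniformly in $n$ (this is exactly where the Carne–Varopoulos/heat-kernel decay and the at-most-exponential growth enter). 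Because the truncated quantity at $v$ is a function only of the Brownian motions inside the ball, two such quantities at vertices $v,w$ with ${\rm dist}(v,w)>2r$ are independent. Together with Assumption~\ref{item:degree}, which bounds the ball size by $n^{o(1)}$ for fixed $r$, a second-moment computation gives $\Var{U(N_n^{\theta})(h)}\le \tfrac{C\,n^{o(1)}}{n}+\varepsilon(r)$, and letting first $n\to\infty$ and then $r\to\infty$ drives the variance to $0$, giving convergence in probability.

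To obtain the almost-sure statement I would exploit that $U(N_n^{\theta})(h)$ is a functional of the independent Brownian paths that is Lipschitz in the Cameron–Martin norm with constant $L_n$ satisfying $L_n^2=O(n^{-1+o(1)})$ — again a consequence of the locality estimate, since the sensitivity of $\theta_w^{N_n}$ to the driving noise at $u$ decays with ${\rm dist}(u,w)$ and sums to $O(1)$. Gaussian concentration on Wiener space then gives $\Pr{\big|U(N_n^{\theta})(h)-\Ex{U(N_n^{\theta})(h)}\big|>t}\le 2\exp(-t^2/(2L_n^2))$, whose right-hand side is summable in $n$ for every fixed $t>0$, so Borel–Cantelli yields the almost-sure vanishing of the fluctuation term and completes the proof of local weak convergence. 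Finally, for the consequence on the empirical trajectory measures, I note that $L_n=\pi_*U(N_n^{\theta})$, where $\pi:\sC^*\to C([0,T];\R)$ is the continuous map reading off the trajectory at the root; pushing the established weak convergence forward through $\pi$ gives $L_n\to\pi_*(\nu\ovl{\Theta})$, which is precisely the law of $\theta_o^N(\cdot)$ under $(N,o)\sim\nu$.
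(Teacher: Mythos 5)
Your overall skeleton matches the paper's proof in Section \ref{sec:proof:theo:hydlimit}: the same splitting into a bias term (handled via the continuity of $\ovl{\Theta}$ from Theorem \ref{theo:exttheta} plus local weak convergence, using $\Ex{U(N_n^{\theta})(h)}=U(N_n)\ovl{\Theta}(h)$) and a fluctuation term, with a radius-$r$ truncation so that each summand depends only on the Brownian motions in a ball of size $n^{(r+1)\eps_n}$. The genuine gap is in your almost-sure upgrade. You apply Gaussian concentration to the \emph{untruncated} functional $U(N_n^{\theta})(h)$, claiming a Cameron--Martin Lipschitz constant $L_n^2=O(n^{-1+o(1)})$ on the grounds that the sensitivity of $\theta^{N_n}_w$ to the noise at $u$ decays superexponentially in ${\rm dist}(u,w)$. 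That inference does not go through: the locality of the \emph{trajectories} is not inherited by the test function. A function $h$ with $\blnorm{h}\leq 1$ for the local metric is not a function of a bounded neighbourhood of the root; the metric only caps its dependence on marks outside $(N,o)_r$ by $\min(\eta,\tfrac{1}{1+r})$ when those marks move by $\eta$, which is no gain at all for vertices within distance $\sim 1/\eta$ of the root (for instance, $h=2\wedge\dist{\sC^*}{\cdot}{[N_{\rm ref},o_{\rm ref}]}$ can have sensitivity exactly $1$ to such a perturbation). So a single path $B_u$ influences $U(N_n^{\theta})(h)$ through \emph{every} vertex $v$ with ${\rm dist}(u,v)\lesssim 1/\eta$, and since Assumption \ref{item:degree} only gives $\max_v d_v^{N_n}=n^{\eps_n}$ with $\eps_n\to 0$ arbitrarily slowly, a ball of radius $1/\eps_n$ can contain $\Omega(n)$ vertices. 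Taking $\eta\asymp\eps_n$, the Lipschitz ratio of the untruncated functional can be of order one, not $n^{-1/2+o(1)}$; the resulting Gaussian tail is then not summable (nor is it even if one only loses down to $L_n\asymp\eps_n$, e.g.\ $\eps_n=1/\log\log n$), so Borel--Cantelli fails and you are left with only the convergence in probability that your variance step already provided.

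The repair is precisely the step you used for the variance but abandoned for the almost-sure statement: apply the concentration inequality to the \emph{truncated} functional $U^{(r)}(N_n^{\theta})(h)$ of (\ref{eq:rneigh}) with $r$ fixed. There, changing one path $B_w$ alters the average by at most $|(N_n(w),w)_r|/n\leq n^{(r+1)\eps_n-1}$, so Azuma's inequality for functions of independent path-valued variables (Theorem \ref{thm:azuma}; no Wiener-space machinery is needed) gives tails $2\exp\left(-2t^2n^{1-2(r+1)\eps_n}\right)$, which \emph{are} summable for each fixed $r$, yielding Lemma \ref{le:concentration}; the truncation is removed only afterwards. On that de-truncation step, note also that your claim of an error ``uniform in $n$'' is imprecise: the bound from Lemma \ref{carne:thecorollary} involves $|\partial(N_n(v),v)_r|$, which is not uniformly controlled; the paper splits on the event $|\partial(N_n,v)_r|\leq ae^{ar}$, uses local weak convergence and $\nu(\sB^*_a)\to 1$ (cf.\ (\ref{eq:morenice})--(\ref{eq:distruncated2})), and takes $n\to\infty$, then $r,s\to\infty$, then $a\to\infty$, in that order. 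Your closing projection argument for $L_n$ is fine and is exactly Corollary \ref{cor:omitted}.
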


\begin{remark}\label{rem:nicenet} We show in Section \ref{sec:goodexamples} that taking $(G_n)_{n\in\N}$ from the examples \ref{exe:cycle},  \ref{exe:er} and \ref{exe:degree} of almost surely (with respect to the graph randomness) local weak convergence we can construct networks $N_n$ that satisfy the assumptions of Theorem \ref{theo:hydlimit} for almost all realizations of marks and almost all realizations of the sequence $G_n$. In particular, the theorem is true for the case of $\ER$ and GW trees with probability $1$ with respect to the law of the $\ER$ graphs.
\end{remark}

The hydrodynamic limit is in general stated for the empirical measure over particle trajectories. 
\[L_n=\frac{1}{n}\sum_{v\in N_n}\delta_{\theta_v^{N_n}(\cdot)}\in \sP(C([0,T];\R)).\]

We can obtain $\theta_o^N$ as a projection of $(N^\theta,o).$  Using that the projection is continuous we obtain the following corollary.
\begin{corollary}[Proof omitted]\label{cor:omitted}In the conditions of Theorem \ref{theo:hydlimit}, $L_n$ converges almost surely to the law of $\theta_o^N$ in the weak topology of $\sP(C([0,T];\R))$.
\end{corollary}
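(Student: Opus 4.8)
The plan is to realize $L_n$ as the image of the empirical neighborhood distribution $U(N_n^\theta)$ under a single continuous ``read off the root trajectory'' map, and then to invoke the continuous mapping theorem for weak convergence together with the local weak convergence already supplied by Theorem \ref{theo:hydlimit}.

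First I would introduce the root-projection map
\[\Pi:\sC^*\to C([0,T];\R),\qquad \Pi\big([(G,\vecg{\mu},\vecg{\omega},\vecg{\alpha}(\cdot)),o]\big):=\alpha_o(\cdot),\]
which returns the trajectory attached to the root. This is well defined on isomorphism classes, since a rooted isomorphism fixes the root and preserves vertex marks, so $\alpha_o(\cdot)$ is independent of the chosen representative. The crucial point is that $\Pi$ is continuous for the local metric on $\sC^*$. Indeed, if two rooted networks are at local distance below $\eps$, then by (\ref{def:distN}) there is a good pair $(r,\delta)$ with $\tfrac{1}{1+r}+\delta<\eps$, hence $\delta<\eps$; because the root already lies in the radius-$0$ ball, the rooted isomorphism matches the two root marks within $\delta$, and in particular the two root trajectories differ by less than $\delta<\eps$ in the $C([0,T];\R)$ component. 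Thus $\Pi$ is continuous (in fact Lipschitz).

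Next I would verify the two identifications that turn the corollary into a formal consequence. On the one hand, pushing $U(N_n^\theta)$ forward through $\Pi$ gives
\[\Pi_*\,U(N_n^\theta)=\frac{1}{n}\sum_{v\in N_n}\delta_{\Pi([N_n^\theta(v),v])}=\frac{1}{n}\sum_{v\in N_n}\delta_{\theta_v^{N_n}(\cdot)}=L_n,\]
where I use that the trajectory of $v$ depends only on its connected component (the drift in (\ref{eq:sde}) involves only neighbors), so the root mark of $[N_n^\theta(v),v]$ is exactly $\theta_v^{N_n}(\cdot)$. On the other hand, comparing with the definition of $\nu\ovl{\Theta}$ in (\ref{eq:transition}), the pushforward $\Pi_*(\nu\ovl{\Theta})$ is precisely the law of $\theta_o^N(\cdot)$ when $(N,o)\sim\nu$ and $\vecg{\theta}^N(\cdot)$ solves (\ref{eq:sde}) as in Theorem \ref{theo:exttheta}.

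Finally I would conclude by the mapping theorem. Theorem \ref{theo:hydlimit} asserts that, almost surely with respect to the Brownian motions, $U(N_n^\theta)\to\nu\ovl{\Theta}$ in the weak topology of $\sP(\sC^*)$; since $\Pi$ is continuous, for every bounded continuous $h$ on $C([0,T];\R)$ the function $h\circ\Pi$ is bounded continuous on $\sC^*$, so $\Pi_*U(N_n^\theta)\to\Pi_*(\nu\ovl{\Theta})$ weakly on the same almost sure event. As the pushforward is a deterministic (pathwise) operation and $L_n=\Pi_*U(N_n^\theta)$, this yields the claimed almost sure convergence of $L_n$ to the law of $\theta_o^N(\cdot)$ in $\sP(C([0,T];\R))$. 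The only step demanding any genuine care is the continuity of $\Pi$, i.e. relating the local metric on $\sC^*$ to the sup-metric on trajectories; but this is immediate from (\ref{def:distN}) once one observes that the root belongs to every radius-$r$ ball. Everything else is bookkeeping plus the standard mapping theorem, which is presumably why the authors leave the proof to the reader.
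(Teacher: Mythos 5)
Your proof is correct and is essentially the paper's own (omitted) argument: the remark immediately preceding Corollary \ref{cor:omitted} says exactly that $\theta_o^N$ is obtained as a projection of $(N^\theta,o)$ and that this projection is continuous, which is your map $\Pi$ plus the continuous mapping theorem. Your supporting checks — continuity of $\Pi$ from (\ref{def:distN}) since the root lies in every ball, the identification $\Pi_* U(N_n^\theta)=L_n$, and $\Pi_*(\nu\ovl{\Theta})$ being the law of $\theta_o^N(\cdot)$ via (\ref{eq:transition}) — are all sound.
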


\begin{remark} To see that the limiting object is not Markovian consider the case when $\nu$ is supported on a deterministic rooted $d-$regular tree $(T,o).$ Then the evolution $\theta_o^T(s)$ in the time interval $t\leq s\leq T$ depends also on the values $\{\theta_v^T(t):v\sim_To\}$ (see (\ref{eq:sde})). Observe however that the full vector $\vecg\theta^N=(\theta_v^N)$ is Markovian. Since it is a strong solution then $\vecg\theta^N(t+s)-\vecg\theta^N(t)$ is independent of $\sF_t=\sigma(B_v(u):0\leq u\leq t,v\in N).$
\end{remark}
As shown in \cite[Propostion~2.2]{Sznitman_Chaos}, the hydrodynamic limit immediately implies the Propagation of Chaos (cf. \cite[Definition~2.1]{Sznitman_Chaos}).
\begin{corollary}[Propagation of Chaos]\label{theo:chaos} Let $k\in \N$ and \[f_1,\cdots,f_k:\sC^*\to\R, \mbox{ such that } \blnorm{f_i}\leq 1, \forall 1\leq i\leq k.\]
Consider $(N_i,o_i)$ i.i.d. with law $\nu$. Under our assumptions,
\[\lim_{n\to\infty}\Ex{\prod_{i=1}^kU(N_n^{\theta})(f_i)}\to\prod_{i=1}^k\Ex{f_i(N_i^\theta,o_i)}.\]
\end{corollary}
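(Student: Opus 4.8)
The plan is to obtain this statement as an immediate consequence of the hydrodynamic limit, Theorem~\ref{theo:hydlimit}, combined with a bounded-convergence argument; this is precisely the abstract equivalence recorded in \cite[Proposition~2.2]{Sznitman_Chaos}, but it is worth spelling out in the present notation. Recall that, under the stated assumptions, Theorem~\ref{theo:hydlimit} asserts that, almost surely with respect to the law of the Brownian motions, the random empirical measures $U(N_n^\theta)\in\sP(\sC^*)$ converge weakly to the deterministic measure $\nu\ovl{\Theta}$. The key point to keep in mind is that $\prod_{i=1}^k U(N_n^\theta)(f_i)$ is a product of $k$ separate scalar averages $U(N_n^\theta)(f_i)=\tfrac1n\sum_{v}f_i([N_n^\theta(v),v])$, rather than a single test function applied to $U(N_n^\theta)$, so the argument should proceed factor by factor.

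First I would note that each $f_i$ satisfies $\supnorm{f_i}\le\blnorm{f_i}\le 1$ and is continuous, hence is a legitimate test function for weak convergence on $\sC^*$. Applying the weak convergence from Theorem~\ref{theo:hydlimit} to each $f_i$ separately yields, on the almost sure event,
\[
U(N_n^\theta)(f_i)\xrightarrow[n\to\infty]{}\nu\ovl{\Theta}(f_i)\qquad(1\le i\le k),
\]
and therefore, taking the product of finitely many convergent scalar sequences,
\[
\prod_{i=1}^k U(N_n^\theta)(f_i)\xrightarrow[n\to\infty]{}\prod_{i=1}^k\nu\ovl{\Theta}(f_i)\qquad\text{almost surely.}
\]

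Next I would pass to expectations. Since $|U(N_n^\theta)(f_i)|\le\supnorm{f_i}\le 1$ for every $n$ and $i$, the products are bounded in absolute value by $1$ uniformly in $n$, so the bounded convergence theorem upgrades the almost sure convergence above to convergence in mean:
\[
\Ex{\prod_{i=1}^k U(N_n^\theta)(f_i)}\xrightarrow[n\to\infty]{}\prod_{i=1}^k\nu\ovl{\Theta}(f_i).
\]
It then remains to identify each limiting factor. By the very definition of $\nu\ovl{\Theta}$ in \eqnref{transition}, we have $\nu\ovl{\Theta}(f_i)=\int\Ex{f_i(N^\theta,o)}\,d\nu([N,o])=\Ex{f_i(N_i^\theta,o_i)}$, where $(N_i,o_i)$ is distributed as $\nu$ (so $\nu$-a.e.\ nice, making $N_i^\theta$ well defined via Theorem~\ref{theo:exttheta}) and the inner expectation is over the driving Brownian motions; this is exactly the right-hand side of the corollary, and combining the displays completes the argument.

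The reassuring feature here is that there is essentially no obstacle: all of the analytic difficulty has already been absorbed into Theorem~\ref{theo:hydlimit} and, through it, into the locality estimate of Lemma~\ref{carne:thecorollary}. The only points requiring care are that the expectation on the left is taken solely over the Brownian motions, so that the uniform bound $\le 1$ genuinely permits the interchange of limit and expectation via bounded convergence, and that the reindexing implicit in writing $\nu\ovl{\Theta}(f_i)=\Ex{f_i(N_i^\theta,o_i)}$ correctly matches the i.i.d.\ samples $(N_i,o_i)\sim\nu$ appearing in the statement.
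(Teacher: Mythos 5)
Your proof is correct and follows essentially the same route as the paper, which simply cites \cite[Proposition~2.2]{Sznitman_Chaos} to deduce the corollary from the hydrodynamic limit (Theorem~\ref{theo:hydlimit}); your argument is just the standard proof behind that citation, spelled out (almost sure convergence of each $U(N_n^\theta)(f_i)$ to $\nu\ovl{\Theta}(f_i)$, bounded convergence, and the identification $\nu\ovl{\Theta}(f_i)=\Ex{f_i(N_i^\theta,o_i)}$ via \eqnref{transition}). No gaps to report.
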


\section{The Locality Lemma}\label{sec:locality}

In this section, we introduce the main technical tool in our proofs, the Locality lemma. It will be present in the proofs of all the results in Section \ref{sec:vsparse:main}.

The Locality lemma basically establishes that our interacting diffusions over a finite subset $H_0$ of vertices are indifferent to parts of the network that are far away from $H_0$.

\begin{lemma}[Locality]\label{carne:thecorollary} Consider a rooted network \[(N,o)=(G,\vecg{\mu},\vecg{\omega},\vecg{\theta}(0),o)\] according to Definition \ref{def:spaceofnetworks}. Let $H_0$ and $H$ be finite subgraphs of $G$ with $H_0\subset H$. Let $\ovl{N}$ be the sub-network induced by $H$ (cf. Section \ref{sec:networks}). Fix functions $\psi,\phi$ that are Lipschitz with $\phi$  bounded. Let $(B_v)_{v\in V}$ be i.i.d. Brownian motions associated with the vertices of $G$.

Following Definition \ref{def:supersparseID}, {\em assume} that we can define a system of interacting diffusions $\vecg{\theta}^N(\cdot)$ over $N$ from the $(B_v)_{v\in V}$ (with $\psi$ and $\phi$ fixed above and in the time interval $[0,T]$). Also build a system $\vecg{\theta}^{\overline{N}}(\cdot)$ over $\overline{N}$ with the same Brownian motions (this works because $\overline{N}$ is finite). Then the following holds: there exist $C,r_0>0$ depending only on $T$, $\pnorm{Lip}{\psi}$ and $\blnorm{\phi}$ such that, almost surely on the Brownian motion randomness, 
\[\mbox{if }r:={\rm dist}(H_0,\partial H)\geq r_0,\mbox{ then }\max_{v \in H_0}\left(\sup_{t\leq T}|\theta_v^N(t)-\theta^{\ovl{N}}_{v}(t)|\right)\leq C|\partial H|\expp{-r\log r}.\]\end{lemma}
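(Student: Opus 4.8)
The plan is to compare the two systems pathwise. Since $\vecg{\theta}^N$ and $\vecg{\theta}^{\ovl{N}}$ are driven by the same Brownian motions and start from the same initial data, the difference $\Delta_v(t):=\theta_v^N(t)-\theta_v^{\ovl{N}}(t)$ has no martingale part: for almost every Brownian path it is an absolutely continuous function of $t$ with $\Delta_v(0)=0$. First I would write the ODE solved by $\Delta_v$ for $v\in H$ and estimate it using that $\psi$ is Lipschitz and $\phi$ is Lipschitz and bounded. For an interior vertex $v$ (all of whose neighbours lie in $H$) the two drifts involve the same neighbour set and the same normalisation $\mu_v$, so subtracting and applying the Lipschitz bounds gives
\[
\frac{d}{dt}|\Delta_v(t)|\ \le\ \pnorm{Lip}{\phi}\sum_{u\in H}\widehat{P}_{vu}\,|\Delta_u(t)|\ +\ \big(\pnorm{Lip}{\phi}+\pnorm{Lip}{\psi}\big)|\Delta_v(t)|,
\]
where $\widehat{P}_{vu}:=\mu_{uv}/\mu_v$ is the transition matrix of the random walk on the weighted network $\ovl{N}$. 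For a boundary vertex $v\in\partial H$ there are two extra effects — the full system sums over neighbours outside $H$, and the normalisations $\mu_v^N$ and $\mu_v^{\ovl{N}}$ differ — but because $\phi$ is bounded each contributes at most a constant multiple of $\supnorm{\phi}\le\blnorm{\phi}$. Collecting terms yields the entrywise vector inequality
\[
\tfrac{d}{dt}|\vecg{\Delta}(t)|\ \le\ M\,|\vecg{\Delta}(t)|\ +\ \vecg{s}(t),\qquad M:=\pnorm{Lip}{\phi}\,\widehat{P}+\big(\pnorm{Lip}{\phi}+\pnorm{Lip}{\psi}\big)I,
\]
where $M$ has nonnegative entries and the source $\vecg{s}(t)$ is supported on $\partial H$ with $|s_v(t)|\le C_0\blnorm{\phi}$.

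Next I would integrate this linear inequality. Since $\vecg{\Delta}(0)=0$ and $M\ge0$ entrywise, comparison with the linear ODE $\dot{\vec{y}}=M\vec{y}+\vecg{s}$ (equivalently, a Gronwall argument for systems) gives $|\Delta_v(t)|\le\big[\int_0^t e^{(t-\tau)M}\vecg{s}(\tau)\,d\tau\big]_v$. Bounding $e^{(t-\tau)M}\le e^{TM}$ entrywise (valid because $M\ge0$ and $t-\tau\le T$) and using the bound on $\vecg{s}$ yields
\[
\max_{v\in H_0}\ \sup_{t\le T}|\Delta_v(t)|\ \le\ C_0\,\blnorm{\phi}\,T\,\max_{v\in H_0}\sum_{w\in\partial H}\big[e^{TM}\big]_{vw}.
\]
This is the ``matrix exponential'' of the proof sketch, and the problem is now reduced to estimating its entries between $H_0$ and $\partial H$.

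Here the random-walk/heat-kernel viewpoint enters. Because $I$ commutes with $\widehat{P}$,
\[
e^{TM}=\expp{(\pnorm{Lip}{\phi}+\pnorm{Lip}{\psi})T}\sum_{k\ge0}\frac{(\pnorm{Lip}{\phi}T)^k}{k!}\,\widehat{P}^{\,k}.
\]
The crucial locality fact is that $\widehat{P}^{\,k}(v,w)=0$ whenever $k<{\rm dist}(v,w)$, and every $w\in\partial H$ is at distance at least $r={\rm dist}(H_0,\partial H)$ from $v\in H_0$, so only the terms with $k\ge r$ survive in $\sum_{w\in\partial H}[e^{TM}]_{vw}$. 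Bounding the surviving series is then a one-variable estimate: the factorial tail satisfies
\[
\sum_{k\ge r}\frac{(\pnorm{Lip}{\phi}T)^k}{k!}\ \le\ \frac{(e\,\pnorm{Lip}{\phi}T)^r}{r^{\,r}}\big(1+o(1)\big)\ =\ \expp{-r\log r+O(r)},
\]
which is at most $\expp{-r\log r}$ as soon as $r\ge r_0$, with $r_0$ depending only on $\pnorm{Lip}{\phi}$ and $T$. To organise the sum over boundary vertices I would use that $\widehat{P}$ is reversible with respect to the total weights $\pi_v=\mu_v$ (indeed $\pi_v\widehat{P}_{vu}=\mu_{uv}=\mu_{vu}=\pi_u\widehat{P}_{uv}$), so the Carne--Varopoulos bound controls each $\widehat{P}^{\,k}(v,w)$ individually; summing these over the at most $|\partial H|$ vertices of $\partial H$ supplies the prefactor and gives $C|\partial H|\expp{-r\log r}$. (Stochasticity of $\widehat{P}$ alone already gives $\sum_{w\in\partial H}\widehat{P}^{\,k}(v,w)\le1$, which shows the weight ratios are inessential and that $C,r_0$ depend only on $T$, $\pnorm{Lip}{\psi}$ and $\blnorm{\phi}$.)

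The main obstacle, demanding the most care, is the bookkeeping at the boundary: writing the drift difference at $v\in\partial H$ so that the genuinely local part is absorbed into the random-walk operator $\widehat{P}$ while the nonlocal part (external neighbours, and the mismatch between $\mu_v^N$ and $\mu_v^{\ovl{N}}$) is isolated as a source term that is both bounded and supported on $\partial H$ — this is exactly where boundedness of $\phi$, and not merely its Lipschitz continuity, is used. A secondary technical point is justifying the passage from the SDEs to a pathwise differential inequality for $|\Delta_v|$ and closing the Gronwall estimate uniformly in $t\le T$, which is routine given that $\Delta_v$ has absolutely continuous paths with $\Delta_v(0)=0$. Extracting precisely the $\expp{-r\log r}$ rate, as opposed to a plain exponential, is then just a matter of reading the factorial decay of the matrix-exponential coefficients against the distance constraint $k\ge r$, for which Carne--Varopoulos is a convenient organising tool.
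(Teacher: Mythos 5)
Your proof is correct, and its skeleton coincides with the paper's: you cancel the Brownian motions pathwise, use boundedness of $\phi$ to turn the mismatch between the two drifts (external neighbours plus the differing normalisations $\mu_v$ versus $\ovl{\mu}_v$) into a source term of size at most a constant times $\blnorm{\phi}$ supported on $\partial H$ --- this is exactly Proposition \ref{prop:carne1} of the paper, stated there in integral rather than differential form, which also sidesteps the a.e.-differentiability of $|\Delta_v|$ --- and then run a linear Gronwall argument that reduces everything to estimating entries of $e^{TM}$, $M=C(\oP+I)$, between $H_0$ and $\partial H$. Where you genuinely depart from the paper is in this final estimate. The paper writes $\expp{Ct(\oP-I)}_{uv}=\ovl{\mu}_u\,q_{Ct}(u,v)$, recognizes $q$ as the heat kernel of a continuous-time random walk reversible with respect to $\ovl{\vecg{\mu}}$, and invokes the Carne--Varopoulos bound (Theorem 5.17 of \cite{barlow2017random}). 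You instead expand $e^{TM}$ as a power series in $\oP$, use the finite-propagation identity $(\oP^{k})_{vw}=0$ for $k<{\rm dist}(v,w)$ so that only terms $k\geq r$ survive, and bound the factorial tail $\sum_{k\geq r}(\pnorm{Lip}{\phi}T)^k/k!$ directly. Your route is more elementary and self-contained: it needs neither reversibility nor heat-kernel theory (your appeal to Carne--Varopoulos is decorative; the series argument already closes the proof), and your observation that row-stochasticity gives $\sum_{w\in\partial H}(\oP^{k})_{vw}\leq 1$ shows the prefactor $|\partial H|$ in the conclusion is actually unnecessary, a slightly stronger statement than the lemma's. One caveat, which you share with the paper's own last line: the tail is $\expp{-r\log\left(r/(e\pnorm{Lip}{\phi}T)\right)}=\expp{-r\log r+O(r)}$, which is not literally bounded by $C\expp{-r\log r}$ for large $r$ unless $e\pnorm{Lip}{\phi}T\leq 1$; both your argument and the paper's should either keep the exponent in the form $-r\log(r/\mathrm{const})$ or weaken it to, say, $-\tfrac{1}{2}r\log r$ for $r\geq r_0$ with $r_0$ depending on $\pnorm{Lip}{\phi}$ and $T$. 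Since every downstream application of the lemma uses only superexponential decay, this shared imprecision is cosmetic in both proofs.
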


The rest of the section will be devoted to the proof of Lemma \ref{carne:thecorollary}. However, if the reader so wishes, she or he can skip the proof and go directly to Section \ref{sec:conttheta}. In this section the only randomness comes from the diffusions. So every ``almost surely" statement in this section is with respect to the law of the Brownian motions.

The general idea of the proof is the following: In \S \ref{sec:prelimlocality} we rewrite the diffusions $\vecg{\theta}^N(\cdot)$ and $\vecg{\theta}^{\overline{N}}(\cdot)$ so that they can be compared easily. In \S \ref{sec:gronwalllocality}, we prove a linear Gronwall inequality for the difference between the two systems. The proof is finished in \S \ref{sec:finishlocality}. At this step, we need to analyze a certain matrix exponential. We do so via the theory of continuous-time random walks, most importantly Carne-Varoupoulos heat kernel bound \cite[Section~5.1]{barlow2017random}. 

As noted in the introduction, Lacker et al. \cite{Kavita1,Kavita2} essentially bypass locality estimates in their proofs. We strongly believe that the same locality estimates can be proven in their framework.

\subsection{Preliminaries}\label{sec:prelimlocality}

To avoid cumbersome notation, we adopt the following notation conventions. Objects related to the network $N=(G,\vecg{\mu},\vecg{\omega},\vecg{\theta(0)})$ are written without superscripts. Degrees of vertices are indicated via $d_v$.  We define a matrix $P$ indexed by the vertices $V$ of $N$ via: 
\[P_{uv}:= \frac{\mu_{uv}}{\mu_v}\, \Ind{\{\mu_v>0\}}.\]
With this notation, we are assuming the existence of the interacting diffusions over $N$ that may be written as 
\begin{eqnarray}\label{eq:exptheta}
{\rm d}\theta_v(t)&=&\sum_{u\in G}P_{u,v}\phi(\theta_u(t),\theta_v(t);\omega_v,\omega_u){\rm d}t+\psi(\theta_v(t),\omega_v){\rm d}t + {\rm d}B_v(t).
\end{eqnarray}

The network $\overline{N}=(H,\vecg{\mu}\mid_H,\vecg{\omega}\mid_H,\vecg{\theta(0)}\mid_H)$ is induced by the sub-graph $H\subset G$. We will write the corresponding process somewhat differently. Define $\overline{\mu}_{uv}:=\mu_{uv}\Ind{\{v,u\in H\}}$ and $\overline{\mu}_v:=\sum_{u}\overline{\mu}_{uv}$. We set:
\[\overline{P}_{uv}:= \frac{\overline{\mu}_{uv}}{\overline{\mu}_v}\, \Ind{\{\overline{\mu}_v>0\}}.\]

This matrix is in general different from $P$ if $v$ or $u$ are either outside of $H$ or in $\partial H$. 

We may define another system of diffusions satisfying:
\begin{eqnarray}\label{eq:expthetabar}
{\rm d}\overline{\theta}_v(t)&=&\sum_{u\in G}\overline{P}_{u,v}\phi(\overline{\theta}_u(t),\overline{\theta}_v(t);\omega_v,\omega_u){\rm d}t+\psi(\overline{\theta}_v(t),\omega_v){\rm d}t + {\rm d}B_v(t)
\end{eqnarray}
for each $v\in V$ (and not just the vertices in $H$), with initial conditions $\overline{\theta}_v(0) = \theta_v(0)$. With this definition, the diffusions inside $H$ do not interact with those outside $H$. Since $H$ is finite, the system inside $H$ has a unique strong solution, so the $\overline{\theta}_v(\cdot)$ with $v\in H$ correspond exactly to the $\theta^{\overline{N}}_v(\cdot)$ in the statement of the Lemma.  

Our goal then is to bound $\sup_{t\leq T}|\theta_v(t) - \overline{\theta}_v(t)|$ for $v\in H_0$. More specifically, it suffices to prove the next result
\begin{lemma}\label{le:carne3} Let $v \in H.$  Then almost surely
\[\mbox{ if }{\rm dist}(v,\partial H)=:r\geq r_0, \mbox{ then }\sup_{t\leq T}\left(\theta_v(t)-\tH{v}(t)\right)\leq C|\partial H|\expp{-r\log r}.\]
\end{lemma}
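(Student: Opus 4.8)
The plan is to control the difference $\delta_v(t) := \sup_{s\leq t}|\theta_v(s)-\overline{\theta}_v(s)|$ by a linear Gronwall-type argument, and then to recognize the resulting bound as an iterated sum that can be repackaged as a matrix exponential acting on the boundary. First I would subtract the two defining SDEs \eqref{eq:exptheta} and \eqref{eq:expthetabar}. Crucially, the Brownian increments ${\rm d}B_v$ cancel, so the difference process is \emph{pathwise differentiable} (no stochastic integral survives), which is what lets us argue almost surely rather than in expectation. For a vertex $v$ in the interior of $H$ (away from $\partial H$), we have $P_{uv}=\overline{P}_{uv}$ and $\mu_v=\overline{\mu}_v$, so the only discrepancy comes through the arguments $\theta_u$ versus $\overline{\theta}_u$ fed into $\phi$ and $\psi$. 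Using that $\phi$ and $\psi$ are Lipschitz (and $\phi$ bounded), I would bound
\[
\frac{{\rm d}}{{\rm d}t}|\theta_v(t)-\overline{\theta}_v(t)| \leq \blnorm{\phi}\sum_{u}P_{uv}\bigl(|\theta_u(t)-\overline{\theta}_u(t)| + |\theta_v(t)-\overline{\theta}_v(t)|\bigr) + \pnorm{Lip}{\psi}\,|\theta_v(t)-\overline{\theta}_v(t)|,
\]
the point being that $\sum_u P_{uv}=1$ whenever $\mu_v>0$, so the coefficients stay bounded uniformly in the degree. For $v\in\partial H$, the matrices $P$ and $\overline{P}$ genuinely differ, but there $\phi$ is bounded by $\blnorm{\phi}$, so these vertices contribute a bounded \emph{source term} rather than a propagating error; this is the mechanism by which the estimate only ``sees'' information entering through $\partial H$.

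Next I would integrate in time to obtain an integral inequality of the schematic form $\delta(t) \leq \int_0^t (A\,\delta(s) + b)\,{\rm d}s$ where $\delta(t)=(\delta_v(t))_{v\in H}$ is the vector of errors, $A$ is a nonnegative matrix with entries governed by $P$ and the Lipschitz constants, and $b$ is a source supported on $\partial H$ with $\|b\|\lesssim \blnorm{\phi}$. Iterating this inequality (Picard/Gronwall in vector form) yields $\sup_{t\leq T}\delta_v(t) \leq \sum_{k\geq 0} \tfrac{T^k}{k!}\,(A^k b)_v$, i.e. a matrix exponential $\bigl(e^{TA}b\bigr)_v$. The key structural observation is that $A$ is essentially $C\cdot P^{\mathsf T}$ plus a diagonal term, so $A^k$ is controlled by the $k$-step transition probabilities of the random walk with transition matrix $P$ on the network. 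Since $b$ lives on $\partial H$ and we evaluate at $v\in H_0$ with ${\rm dist}(v,\partial H)\geq r$, the quantity $(A^k b)_v$ is governed by walk paths from $v$ to $\partial H$, and such a path needs length $k\geq r$ to reach the boundary; the $P^k$ between points at distance $\geq r$ vanishes for $k<r$.

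The final and most delicate step is to extract the super-exponential factor $e^{-r\log r}$ from $e^{TA}$, and here I would invoke the Carne--Varopoulos bound as the paper signals. The crude estimate $\sum_{k\geq r} T^k/k!$ already beats any fixed exponential, giving roughly $T^r/r! \sim (eT/r)^r = \exp(-r\log r + O(r))$ by Stirling; combining this with the Carne--Varopoulos control on the heat kernel $P^k_{v,\partial H}$ (which supplies an additional Gaussian-type decay and, critically, keeps constants independent of the degrees) yields exactly $\delta_v \leq C|\partial H|\exp(-r\log r)$ for $r\geq r_0$. \textbf{The main obstacle} I anticipate is making the matrix-exponential/random-walk correspondence fully rigorous while keeping all constants uniform in the (unbounded, growing) maximum degree: one must check that $P$ is a genuine substochastic/stochastic kernel so that $\sum_u P_{uv}\leq 1$ holds edge-uniformly, and that passing from the scalar Gronwall bound to the operator norm of $e^{TA}$ does not smuggle in degree-dependent factors. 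The reason Carne--Varopoulos is the right tool, rather than a naive counting of paths, is precisely that it bounds the heat kernel in a way that survives taking $r$ much larger than any local degree, which is what converts the factorial decay $T^r/r!$ into the stated $\exp(-r\log r)$ estimate with clean control on the boundary prefactor $|\partial H|$.
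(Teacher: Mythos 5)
Your proposal is correct and follows essentially the same route as the paper's proof: cancel the Brownian motions, isolate a bounded source term supported on $\partial H$ (using that $P_{uv}=\overline{P}_{uv}$ for vertices away from the boundary, while $\sum_u |P_{uv}-\overline{P}_{uv}|\leq 2$ on $\partial H$), run a vector-valued linear Gronwall argument yielding a matrix exponential applied to the boundary indicator, and bound that exponential via the Carne--Varopoulos heat-kernel estimate. The only cosmetic difference is your supplementary ``crude'' computation (truncating the series at $k\geq r$, using column-stochasticity of $P$ and Stirling), which is in fact precisely the content of the Carne--Varopoulos bound in the relevant regime $r\gg CT$ (the continuous-time kernel is a Poisson mixture of discrete steps), and both arguments produce $\exp\left(-r\log\frac{r}{eCT}\right)$, which is then folded into the stated $C|\partial H|e^{-r\log r}$ by adjusting constants exactly as the paper does.
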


Indeed, once we have this, Lemma \ref{carne:thecorollary} follows if we take the supremum over $v\in H_0.$ The remainder of this section is dedicated to the proof of Lemma \ref{le:carne3}. 

\subsection{A Gronwall bound}\label{sec:gronwalllocality}

The next proposition will be used in our Grownwall argument. To state it, we let $\til{H}$ denote the set of vertices within distance at most $1$ of $H$, that is \[\til{H}=\{u\in G: u\in H \mbox{ or } \exists\, v\in H \mbox{ with } v\sim u\}.\] Also, we write $I$ for the identity matrix.

\begin{proposition}\label{prop:carne1} Let $v \in H$. The following inequality holds almost surely for all $t \in [0,T],$
\begin{eqnarray}
|\theta_v(t)-\ovl{\theta}_v(t)|\leq C\int_{0}^{t}\sum_{u\in \til{H}}(\oP+I)_{uv}|\theta_u(s)-\ovl{\theta}_u(s)|ds +2CT\Ind{\{v\in\partial H\}}.\nonumber 
\end{eqnarray}
where $C>0$ depends only on $\pnorm{Lip}{\psi}$ and $\blnorm{\phi}$.\end{proposition}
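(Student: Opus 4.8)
The plan is to subtract the two defining equations \eqnref{exptheta} and \eqnref{expthetabar} at a fixed vertex $v\in H$. Since both systems start from the same initial conditions ($\ovl{\theta}_v(0)=\theta_v(0)$) and are driven by the same Brownian motions, the initial values and the stochastic integrals cancel exactly, leaving the pathwise identity
\[
\theta_v(t)-\ovl{\theta}_v(t)=\int_0^t\big(\Delta^\phi_v(s)+\Delta^\psi_v(s)\big)\,ds,
\]
where $\Delta^\psi_v(s):=\psi(\theta_v(s),\omega_v)-\psi(\ovl{\theta}_v(s),\omega_v)$ and $\Delta^\phi_v(s)$ is the difference of the two interaction sums. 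Taking absolute values and moving them inside the integral, it suffices to bound $|\Delta^\psi_v(s)|+|\Delta^\phi_v(s)|$ pointwise and almost surely. The $\psi$-part is immediate from Lipschitz continuity, $|\Delta^\psi_v(s)|\le\pnorm{Lip}{\psi}\,|\theta_v(s)-\ovl{\theta}_v(s)|$, which will be absorbed into the diagonal ($u=v$) entry of $(\oP+I)_{uv}$.

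For the interaction term I would insert the mixed quantity $\sum_u\oP_{uv}\phi(\theta_u,\theta_v;\omega_v,\omega_u)$ to split $\Delta^\phi_v$ as
\begin{align*}
\Delta^\phi_v &= \sum_u\oP_{uv}\big[\phi(\theta_u,\theta_v;\omega_v,\omega_u)-\phi(\ovl{\theta}_u,\ovl{\theta}_v;\omega_v,\omega_u)\big]\\
&\quad + \sum_u(P_{uv}-\oP_{uv})\,\phi(\theta_u,\theta_v;\omega_v,\omega_u),
\end{align*}
(suppressing the time argument $s$). The first ``same-kernel'' sum is controlled by the Lipschitz constant of $\phi$ in its first two arguments together with $\sum_u\oP_{uv}\le 1$ (each column of $\oP$ is sub-stochastic), which yields $\pnorm{Lip}{\phi}\big(\sum_u\oP_{uv}|\theta_u-\ovl{\theta}_u|+|\theta_v-\ovl{\theta}_v|\big)$, exactly a multiple of $\sum_u(\oP+I)_{uv}|\theta_u-\ovl{\theta}_u|$.

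The second ``kernel-mismatch'' sum is the delicate step, and it is where both the boundedness of $\phi$ and the boundary indicator enter. The key structural observation is that $P_{uv}=\oP_{uv}$ for every $u$ as soon as $v\in H\setminus\partial H$: an interior vertex has all its neighbours inside $H$, so restricting the weights to $H$ changes neither the total weight $\mu_v$ nor any individual $\mu_{uv}$. Hence the mismatch sum vanishes unless $v\in\partial H$. When $v\in\partial H$ I would bound it crudely by $\supnorm{\phi}\sum_u|P_{uv}-\oP_{uv}|\le 2\,\supnorm{\phi}$, using that $P_{\cdot v}$ and $\oP_{\cdot v}$ are both sub-stochastic so their $\ell^1$-distance is at most $2$. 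After integrating over $[0,t]\subseteq[0,T]$ this contributes the additive constant $2CT\,\Ind{\{v\in\partial H\}}$.

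Finally I would collect the three contributions with the single constant $C:=\pnorm{Lip}{\psi}+\blnorm{\phi}$ (so that $C$ simultaneously dominates $\pnorm{Lip}{\psi}$, $\pnorm{Lip}{\phi}$ and $\supnorm{\phi}$), and note that every surviving summand has $(\oP+I)_{uv}\neq 0$, which forces either $u=v\in H$ or $u\sim v$ with $u\in H$; in either case $u\in\til{H}$, so the sum may be restricted to $\til{H}$ without loss. This yields precisely the claimed inequality. The whole argument is pathwise, so the ``almost surely'' qualifier carries over from the a.s.\ existence of the two strong solutions. I expect the only genuinely subtle point to be the bookkeeping of which vertices make $P$ and $\oP$ differ; the remaining estimates are routine Lipschitz bounds that $C$ absorbs.
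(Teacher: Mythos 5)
Your proposal is correct and follows essentially the same route as the paper's proof: the identical three-term decomposition (kernel mismatch, same-kernel Lipschitz difference, and $\psi$-difference), the same observation that $P_{\cdot v}=\oP_{\cdot v}$ off $\partial H$, and the same absorption of the diagonal term into $(\oP+I)_{uv}$. The only cosmetic difference is that you bound $\sum_u|P_{uv}-\oP_{uv}|\le 2$ for boundary vertices by sub-stochasticity of both columns, whereas the paper computes this sum explicitly as $2(1-\ovl{\mu}_v/\mu_v)$; both give the same constant.
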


\begin{proof}Since the initial conditions are coupled to be equal they cancel when we calculate $\theta_v(t)-\tH{v}(t).$ The Brownian motion also cancels. From Equation \ref{eq:exptheta} we obtain

\begin{eqnarray}
\theta_v(t)-\ovl{\theta}_v(t)=\int_{0}^{t} \Delta_1(v,s)+\Delta_2(v,s)+\Delta_3(v,s) {\rm d}s \nonumber
\end{eqnarray}
where
\begin{eqnarray}
\Delta_1(v,s)&:=&\sum_{u\in G}(P_{uv}-\oP_{uv})\phi(\theta_u(s),\theta_v(s);\omega_u,\omega_v),\nonumber \\
\Delta_2(v,s)&:=&\sum_{u\in G}\oP_{uv}(\phi(\theta_u(s),\theta_v(s);\omega_u,\omega_v)-\phi(\tH{u}(s),\tH{v}(s);\omega_u,\omega_v)),\nonumber \\
\Delta_3(v,s)&:=&\psi(\theta_v(s);\omega_v)-\psi(\tH{v}(s);\omega_v).\nonumber
\end{eqnarray}
First observe that the sums involving $P$ and $\oP$ for $u \in G$ are in fact over $u\in \til{H}$ since $v\in H.$ This is due the fact this matrices have non-zero entries $P_{\cdot,v},\oP_{\cdot,v}$ just for neighbors of $v$.
In this way we have the following bounds
\begin{eqnarray}
|\Delta_1(v,s)|&\leq &\supnorm{\phi}\left(\sum_{u\in \til{H}}|P_{uv}-\oP_{uv}|\right),\nonumber \\
|\Delta_2(v,s)|&\leq &\sum_{u\in \til{H}}\oP_{uv}\pnorm{Lip}{\phi}\left(|\theta_v(s)-\ovl{\theta}_v(s)|+|\theta_u(s)-\ovl{\theta}_u(s)|\right)\, \mbox{, and} \nonumber \\
|\Delta_3(v,s)|&\leq &\pnorm{Lip}{\psi}|\theta_v(s)-\ovl{\theta}_v(s)|.\nonumber
\end{eqnarray}

Now we bound the RHS of the first inequality. For this, we note that if $v\not \in \partial H$ or $\mu_v=0$, then $P_{uv}=\overline{P}_{uv}$ for all $u$. On the other hand, if $v\in \partial H$ and $\mu_v>0$, then:

\begin{eqnarray}
\sum_{u\in\til{H}}|P_{uv}-\oP_{uv}|&=&\sum_{u \in H}|P_{uv}-\oP_{uv}|+\sum_{u \notin H}|P_{uv}-\oP_{uv}| \nonumber \\
(u\notin H \implies \oP_{uv}=0)&=&\sum_{u \in H}|P_{uv}-\oP_{uv}|+\sum_{u \notin H}|P_{uv}| \nonumber \\
(u\in H\implies \mu_{uv}=\ovl{\mu}_{uv})&=&\sum_{u \in H}\mu_{uv}\modulo{\dfrac{1}{\mu_v}-\dfrac{1}{\ovl{\mu}_{v}}}+\sum_{u \notin H}|P_{uv}| \nonumber \\
&\leq & \ovl{\mu}_{v}\modulo{\dfrac{1}{\mu_v}-\dfrac{1}{\ovl{\mu}_{v}}}+(\mu_v-\ovl{\mu}_{v})\dfrac{1}{\mu_v} \nonumber \\
(\ovl{\mu}_v\leq \mu_v)&=&2\left(1-\dfrac{\ovl{\mu}_{v}}{\mu_{v}}\right)\leq 2.\nonumber
\end{eqnarray}  

In any case we have that,
\[\sum_{u\in \til{H}}|P_{uv}-\oP_{uv}|\leq 2\Ind{v \in \partial H}.\]
Combining these bounds, we obtain the result.\end{proof}

\subsection{End of proof}\label{sec:finishlocality}

We now finish the proof of Lemma \ref{le:carne3}, which implies Lemma \ref{carne:thecorollary}. We will apply the Linear Gronwall's Inequality (Corollary \ref{gr:3rd}) to finish the proof of Lemma \ref{le:carne3}. 

Going back to Proposition \ref{prop:carne1} we observe that 
$\til{H}$ is finite. In particular, the matrices $\overline{P}$ and $I$ considered are finite-dimensional. So we can apply Corollary \ref{gr:3rd} with:
\begin{enumerate}
\item $\vec{u}(t)=(|\theta_v(t)-\ovl{\theta}_v(t)|)_{v \in \til{H}},$ which have continuous entries,
\item $\vec{a}(t)=\vec{a}:=\left(CT\Ind{[v \in \partial H]}\right)_{v \in \til{H}},$ and we observe that each entry of this vector is non-negative, and
\item $M(t)_{uv} = M_{uv}=C(\oP+I)_{uv},$ for $u,v\in\til{H}$ and we observe that this matrix does not depend on time $t$, it is entry-wise non-negative and it is finite dimensional.
\end{enumerate}

In vector notation, we obtain:
\[\vec{u}(t)\leq \int_0^t (M\vec{u}(s) + \vec{a})\,ds\]
and the Corollary says that $\vec{u}(t)\leq  \exp(tM)\,\vec{a}$ entrywise. This is the same as saying that, for each $v\in H$
\[|\theta_v(t) - \overline{\theta}_v(t)|\leq CT\,\sum_{u\in \partial H}\exp(Ct\,(\oP+I))_{uv} \leq CTe^{2T}\,\sum_{u\in \partial H}\exp(Ct\,(\oP-I))_{uv}.\]
To bound this last expression, we note that 
\[\exp(Ct\,(\oP-I))_{uv} = \overline{\mu}_u\,q_{Ct}(u,v)\]
where $q_{Ct}(u,v)$ is the heat kernel at time $Ct$ of a continuous time random walk over $H$ with transition rates equal to $1$ and reversible transition probabilities $\overline{P}_{uv}$ (reversibility follows from symmetry of $\mu_{uv}$). The Carne-Varoupoulos bound for the heat kernel (Theorem 5.17 of \cite[Section~5.1]{barlow2017random}) implies that for any time $s\geq 0$ and any $v,u \in H$ with $R={\rm dist}(v,u)\geq es$ ($e$ is the Euler constant)
\[q_s(v,u)\leq \dfrac{1}{\ovl{\mu}_{v}\vee\ovl{\mu}_{u}}\expp{-s-R\log \dfrac{R}{es}}\leq \dfrac{1}{\ovl{\mu}_{u}}\expp{-s-R\log \dfrac{R}{es}}.\]
We apply this with $s = Ct\leq CT$, and obtain that, if $R={\rm dist}(v,\partial H)\geq eCT$, then:
\[\sup_{t\leq T}|\theta_v(t) - \overline{\theta}_v(t)|\leq |\partial H| CTe^{2T}\,\expp{-Ct-R\log \dfrac{R}{eCT}}.\]

So we finish by taking $r_0=\lceil eCT\rceil$ and adjusting $C$ accordingly.

\section{Interacting diffusions over infinite graphs}\label{sec:conttheta}

In this section, we prove Theorem \ref{theo:exttheta}. We first give a sketch of the argument.

\begin{enumerate}
\item Our main task will be to construct our system of diffusions over infinite networks via limits of systems over finite sub-networks. More precisely, for an infinite rooted network $(N,o)$ and a radius $r\geq 1$ we have the finite rooted network $(N,o)_r$ (cf. Section \ref{sec:networks}). It is clear that the sequence $((N,o)_r)_{r\geq 1}$ converges in the local topology towards to $(N,o)$.
 
\item The random networks $(N,o)_r^{\theta}$ ($(N,o)_r^{\theta}\neq (N^\theta,o)_r$, see Definition \ref{def:Ntheta}) are well defined since the networks $(N,o)_r$ are finite. We will use the fact that $(N,o)$ is nice (cf. Definition \ref{def:goodnet}) and Lemma \ref{carne:thecorollary} to show that the sequence $(N,o)_r^{\theta}$ (considering $o$ as the root) has a limit in distribution. We will then show that the limit is the rooted network $(N^{\theta},o)$ replacing the initial conditions by $\vecg{\theta}^N(\cdot)$ that is the unique strong solution of the infinite system of SDEs in (\ref{eq:sde}) for $N$. 

\item To prove the continuity of the map $(N,o) \mbox{ nice}\mapsto \mbox{Law of } (N^\theta,o)$ we will also need to use that for finite networks the map $(N,o) \mbox{ finite}\mapsto \mbox{Law of } (N^\theta,o)$ is continuous (Lemma \ref{le:samegraph}).
\end{enumerate}

We give the formal proof of Theorem \ref{theo:exttheta} over the next subsections.

\subsection{Proof of existence}\label{sub:prooftheo}

Throughout the proof, we will assume that we are given i.i.d. standard Brownian motions $(B_v)_{v\in V}$ defined on the vertices of our network. The equalities and estimates in all Section \ref{sec:conttheta} will hold almost surely with respect to the law of the Brownian motions. For each $r\geq 0$, we have a finite network $(N,o)_r$ with Brownian motions attached to its vertices. Following Definition \ref{def:supersparseID}, we can build a system of interacting diffusions over the vertices $v\in (N,o)_r$ via:
\begin{eqnarray}\label{eq:forfixedr}
{\rm d}\theta^{(r)}_{v}(t)=\frac{1}{\mu^{(r)}_v} \sum_{u \in (N,o)_r}\mu_{uv}\phi(\theta^{(r)}_{u}(t),\theta^{(r)}_{v}(t);\omega_v,\omega_u){\rm d}t+\psi(\theta^{(r)}_{v}(t);\omega_v){\rm d}t + {\rm d}B_v(t),
\end{eqnarray}
with the weights $\vecg{\mu}=(\mu_{vu})_{vu\in E_G}$, media variables $\vecg{\omega}=(\omega_v)_{v\in V_G}$ and initial conditions $\vecg{\theta^{(r)}}(0)=(\theta_v(0))_{v\in V_G}$ determined by the network $(N,o)_r$. We also use the notation $\mu^{(r)}_v := \sum_{u \in (N,o)_r}\mu_{uv}$. The point of this construction is that it couples our interacting diffusions over $(N,o)_r$ for all $r$ simultaneously. 

We now apply Lemma \ref{carne:thecorollary}, with $H_0=(N,o)_s$ for some $s\geq 0$, $H=(N,o)_{r}$ for $r\geq s$, and with the network $N$ replaced by a finite ball $(N,o)_{r'}$ with $r'\geq r$. Since $(N,o)_{r'}$ is always finite, the solution to the system is well defined and we can indeed apply the Lemma. We have that ${\rm dist}((N,o)_s,\partial (N,o)_r)=r-s.$ Moreover, since $(N,o)$ is nice, there exist $a>0$ independent of $r$ with $|\partial(N,o)_r|\leq ae^{ar}$. We conclude that 
\[\forall r'\geq r\geq s+r_0,\ \max_{v \in (N,o)_s}\left(\sup_{t\leq T}|\theta_v^{(r)}(t)-\theta^{(r')}_{v}(t)|\right)\leq Ca\exp(ar-(r-s)\log (r-s)).\]

The fact that the RHS goes to $0$ for fixed $s$ and $r',r\to +\infty$ implies that, for each $v\in V$, $\theta_v^{(r)}(\cdot)$ is a Cauchy sequence (in the uniform norm over $[0,T]$) and converges over $[0,T]$ to a continuous function $\theta^N_v(\cdot)$. We also have the estimate:
\begin{equation}\label{eq:convergiu}\forall r\geq s+r_0,\,\max_{v \in (N,o)_s}\left(\sup_{t\leq T}|\theta_v^{(r)}(t)-\theta^N_{v}(t)|\right)\leq Ca\expp{ar-(r-s)\log (r-s)},\end{equation}
and we will show now that this estimate implies that $\vecg{\theta}^N=(\theta^N_v(\cdot))_{v\in V}$ is a system of interacting diffusions over $N$ in the sense of Definition \ref{def:supersparseID}. To do this first observe that $\mu_v^{(r)}=\mu_v$ for large $r$. For any fixed $r\geq 1$ we have from \eqref{eq:forfixedr}
\begin{multline}\label{eq:integralform}
\begin{split}
\theta^{(r)}_{v}(t)=\theta^{(r)}_{v}(0)+\frac{1}{\mu^{(r)}_v} \sum_{u \in (N,o)_r}\mu_{uv}\int_0^t\phi(\theta^{(r)}_{u}(s),\theta^{(r)}_{v}(s);\omega_v,\omega_u){\rm d}s+\\
+\int_0^t\psi(\theta^{(r)}_{v}(s);\omega_v){\rm d}s + B_v(s).
\end{split}
\end{multline}
The estimate \eqref{eq:convergiu} says that for all $u$ neighbour of $v$ it holds that $\theta^{(r)}_u$ converges to $\theta^N_u$ in the supremum norm and almost surely. Using the basic fact that if $f_n$ converges in the supremum norm to $f$ then $\int_0^tf_n(s){\rm d}s\to\int_0^tf(s){\rm d}s$ we obtain from \eqref{eq:integralform} that the following holds almost surely
\begin{multline*}
\begin{split}
\theta^{N}_{v}(t)=\theta^{N}_{v}(0)+\frac{1}{\mu^{N}_v} \sum_{u \in N}\mu_{uv}\int_0^t\phi(\theta^{N}_{u}(s),\theta^{N}_{v}(s);\omega_v,\omega_u){\rm d}s+\\
+\int_0^t\psi(\theta^{N}_{v}(s);\omega_v){\rm d}s + B_v(s),
\end{split}
\end{multline*}
as we wanted.
\subsection{Proof of uniqueness}

The above implies {\em existence} of a system of interacting diffusions over $N$. {\em Uniqueness} of such a process is also easy to obtain. Indeed, suppose $\vecg{\beta}^N$ is another strong solution to the same system of equations defined in terms of the same Brownian motions $(B_v)_{v\in V}$. Then an application of the locality result, Lemma \ref{carne:thecorollary}, to $H_0=\{v\}$, $H=(N,o)_r$ for large $r$, and $N$, reveals that \eqnref{convergiu} must also hold with $\beta^N_v$ replacing $\theta^N_v$. Therefore,
\[\forall v\in V\,:\,\beta^N_v(\cdot)=\lim_{r\to +\infty}\theta_v^{(r)}(\cdot) = \theta^N_v(\cdot).\] 

\subsection{Proof of continuity}

What we have seen so far is that for each nice rooted network $(N,o)$ one may uniquely define a system of interacting diffusions $\vecg{\theta}^N(\cdot)$. Let $[N^{\theta},o]\in \sC^*$ be the resulting random network when one replaces the initial condition for the diffusions as new marks to the vertices (in the sense of Definition \ref{def:Ntheta}). So let $\ovl{\Theta}[N,o]$ denote the law of $[N^{\theta},o]\in \sC^*$ (Definition \ref{def:Theta}). The uniqueness statement above implies that $\ovl{\Theta}$ extends the definition of $\Theta$ over finite networks. 

We must now show that $\ovl{\Theta}$ is a {\em continuous} map from $\sB^*$ (the set of nice networks) to $\sP(\sC^*)$ (the set of probability measures over $\sC^*$ with the BL metric). We start with some preliminaries. We note once again that, due to Lemma \ref{carne:thecorollary}, we have the  more precise estimate:
\begin{equation}\label{eq:boafronteira}\forall r\geq s+r_0,\,\max_{v \in (N,o)_s}\left(\sup_{t\leq T}|\theta_v^{(r)}(t)-\theta^N_{v}(t)|\right)\leq C|\partial(N,o)_r|\expp{-(r-s)\log (r-s)}.\end{equation}

This bound immediately translates to a bound for the distance between $(N^{\theta},o)$ and $(N,o)_r^{\theta}$ in the space $\sC^*.$
Furthermore, this construction of $(N^{\theta},o)$, and $(N,o)_r^{\theta}$ is a coupling of the measures $\ovl{\Theta}[N,o]$ and $\ovl{\Theta}[N,o]_r$, and we obtain:
\begin{equation}\label{eq:taquasecontinuity}\forall r\geq s+r_0,\ d_{BL}(\ovl{\Theta}[N,o],\ovl{\Theta}[N,o]_r)\leq \frac{1}{1+s} +C|\partial(N,o)_r|\expp{-(r-s)\log (r-s)}.\end{equation}

The important point is that \eqnref{taquasecontinuity} is applicable to {\em all} $[N,o]\in\sB^*$.

Now consider  a sequence $[N_n,o_n]$  of networks converging to $[N,o]$. We wish to show that $d_{BL}(\ovl{\Theta}[N,o],\ovl{\Theta}[N_n,o_n])\to 0$. To do this, we use the following observation: for any fixed $r$, the probability laws corresponding to $[N_n,o_n]_r$ converge to those of $[N,o]_r$:
\begin{align}\label{eq:lim-finitenet}
d_{BL}(\ovl{\Theta}[N,o]_r,\ovl{\Theta}[N_n,o_n]_r)\to 0.
\end{align}

Indeed, this is true because $[N,o]_r$ and $[N_n,o_n]_r$  are {\em finite}. This is discussed in detail in Section \ref{sec:limfinitenet} of the Appendix.

The triangle inequality gives:
\begin{eqnarray*}d_{BL}(\ovl{\Theta}[N,o],\ovl{\Theta}[N_n,o_n])&\leq  & d_{BL}(\ovl{\Theta}[N,o]_r,\ovl{\Theta}[N_n,o_n]_r)\\ & & + d_{BL}(\ovl{\Theta}[N,o]_r,\ovl{\Theta}[N,o]) \\ & & +d_{BL}(\ovl{\Theta}[N_n,o_n],\ovl{\Theta}[N_n,o_n]_r).\end{eqnarray*}
When $n\to +\infty$, the first term in the RHS shrinks to $0$. Using \eqnref{taquasecontinuity} to bound the other two terms, we obtain:
\begin{multline*} \limsup_{n\to\infty}d_{BL}(\ovl{\Theta}[N,o],\ovl{\Theta}[N_n,o_n])\leq  \frac{1}{1+s} +C|\partial(N,o)_r|\expp{-(r-s)\log (r-s)}\\   +  \limsup_{n\to\infty}\left(\frac{1}{1+s} +C|\partial(N_n,o_n)_r|\expp{-(r-s)\log (r-s)}\right). 
\end{multline*}
The local convergence of $[N_n,o_n]$ to $[N,o]$ implies that $|\partial(N_n,o_n)_r|\to |\partial(N,o)_r|$, so:
\[\limsup_{n\to\infty} d_{BL}(\ovl{\Theta}[N,o],\ovl{\Theta}[N_n,o_n])\leq \frac{2}{1+s} +2C|\partial(N,o)_r|\expp{-(r-s)\log (r-s)}.\]
Since $r\geq r_0+s$ are arbitrary (and $r_0$ is constant), we may let $s=r/2$ and make $r\to +\infty$ to obtain
\[\limsup_{n\to\infty} d_{BL}(\ovl{\Theta}[N,o],\ovl{\Theta}[N_n,o_n])\leq 0,\]
as desired.

\section{Hydrodynamic limit}\label{sec:proof:theo:hydlimit}
In this section we will prove Theorem \ref{theo:hydlimit}.

Our goal is the following. Given that $U(N_n)\to\nu$ in the space $\sP(\sN^*)$ and up to some additional conditions we want to show that $U(N_n^{\theta})\to \nu\ovl{\Theta}$ in $\sP(\sC^*)$ almost surely. In this section ``almost surely" means ``almost surely with respect to the law of the Brownian motions". By a standard argument, it suffices to show that for any test function $h:\sC^*\to\R$ with $\blnorm{h}\leq 1$, 
\[\mbox{{\bf Goal:} }U(N_n^{\theta})(h)\to \nu\ovl{\Theta}(h)\mbox{ almost surely,}\]
with respect to the law of the Brownian motions.
It will be useful to consider the intermediate expression:
\[U(N_n)\ovl{\Theta}(h)=\Ex{U(N_n^{\theta})(h)}\]
where the expectation $\Ex{\,\cdot\,}$ is with respect to the Brownian motions. Since $\ovl{\Theta}:\sB^*\to\sP(\sC^*)$ is continuous (by Theorem \ref{theo:exttheta}) and $U(N_n)\to \nu$ weakly, one may easily show that
\begin{align}\label{eq:convinmean}
U(N_n)\ovl{\Theta}(h)\to \nu\ovl{\Theta}(h).
\end{align}

Therefore, our goal is tantamount to showing that:
\begin{equation}\label{eq:goalhydro}\mbox{{\bf Goal (restated):} }U(N_n^{\theta})(h) - \Ex{U(N_n^{\theta})(h)}\to 0\mbox{ almost surely,}\end{equation}
with respect to the law of the Brownian motions, where we recall $\|h\|_{BL}\leq 1$. The proof idea for \eqnref{goalhydro} is to use the fact that $U(N_n^{\theta})(h)$ is a function of independent Brownian motions. If we could control the effect of replacing one of the Brownian motions, then we can prove concentration by Azuma's inequality. To make this work, we will need to consider a truncated process for $r$ fixed given by the networks: 
\[(N_n(v),v)_r^{\theta}\]
replacing the initial conditions for the interacting diffusions $\vecg{\theta}^{(N_n(v),v)_r}(\cdot)$. With this motivation we define the $r-$neighborhood empirical measure
\begin{align}\label{eq:rneigh}
U^{(r)}(N_n^{\theta})=\frac{1}{n}\sum_{v=1}^n\delta_{(N_n(v),v)_r^{\theta}}
\end{align}
After considering these networks for $r$ fixed we will need to return to our original network. In what follows we will need to use Azuma's inequality.
\begin{theorem}[Azuma's inequality, Theorem 6.2, \cite{boucheron2013concentration}]\label{thm:azuma}
Let $\sX$ be a measurable space. Assume that the function $f:\sX^n\to\R$ satisfies the bounded differences assumption: for each $1\leq i \leq n,$
\[\sup_{\substack{x_1,\cdots,x_n\in \sX\\x_i'\in \sX}}|f(x_1,\cdots,x_{i-1},x_i,x_{i+1},\cdots,x_n)-f(x_1,\cdots,x_{i-1},x_i',x_{i+1},\cdots,x_n)|\leq c_i\]
and define $\til{\nu}=\frac{1}{4}\sum_{i=1}^nc_i^2.$ Let $Z = f(X_1 ,...,X_n )$ where the $X_i\in \sX$ are independent. Then
\[\Pr{Z-\Ex{Z} > t}\leq e^{-t^2/(2\til{\nu})}.\]
\end{theorem}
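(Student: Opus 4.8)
The plan is to prove this by the classical Azuma--Hoeffding martingale argument (the ``method of bounded differences''). First I would fix the filtration $\sF_i := \sigma(X_1,\dots,X_i)$ for $0\leq i\leq n$ (with $\sF_0$ trivial) and introduce the Doob martingale $Z_i := \Exc{Z}{\sF_i}$, so that $Z_0 = \Ex{Z}$ and $Z_n = Z$. The target tail bound then concerns the sum $Z - \Ex{Z} = \sum_{i=1}^n D_i$ of the martingale differences $D_i := Z_i - Z_{i-1}$.

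The first key step is to control the conditional range of each $D_i$. Because the $X_j$ are independent, I can write $Z_i = g_i(X_1,\dots,X_i)$, where $g_i(x_1,\dots,x_i) := \Ex{f(x_1,\dots,x_i,X_{i+1},\dots,X_n)}$ is obtained by integrating out the remaining coordinates. Define the $\sF_{i-1}$-measurable quantities
\[
M_i := \sup_{x\in\sX} g_i(X_1,\dots,X_{i-1},x), \qquad m_i := \inf_{x\in\sX} g_i(X_1,\dots,X_{i-1},x).
\]
Then $m_i - Z_{i-1} \leq D_i \leq M_i - Z_{i-1}$, and the bounded-differences hypothesis gives $M_i - m_i \leq c_i$, since $g_i(\cdots,x,\cdots) - g_i(\cdots,x',\cdots)$ is an average over $X_{i+1},\dots,X_n$ of differences of $f$ each bounded by $c_i$ in absolute value. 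Hence, conditionally on $\sF_{i-1}$, the variable $D_i$ has mean zero and lies in an interval of length at most $c_i$.

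The second key step is Hoeffding's lemma: any random variable $Y$ with $\Ex{Y}=0$ taking values in an interval of length $\ell$ satisfies $\Ex{\expp{\lambda Y}}\leq \expp{\lambda^2\ell^2/8}$ for all $\lambda\in\R$. Applying this conditionally on $\sF_{i-1}$ to $Y = D_i$ with $\ell = c_i$ yields $\Exc{\expp{\lambda D_i}}{\sF_{i-1}}\leq \expp{\lambda^2 c_i^2/8}$. I would then chain these estimates using the tower property: since $\sum_{i<n} D_i$ is $\sF_{n-1}$-measurable, conditioning successively on $\sF_{n-1},\sF_{n-2},\dots$ lets each factor contribute at most $\expp{\lambda^2 c_i^2/8}$, giving
\[
\Ex{\expp{\lambda(Z-\Ex{Z})}}\leq \expp{\tfrac{\lambda^2}{8}\sum_{i=1}^n c_i^2} = \expp{\tfrac{\lambda^2\til{\nu}}{2}},
\]
using $\til{\nu}=\tfrac14\sum_i c_i^2$. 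Finally, the Chernoff bound $\Pr{Z-\Ex{Z}>t}\leq \expp{-\lambda t}\,\Ex{\expp{\lambda(Z-\Ex{Z})}}$ optimized at $\lambda = t/\til{\nu}$ produces the claimed $\expp{-t^2/(2\til{\nu})}$.

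The main obstacle is Hoeffding's lemma, the analytic heart of the argument: one establishes it by showing the cumulant generating function $\Lambda(\lambda)=\log\Ex{\expp{\lambda Y}}$ satisfies $\Lambda(0)=\Lambda'(0)=0$ and $\Lambda''(\lambda)\leq \ell^2/4$ (the variance of a random variable supported in an interval of length $\ell$ is at most $\ell^2/4$), and then integrating twice. A secondary point demanding care is the measurability bookkeeping in the first step, where independence of the $X_j$ is exactly what permits representing $Z_i$ as a deterministic function of $(X_1,\dots,X_i)$ and pushing the bounded-differences estimate through the averaging over $X_{i+1},\dots,X_n$.
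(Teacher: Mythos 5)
Your proposal is correct: the Doob-martingale decomposition, the conditional range bound $M_i-m_i\leq c_i$ (which uses independence to write $Z_i=g_i(X_1,\dots,X_i)$ and push the bounded-differences hypothesis through the average over $X_{i+1},\dots,X_n$), the conditional Hoeffding lemma, the tower-property chaining to $\Ex{\expp{\lambda(Z-\Ex{Z})}}\leq \expp{\lambda^2\til{\nu}/2}$, and the Chernoff optimization at $\lambda=t/\til{\nu}$ all fit together with the right constants, since $\til{\nu}=\frac14\sum_i c_i^2$ makes $\frac{\lambda^2}{8}\sum_i c_i^2=\frac{\lambda^2\til{\nu}}{2}$ and the exponent $-\lambda t+\lambda^2\til{\nu}/2$ minimizes to $-t^2/(2\til{\nu})$.

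One point of comparison is worth noting: the paper gives no proof of this statement at all, importing it as Theorem 6.2 of Boucheron, Lugosi and Massart, and in that reference the bounded differences inequality is actually established by the entropy method (a modified logarithmic Sobolev inequality combined with the Herbst argument), not by martingales. So your argument is a genuinely different, and more elementary, route than the cited source: the classical Azuma--Hoeffding--McDiarmid proof. What the martingale route buys is self-containedness --- only Hoeffding's lemma and the tower property are needed; what the entropy method buys in the reference is a uniform framework that also yields sharper and more flexible results (e.g., concentration under Efron--Stein-type conditions) which the martingale method does not reach. Your two flagged obstacles are the right ones: Hoeffding's lemma via $\Lambda''(\lambda)\leq \ell^2/4$ (the tilted variable still lives in an interval of length $\ell$) is the analytic core, and the measurability of the suprema $M_i,m_i$ over a general measurable space $\sX$ deserves the care you mention (in full generality one replaces them by essential suprema or countable approximations; for the paper's application, where $\sX=C([0,T];\R)$ is Polish, this is routine).
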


\begin{lemma}\label{le:concentration} For any $r\in\N$ fixed, the following holds almost surely, with respect to the law of the Brownian motions,
\[\limsup_{n\to\infty}\left|U^{(r)}(N_n^{\theta})(h)-\Ex{U^{(r)}(N_n^{\theta})(h)}\right|=0.\]
\end{lemma}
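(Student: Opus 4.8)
The plan is to recognize $U^{(r)}(N_n^{\theta})(h)$ as a bounded, measurable function of the $n$ independent Brownian motions attached to the vertices, and then apply Azuma's inequality (Theorem \ref{thm:azuma}) followed by Borel--Cantelli. Write $Z_n := U^{(r)}(N_n^{\theta})(h) = \frac1n\sum_{v=1}^n h\big((N_n(v),v)_r^{\theta}\big)$. The crucial structural observation is \emph{locality of dependence}: for each vertex $v$, the truncated system $\vecg{\theta}^{(N_n(v),v)_r}(\cdot)$ is the strong solution of the \emph{finite} SDE system \eqref{eq:sde} on the ball $(N_n(v),v)_r$, hence it is a measurable function of only those Brownian motions $B_u$ with ${\rm dist}(u,v)\leq r$. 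Consequently $Z_n = f(B_1,\dots,B_n)$ for a measurable $f$ on $C([0,T];\R)^n$, and the summand indexed by $v$ depends on $B_i$ only when $i$ lies within distance $r$ of $v$.

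Next I would verify the bounded-differences hypothesis of Theorem \ref{thm:azuma}. Replacing a single $B_i$ by $B_i'$ can only alter those summands $h\big((N_n(v),v)_r^{\theta}\big)$ with ${\rm dist}(i,v)\leq r$, i.e. those $v$ in the $r$-ball $B(i,r)$ around $i$. Since $\blnorm{h}\leq 1$ forces $\supnorm{h}\leq 1$, each such summand changes by at most $2$, so that, deterministically over all Brownian paths,
\[ c_i \leq \frac{2\,|B(i,r)|}{n}. \]
Writing $\Delta_n := \max_{v\in[n]}d_v^{N_n} = n^{\eps_n}$, I would bound $|B(i,r)|\leq (r+1)\Delta_n^{r} = (r+1)\,n^{r\eps_n}$. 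Hence, with $\til{\nu} = \frac14\sum_i c_i^2$ as in Theorem \ref{thm:azuma},
\[ \til{\nu} \leq \frac14\cdot n\cdot\left(\frac{2(r+1)n^{r\eps_n}}{n}\right)^2 = (r+1)^2\,n^{\,2r\eps_n-1}. \]
Because $r$ is fixed and $\eps_n\to 0$, the exponent $2r\eps_n-1\to -1$, so $\til\nu = n^{-1+o(1)}\to 0$. This is exactly where Assumption \ref{item:degree} (small maximum degree) is indispensable.

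Finally, applying Theorem \ref{thm:azuma} to both $f$ and $-f$ (the bound on $c_i$ is symmetric in the two arguments), I obtain for every fixed $t>0$
\[ \Pr{\,|Z_n - \Ex{Z_n}| > t\,} \leq 2\,\expp{-\frac{t^2}{2\til\nu}} \leq 2\,\expp{-\frac{t^2\,n^{\,1-2r\eps_n}}{2(r+1)^2}}. \]
Since $1-2r\eps_n\to 1$, for all large $n$ the exponent dominates $n^{1/2}$, so the right-hand side is summable in $n$; Borel--Cantelli then gives that almost surely $|Z_n - \Ex{Z_n}|\leq t$ for all large $n$. Intersecting the almost-sure events obtained along $t=1/k$, $k\in\N$, yields $\limsup_{n\to\infty}|Z_n-\Ex{Z_n}|=0$ almost surely, which is the claim (note $\Ex{Z_n}$ is finite since $|Z_n|\leq 1$). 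The only genuine obstacle is the locality-of-dependence bookkeeping together with controlling $|B(i,r)|$; once the maximum-degree hypothesis reduces this to $n^{o(1)}$, the concentration and Borel--Cantelli steps are routine.
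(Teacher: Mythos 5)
Your proposal is correct and follows essentially the same route as the paper's own proof: writing $U^{(r)}(N_n^{\theta})(h)$ as a measurable function of the independent Brownian motions via the strong-solution property, verifying the bounded-differences condition by noting that changing $B_i$ only affects summands indexed by vertices in the $r$-ball around $i$, bounding that ball's size by $n^{O(r\eps_n)}$ using the maximum-degree assumption, and concluding with Azuma's inequality and Borel--Cantelli. The only differences are cosmetic (explicit constants $2(r+1)$ versus the paper's absorption of these into $n^{(r+1)\eps_n}$, and your explicit intersection over $t=1/k$, which the paper leaves implicit).
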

\begin{proof} Recall that the assumptions on our networks imply:
\begin{align}\label{eq:degree}
\max_{v\in [n]}d_v^{N_n}=n^{\eps_n} \mbox{ with } \lim_{n\to\infty}\eps_n=0.
\end{align}
From (\ref{eq:rneigh}) we have that
\begin{flalign*} 
U^{(r)}(N_n^{\theta})(h)=\frac{1}{n}\sum_{v=1}^nh[(N_n(v),v)_r^\theta],
\end{flalign*}
and the randomness of the rooted network $(N_n(v),v)_r^\theta$ is  the random vector \[(\theta_u^{(N_n(v),v)_r}(\cdot))_{u\in (N_n(v),v)_r},\mbox{ for $v\in [n].$}\] 

The strong solution assumption implies that for each $v\in [n]$ there exists a measurable function \[g_v:C([0,T];\R)^{|(N_n(v),v)_r|}\to \sC^* \mbox{ such that}\] 
\[g_v\left((B_z(\cdot))_{z\in (N_n(v),v)_r}\right)=(N_n(v),v)_r^\theta.\]

Therefore,
\begin{flalign*} 
U^{(r)}(N_n^{\theta})(h)&=\frac{1}{n}\sum_{v=1}^nh[(N_n(v),v)_r^\theta]\\
&=\frac{1}{n}\sum_{v=1}^nh\left(g_v\left((B_z(\cdot))_{z\in (N_n(v),v)_r}\right)\right)=:f(B_1(\cdot),\cdots,B_n(\cdot)).
\end{flalign*}

Now fix a vertex $w\in [n]$ and suppose we change the function $B_w(\cdot).$ Then  whenever $w\notin (N_n(v),v)_{r}$ the function $g_v\left((B_z(\cdot))_{z\in (N_n(v),v)_r}\right)$ is unchanged. That is, the only functions $g_v$ that are changed are those with $v \in (N_n(w),w)_{r}.$ Using that $\supnorm{h}\leq 1$ we can conclude that $f$ satisfy the bounded difference inequality (Theorem \ref{thm:azuma}) with \[c_w=\frac{|(N_n(w),w)_r|}{n}.\] From (\ref{eq:degree}) the size of $(N_n(w),w)_r$ is bounded by $n^{(r+1)\eps_n}$ (to see this write the ball as the union of spheres). In this way,
\[\til{\nu}:=\dfrac{1}{4}\sum_{w=1}^nc_w^2\leq \dfrac{1}{4}\sum_{w=1}^n\left(\frac{n^{(r+1)\eps_n}}{n}\right)^2=\frac{1}{4}\frac{n^{2(r+1)\eps_n}}{n}.\]

Theorem \ref{thm:azuma} applied twice implies
\[\Pr{\left|U^{(r)}(N_n^{\theta})(h)-\Ex{U^{(r)}(N_n^{\theta})(h)}\right|>t}\leq 2\expp{-\frac{2t^2n}{n^{2(r+1)\eps_n}}}.\]
This bound is summable in $n$ for any $r\geq 1$ and $t>0$ fixed because $\eps_n\to 0$. Therefore, Borel-Cantelli Lemma finishes the proof. \end{proof}

To continue, we must compare the truncated network in this Lemma with the original $N_n$. We first bound:
\[U(N_n^{\theta})(h) - U^{(r)}(N_n^{\theta})(h),\]
which is an average of differences:
\[h([N_n^{\theta}(v),v]) - h([N_n(v),v]_r^{\theta})\]
over $v\in [n]$. Since $\|h\|_{BL}\leq 1$, we have:
\[|h([N_n^{\theta}(v),v]) - h([N_n(v),v]_r^{\theta})|\leq \dist{\sC^*}{[N_n^{\theta}(v),v]}{[N_n(v),v]_r^{\theta}}\wedge 2.\]
Let $r_0$ be the constant of Lemma \ref{carne:thecorollary}. From this Lemma, we have the bound:
\begin{equation}\label{eq:carneapplied}
\dist{\sC^*}{[N_n^{\theta}(v),v]}{[N_n(v),v]_r^{\theta}}\leq \frac{1}{1+s}+C|\partial(N_n(v),v)_r|\exp(-(r-s)\log (r-s))
\end{equation} 
whenever $r\geq s+r_0.$
Averaging over $v\in[n]$, we conclude
\begin{equation}\label{eq:distruncated}|U^{(r)}(N_n^{\theta})(h)-U(N_n^{\theta})(h)|\leq \frac{1}{n}\sum_{v\in[n]}\left(\frac{1}{1+s} + C|\partial(N_n,v)_r|\,e^{-(r-s)\log (r-s)}\right)\wedge 2\end{equation}
for all $r\geq s+r_0$. 

We know $N_n$ converges in the local weak sense to $\nu$. The measure $\nu$ is supported on nice networks. That is, we have that $\nu(\cup_{a=1}^{+\infty}\sB^*_a)=1$ with
\begin{align}\label{eq:morenice}\sB^*_{a}:=\{(N,o)\in\sB^*: |\partial (N,o)_r|\leq ae^{ar},\forall r\geq 1\}.\end{align}
Since the $\sB_a^*$ are increasing, we have $\nu(\sB^*_a)\to 1$ as $a\to +\infty$. Also, by local weak convergence:
\[\frac{1}{n}\sum_{u\in[n]}\Ind{\{|\partial(N_n,u)_r|>ae^{ar}\}}\rightarrow \nu(\{([N,o]\,:\,|\partial(N,o)_r|>ae^{ar}\})= 1 - \nu(\sB^*_a).\]
We bound the terms in the RHS of \eqnref{distruncated} according to whether $|\partial(N_n,v)_r|$ is either bounded by $ae^{ar}$ or not; in the latter case the terms are simply bounded by $2$. We deduce that, when $n\to +\infty$:
\begin{eqnarray}\label{eq:distruncated2}|U^{(r)}(N_n^{\theta})(h)-U(N_n^{\theta})(h)| 
& \leq& \frac{1}{1+s}+Cae^{ar-(r-s)\log (r-s)} \\ \nonumber & & +\frac{2}{n}\sum_{u\in[n]}\Ind{\{|\partial(N_n,u)_r|>ae^{ar}\}}\\ \nonumber
&\to &\frac{1}{1+s}+Cae^{ar-(r-s)\log (r-s)} + 2(1 - \nu(\sB^*_a)).\end{eqnarray}
The same bound holds for the difference of the expectations \[|\Ex{U^{(r)}(N_n^{\theta})(h)-U(N_n^{\theta})(h)}|.\] Now, for each fixed $r,s,a$ as above:
\begin{eqnarray*}|U(N_n^{\theta})(h) - \Ex{U(N_n^{\theta})(h)}| &\leq & |U(N_n^{\theta})(h) - U^{(r)}(N_n^{\theta})(h)| \\ & & + |\Ex{U^{(r)}(N_n^{\theta})(h)-U(N_n^{\theta})(h)}| \\ & & + |U^{(r)}(N_n^{\theta})(h)-\Ex{U^{(r)}(N_n^{\theta})(h)}|.\end{eqnarray*}
When $n\to +\infty$, the first two terms in the RHS may be controlled via \eqnref{distruncated2}. The last term goes to $0$ by Lemma \ref{le:concentration}. We obtain:
\[\limsup_{n\to\infty}|U(N_n^{\theta})(h) - \Ex{U(N_n^{\theta})(h)}|\leq \frac{2}{1+s}+2Cae^{ar-(r-s)\log (r-s)} + 2(1 - \nu(\sB^*_a)).\]
We may now let $r\to +\infty$, then $s\to +\infty$ and $a\to+\infty$ (in this order) to obtain:  
\[\limsup_{n\to\infty}|U(N_n^{\theta})(h) - \Ex{U(N_n^{\theta})(h)}| \leq 0.\]
This implies our goal \eqnref{goalhydro} and finishes the proof of the hydrodynamic limit.






\section{Numerical Simulations for the Stochastic Kuramoto Model on Galton-Watson Trees}
\label{numeric}

\subsection{Introduction}

In this section, we explain how we performed the numerical simulations for the  stochastic Kuramoto model on  Galton- Watson (GW) trees. The equations are given by $$d\theta_i = \left (\omega_i + K \sum_{j=1}^N a_{ij} \sin(\theta_j - \theta_i)  \right)dt + \varepsilon dW^i_t.$$ For this system, $\theta_j(t)$ and $\omega_j$ represent the angular phase and natural frequency of the oscillator indexed by  $j \in \{1,2, \hdots, N\}$.  The parameter $K \in \mathbb{R_+}$ represents the coupling strength between nodes, and $a_{ij} =1$ if nodes $i$ and $j$ are connected or $a_{ij} =0$ otherwise. We assume that $\{W^j_t\}_{t\geq 0}$ are independent brownian motions for each node $j$, while the noise intensity is given by $\varepsilon>0$. In our simulations,  we chose two different models for generating the GW trees:

\begin{enumerate}
\item \textbf{Binomial model:} The offspring is a binomial random variable  with distribution $Bin(n,p)$.
\item \textbf{D-Regular model:} The root node has $\mathcal{C}$ children, while the other ones have exactly $\mathcal{C} -1 $ children.
\end{enumerate}

In what follows, we fixed  $n=3$ and $p = \frac{2}{3}$ for the Binomial model (mean offspring equals to 2) and  $\mathcal{C}=3$ for the D-Regular model. We also considered the time interval  $t=[0,100]$  (arbitrary units) with step $\Delta t = 0.01$, and  noise intensity $\varepsilon = 0.05$.

\subsubsection*{The Synchronization Level}

We defined the synchronization level $Sync = Sync(h,K)$ between the root and those nodes at a  distance $\emph{h}$,  given the coupling strength $K$.  If $N$ denotes the total number of nodes and $j =1$ is defined as the root index, we define the set $$ \mathcal{D}(h) = \left\{ j \in \{2,...,N\} | \text{ node  of index $j$ is at distance $h$ from the root} \right\}   $$ and the order parameter $$ r_{h}(t) = \left \vert \frac{e^{i \theta_1 (t)}+ \sum_{j \in \mathcal{D}(h)}^{} e^{i\theta_j (t)}}{1+ \#\mathcal{D}(h) } \right \vert$$ where $\#$ denotes set cardinality.

  Our variable $Sync$ is then defined as a time average of the last $5\%$ values assumed by $r_{h}(t)$. More precisely, if we have a total of $t_n$,  the set of last  $5\%$ time indexes is given by $\mathcal{J} = \left\{ \floor{0.95t_n},...,t_n\right\} .$ Therefore, we define
  $$Sync : = \frac{\sum_{j \in \mathcal{J}} r_h(t_j)}{\#\mathcal{J}} $$ as our synchronization level parameter.

\subsection{Numerical  Simulations}

We present the details of our numerical simulations, considering the Binomial and D-Regular models. In both cases, our goal is to calculate an average synchronization parameter $\langle Sync \rangle$ between the root and those nodes at different distance values $h$, for distinct coupling strength ($K$) values. For all simulations, we considered GW trees with 13 generations ($h  \in \{1,2,\cdots,13\}$), and $K \in [0,10]$ with steps $\Delta K = 0.2$. We performed all numerical solutions with the classic Euler-Maruyama scheme for stochastic differential equations. In what follows, we describe the step-by-step algorithms for each GW model.

\begin{itemize}
\item \textbf{Binomial model.} We produce a total of  100 simulations. In each of them, we generate a GW tree with $N$ nodes and define the natural frequencies $\omega_i$, as well as the initial conditions $\theta_i(0)$ ($i \in{1,2,\hdots,N}$). We then simulate the stochastic Kuramoto model with the Euler-Maruyama scheme, computing $Sync(h,K)$ for the chosen $h$ and $K$ values (see above).  Finally, we average the synchronization levels  across the 10  simulations, obtaining  $\langle Sync \rangle (h,K)$.

\item \textbf{D-Regular model.} In this case, the GW is not random. Then we produce a total of  100 simulations, only re-sampling the initial conditions $\theta_i(0)$ and natural frequencies $\omega_i$ ($i \in{1,2,\hdots,N}$). For each simulation, we compute the synchronization level ($Sync(h,K)$) for the selected $K$ and $h$ values. We then compute the average synchronization level $\langle Sync \rangle (h,K)$ across the 100 simulations. 
\end{itemize}

\subsection{Results}

We present our  results in   Fig. \ref{Big} \textbf{a},\textbf{b}. We compared two possible cases for the initial phases:   $\theta_i(0) = 0$  or  $\theta_i(0)$ uniformly distributed on $[0,2\pi]$, for $i \in \{1,2,\hdots,N\}$. For the natural frequencies, we chose $\omega_i = 1$, or $\omega_i$ normally distributed with mean and variance equal to 1, or  $\omega_i$  uniformly distributed on $[0,2\pi]$,  for $i \in \{1,2,\hdots,N\}$.

For both Binomial and D-Regular GW models, the average level of synchronization $\langle Sync \rangle$ significantly decreased, as the distance  from the root  $h$ increased, for all considered choices of initial conditions $\theta_i(0)$ and natural frequencies $\omega_i$. The parameter regions of high synchronization (red-colored in the heatmaps, which correspond to $\langle sync \rangle >0.8 $) were only found for low $h$ values ($h\leq 4$ in the binomial model and $h\leq 2$ in the D-regular model). It is important to notice that the synchronization levels did not substantially increased even for higher values of the coupling strength $K$, which in our study was allowed to range from $0$ to $10$. This result indicates that desynchronization may be a predominant phenomenon in GW trees with the chosen average offspring values.

From the GW model perspective, a simple visual inspection indicates that the Binomial model promoted a higher synchronization for lower $h$ values than the D-Regular. For each one of the five combinations of the initial condition and natural frequencies, the comparison between models shows a significant higher synchronization in the binomial model. Further analysis would be required for us to understand the causes of such effect. However, we hypothesize that the variability in the offspring number that is present in the binomial model could be a major factor to explain this increase in synchronization.

\begin{figure}
	\centering
	\includegraphics[scale=0.5]{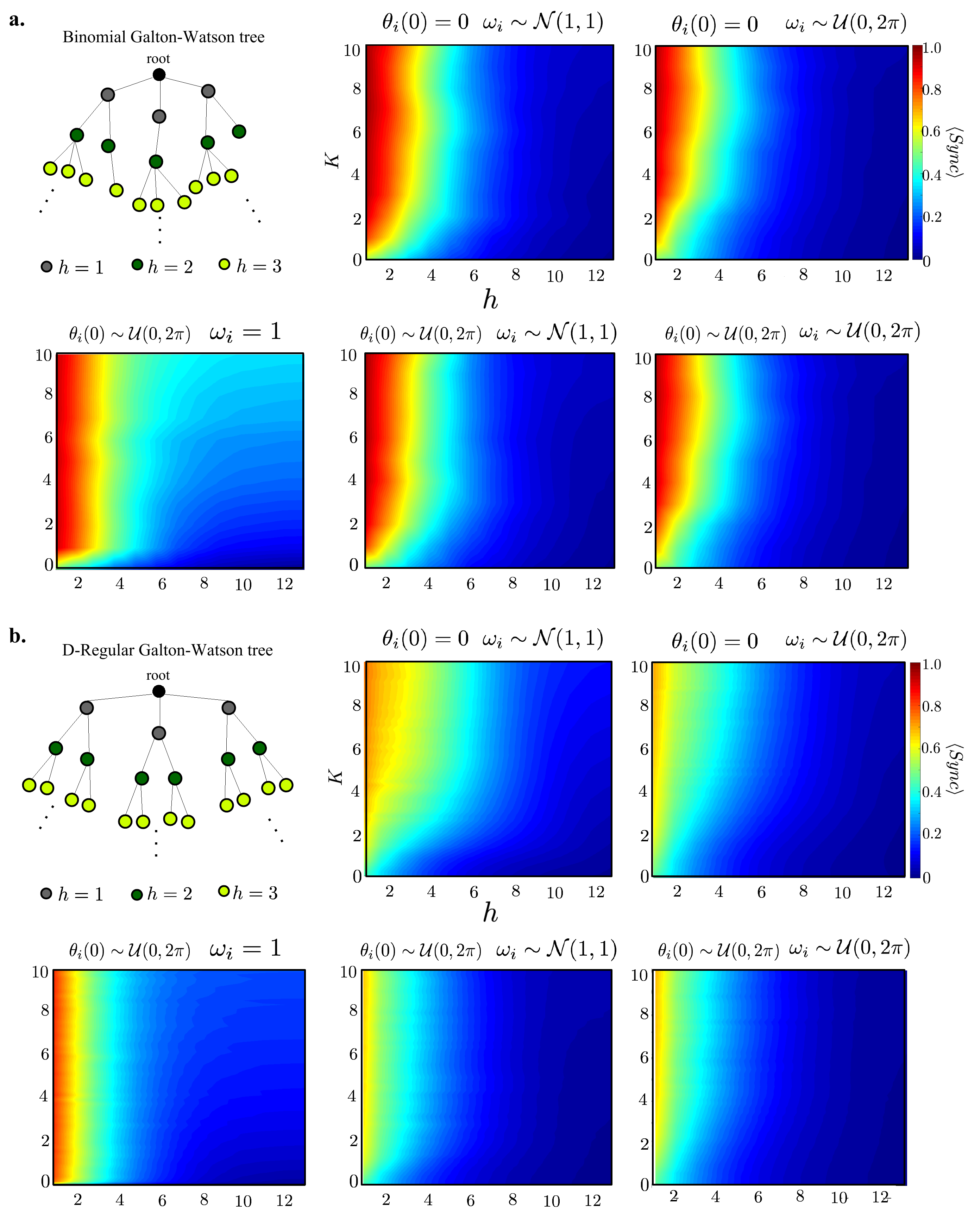}
	\label{Big}
\end{figure}

\newpage 
\noindent
\small{\textbf{Figure 1. Stochastic Kuramoto Dynamics in GW trees: average synchronization levels between the root and different nodes.}  \textbf{a.}  In the Binomial model, the offspring was given by a binomial random variable  with distribution $Bin(3,\frac{2}{3})$. \textbf{b.}  In the D-Regular model, the root has 3 children, while the other nodes have only 2 children.  In the top left of Figures \textbf{a.}  and  \textbf{b.}, we exhibit  schematic illustrations of both GW tree models, with nodes at different distances from the root. The parameter $K$ represents the coupling strength  between any two connected nodes (see text for details).  We assumed two distinct distributions for the initial phases $\theta_i(0)$ (all equal or uniformly distributed), while the natural frequencies $\omega_i$  were assumed equal to 1, or normally distributed with mean and variance equal to 1, or uniformly distributed on $[0,2\pi]$.  For different $K$ values, we estimated the loss of synchronization between the root vertex and those at different distances $h$, as $h$ increased from 1 to 13.  We observed clear transition patterns between high and low synchronization regions in the $h \times K$ plane.}






\appendix

\section{Weak local convergence and nice networks}\label{app:examples}

The goal of this section is to prove Proposition \ref{prop:mcontinuity}, Theorem \ref{theo:lwlexamples} and show that we have examples that satisfy the assumptions of Theorem \ref{theo:hydlimit} (cf. Remark \ref{rem:nicenet}).

\subsection{Proof of Proposition \ref{prop:mcontinuity}}
\label{proof:prop:mcontinuity}
Proposition \ref{prop:mcontinuity} is a simpler version of Theorem \ref{theo:exttheta}.

Start with a sequence of rooted graphs $\{(G_k,o_k)\}_{k\in\N}$ that converges in the local topology to $(G,o)\in\sG^*.$ We want to show that the respective laws of the random rooted networks, $(N^{G_k},o_k)$ also converges to the law of $(N^G,o)$, that is, $\sM(G_k,o_k)\to\sM(G,o)$ in the BL-distance.

\ignore{
The idea of the proof is the following. Two rooted graphs are close in the local topology if they are isomorphic for a large ball. Then  we can couple  the marks to be equal in this large ball to see that the networks are also close.
}
From the convergence $(G_k,o_k)\to (G,o)$ in $\sG^*$ we know that for any $r\in\N$ fixed $(G_k,o_k)_r=(G,o)_r$ for $k$ sufficiently large.

When we construct the random networks $N^G$ and $N^{G_k}$ we can couple the marks to be equal in $(G_k,o_k)_r=(G,o)_r$. In particular, $(N^{G_k},o_k)_r=(N^{G},o)_r$ almost surely.

Therefore, by definition and almost surely
\[\dist{\sN^*}{[N^G,o]}{[N^{G_k},o_k]}\leq \frac{1}{1+r}\]
for $k$ big enough. This bound immediately translates to a bound for their laws and we finish the proof.

\subsection{Proof of Theorem \ref{theo:lwlexamples}}
\label{proof:theo:lwlexamples}
Theorem \ref{theo:lwlexamples} is a simpler version of Theorem \ref{theo:hydlimit}.

We have a sequence of graphs $\{G_n\}_{n\in\N}$ that converges locally weakly to the measure $\rho\in\sG^*$. We want to show that defining the random networks $N_n:=N^{G_n}$ by adding the i.i.d. marks, we have that $U(N_n)\to\rho\sM$ almost surely in $\sP(\sN^*).$ We are assuming that $\max_{v\in [n]}d_v^{G_n}\leq n^{\eps_n}$ with $\eps_n\to 0$ as $n\to\infty.$

Let $\Ex{\,\cdot\,}$ be the expectation in the space where the marks of the sequence of the networks $(N_n)_{n\in\N}$ can be defined. We can easily check that \[\Ex{U(N_n)}=U(G_n)\sM.\] In particular, from the continuity of $\sM$ it is simple to deduce that $\Ex{U(N_n)}\to\rho\sM$.

So we just need to show that almost surely
\[\lim_{n\to\infty}\left|U(N_n)(h)-\Ex{U(N_n)(h)}\right|=0\]
for any borel measurable function $h:\sN^*\to\R$ with $\|h\|_{BL}\leq 1.$

The idea, as in Theorem \ref{theo:hydlimit}, is to concentrate $U(N_n)_r$ around its mean, where
\begin{align}\label{eq:restricted}
U(N_n)_r=\frac{1}{n}\sum_{v=1}^n\delta_{[N_n(v),v]_r}.
\end{align}

Throughout the remainder of this section we have a bounded measurable test function $h:\sN^*\to\R$ with $\blnorm{h}\leq 1.$

The same arguments in Section \ref{proof:prop:mcontinuity} says that
\[|U(N_n)(h)-U(N_n)_r(h)|\leq \frac{1}{1+r}\]
that also holds in mean:
\[|\Ex{U(N_n)(h)}-\Ex{U(N_n)_r(h)}|\leq \frac{1}{1+r}.\]

Now we will use Azuma's inequality (cf. Theorem \ref{thm:azuma}) to show that $U(N_n)_r(h)$ is concentrated around its mean.
From (\ref{eq:restricted})
\[U(N_n)_r(h)=\frac{1}{n}\sum_{v=1}^nh([N_n(v),v]_r)\]
that is a function of the independent variables $(\omega_v,\theta_v(0))_{v\in [n]}$ and $(\mu_e)_{e\in E_n},$ where $E_n$ is the edge set of $G_n$.

If we change one of these marks on the vertex $w$, the only networks that are changed are those $[N_n(v),v]_r$ with $w \in (G_n(v),v)_r$, that is, $v \in (G_n(w),w)_r$. Therefore, $U(N_n)_r(h)$ changes by at most the size of $(G_n(w),w)_r$ over $n$. By assumption $|(G_n(w),w)_r|\leq n^{(r+1)\eps_n}.$ Observe that $U(N_n)(h)$ is a function of $2n+E_n$ independent random variables. 

From Azuma inequality we have that
\[\Pr{|U(N_n)_r(h)-\Ex{U(N_n)_r(h)}|>t}\leq 2\expp{-\frac{2t^2n^2}{(2n+E_n)n^{2(r+1)\eps_n}}}.\]

To see that this bound is summable we use that the number of edges is one half of the sum of degrees to bound: \[E_n\leq \frac{1}{2}n\max_{v\in [n]}d_v^{G_n}\leq\frac{1}{2} nn^{\eps_n}.\]
Using the triangle inequality several times and the previous bounds we see that \[\lim_{n\to\infty}|U(N_n)(h)-\Ex{U(N_n)(h)}|=0\]
for any $h$ with $\blnorm{h}\leq 1.$
\subsection{Good examples}\label{sec:goodexamples}

In this section we want to see that we have examples that satisfy the assumptions in Theorem \ref{theo:hydlimit}. 

From Theorem \ref{theo:lwlexamples}, the Assumption \ref{item:lwc} in the Theorem \ref{theo:hydlimit} is satisfied almost surely for any of the graphs in Example \ref{exe:cycle}, \ref{exe:er} and \ref{exe:degree} when we add i.i.d. marks.

The Assumption \ref{item:degree} of Theorem \ref{theo:hydlimit} is satisfied trivially for graphs with bounded degree (Example \ref{exe:cycle}). We impose in Example \ref{exe:degree} that uniformly graphs with a given degree sequence also satisfy $\max_{v\in [n]}d_v^{G_n}\leq n^{\eps_n}$ with $\eps_n\to 0$ as $n\to\infty.$

Now we will check that $\ER$ graphs (Example \ref{exe:er}) also satisfy Assumption \ref{item:degree} of Theorem \ref{theo:hydlimit}, almost surely.

By the union bound, and Chernoff bound (Theorem 2.3.1, \cite{Vershynin})
\[\Pr{\max_{v\in [n]}d_v^{G_n}>n^{\eps_n}}\leq n\Pr{d_1^{G_n}>n^{\eps_n}}\leq \expp{-n^{\eps_n}\log n^{\eps_n}}\]
for $n$ big enough. Since this bound is summable (we can impose $n^{\eps_n}\to \infty$ as $n\to\infty$), a sequence of $\ER$ graphs satisfy the assumption almost surely.

It remains to see that in the Examples \ref{exe:cycle}, \ref{exe:er} and \ref{exe:degree}, the local weak limit is supported in nice graphs, that is, the local weak limit of theses sequences are supported in rooted graphs that satisfy
\begin{align}\label{eq:nicegraphs}
|\partial (G,o)_r|\leq ae^{ar}, \mbox{ for some }a>0.
\end{align}

It is clear that (\ref{eq:nicegraphs}) is achieved when the local weak limit measure is supported in graphs with uniform bounded degree. So we are done with Example \ref{exe:cycle}.

Now we will check (\ref{eq:nicegraphs}) for the GW tree. This will show that we are done with Examples \ref{exe:er} and \ref{exe:degree} because a Unimodular GW tree is equal to a GW tree after the first generation.

Given a probability distribution $P$ on $\mathbb{N}$ with mean $\mu \in (0,+\infty)$ consider $GW(P)$ the GW distribution on the set of rooted trees. It means that in each generation each individual has $i$ children with probability $P(i)$ independent of the other individuals. Let $Z_n$ be the number of individuals at generation $n$, $Z_0=1.$ We know that $Z_n/(\mu)^n$ is a martingale. In particular $\Ex{Z_n}=\mu^n.$ But we also have that $|\partial (G,o)_n|=Z_n$ if $(G,o)$ has distribution $GW(P).$

Therefore,
\begin{eqnarray}
\Pr{|\partial (G,o)_r|\geq (2\mu)^r} \leq \dfrac{\Ex{|\partial (G,o)_r|}}{(2\mu)^r}=\dfrac{1}{2^r}. \nonumber
\end{eqnarray}
which is summable in $r$. Therefore, $|\partial (G,o)_r|\leq e^{(\log 2\mu)r}$ for $r$ big enough, almost surely. Choosing $a$ big enough we see that $|\partial (G,o)_r|\leq ae^{ar}$ for any $r\geq 1$ and for almost all realizations of $(G,o).$

\section{Appendix -  Continuity for finite graphs}
\label{sec:limfinitenet}

The goal of this section is to justify (\ref{eq:lim-finitenet}):
\[d_{BL}(\ovl{\Theta}[N,o]_r,\ovl{\Theta}[N_n,o_n]_r)\to 0.\]
We think that this can be derived from the standard theory of Ordinary Differential Equations. For completeness we provide a prove in this section. Our notation follows that one in Section \ref{sub:prooftheo}.

We will use the following Lemma.

\begin{lemma}[Proof in Section \ref{sec:samegraph}]\label{le:samegraph} Consider a graph $G$ and two {\em finite} networks $N_i=(G,\vecg{\mu}^{(i)},\vecg{\omega}^{(i)},\vecg{\theta}^{(i)}(0))\in \sN$, $i=1,2.$ Suppose that all marks are close by $\eps:$ \[\sup_{vu \in E_G}|\mu_{vu}^{(2)}-\mu_{vu}^{(1)}|\leq \eps,\ \ \sup_{v\in V_G}|\omega_v^{(2)}-\omega_v^{(1)}|\leq \eps, \mbox{ and } \sup_{v\in V_G}|\theta_v(0)^{(2)}-\theta_v(0)^{(1)}|\leq \eps.\]
Then there exists a constant $\ovl{C}$ depending on $\blnorm{\phi}$, $\pnorm{Lip}{\psi},$ $T$, and $N$ such that almost surely with respect to the law of the Brownian motions
\[\sup_{v\in G}\sup_{t\leq T}\left(|\theta_v^{N_1}(t)-\theta_v^{N_2}(t)|\right)\leq \ovl{C}e(G)\eps.\]
\end{lemma}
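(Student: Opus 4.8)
The plan is to run the same linear Gronwall scheme as in the proof of the Locality Lemma (Proposition~\ref{prop:carne1}); the only genuinely new ingredient is a perturbation estimate for the normalized weight matrices. Abbreviate $\theta^{i}_v:=\theta^{N_i}_v$ and, for $i=1,2$, set $P^{(i)}_{uv}:=\frac{\mu^{(i)}_{uv}}{\mu^{(i)}_v}\Ind{\{\mu^{(i)}_v>0\}}$. Since both systems are driven by the \emph{same} Brownian motions, the martingale parts cancel upon subtracting the integral forms of (\ref{eq:sde}), so that for every $v$ and every $t\le T$,
\[
\theta^{1}_v(t)-\theta^{2}_v(t)=\big(\theta_v(0)^{(1)}-\theta_v(0)^{(2)}\big)+\int_0^t\big(\Delta^P_v(s)+\Delta^\phi_v(s)+\Delta^\psi_v(s)\big)\,{\rm d}s,
\]
where, exactly as in Proposition~\ref{prop:carne1}, $\Delta^P_v$ isolates the effect of replacing $P^{(1)}$ by $P^{(2)}$ (with the arguments of $\phi$ frozen at the $N_1$-values), $\Delta^\phi_v$ the effect of changing those arguments, and $\Delta^\psi_v$ is the $\psi$-term. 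The initial-condition contribution is at most $\eps$ by hypothesis.

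I would bound the two ``argument'' terms by the Lipschitz hypotheses on $\phi,\psi$ together with $\sum_u P^{(2)}_{uv}\le 1$ and $|\omega^{(1)}_v-\omega^{(2)}_v|\le\eps$:
\[
|\Delta^\phi_v(s)|\le \pnorm{Lip}{\phi}\sum_{u}P^{(2)}_{uv}\big(|\theta^1_u(s)-\theta^2_u(s)|+|\theta^1_v(s)-\theta^2_v(s)|+2\eps\big),\qquad |\Delta^\psi_v(s)|\le \pnorm{Lip}{\psi}\big(|\theta^1_v(s)-\theta^2_v(s)|+\eps\big).
\]
The crux is $\Delta^P_v$, which is controlled by $\supnorm{\phi}\sum_u|P^{(1)}_{uv}-P^{(2)}_{uv}|$. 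For non-isolated $v$ (isolated $v$ have no interaction term and nothing to prove) I would write
\[
P^{(1)}_{uv}-P^{(2)}_{uv}=\frac{\mu^{(1)}_{uv}\big(\mu^{(2)}_v-\mu^{(1)}_v\big)+\mu^{(1)}_v\big(\mu^{(1)}_{uv}-\mu^{(2)}_{uv}\big)}{\mu^{(1)}_v\mu^{(2)}_v},
\]
and then, using $|\mu^{(1)}_{uv}-\mu^{(2)}_{uv}|\le\eps$, $|\mu^{(2)}_v-\mu^{(1)}_v|\le d_v\eps$ and $\sum_u\mu^{(1)}_{uv}=\mu^{(1)}_v$, the numerator telescopes to give
\[
\sum_u|P^{(1)}_{uv}-P^{(2)}_{uv}|\le \frac{2d_v\,\eps}{\mu^{(2)}_v}.
\]
This is precisely where the weights must stay away from $0$: since all marks are within $\eps$, for $\eps$ small (below a threshold depending on $N$) every non-isolated $v$ satisfies $\mu^{(2)}_v\ge\mu_*(N_1)/2>0$, so $\sum_u|P^{(1)}_{uv}-P^{(2)}_{uv}|\le c\,d_v\,\eps$ for a constant $c=c(N)$.

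Collecting these estimates and writing $D_v:=|\theta^1_v-\theta^2_v|$, I would arrive at the finite-dimensional linear integral inequality
\[
D_v(t)\le \eps+\int_0^t\Big(c\,\supnorm{\phi}\,d_v\,\eps+\pnorm{Lip}{\phi}\sum_{u}P^{(2)}_{uv}\big(D_u(s)+D_v(s)\big)+\big(2\pnorm{Lip}{\phi}+\pnorm{Lip}{\psi}\big)\eps+\pnorm{Lip}{\psi}D_v(s)\Big)\,{\rm d}s,
\]
to which the Linear Gronwall inequality (Corollary~\ref{gr:3rd}) applies. Bounding every $D_u(s),D_v(s)$ by $M(s):=\max_w\sup_{\tau\le s}D_w(\tau)$ and using $\sum_uP^{(2)}_{uv}\le1$ reduces this to a scalar Gronwall inequality for $M$, yielding $\sup_v\sup_{t\le T}D_v(t)=M(T)\le\ovl{C}\eps$ with $\ovl{C}$ depending only on $\supnorm{\phi}$, $\pnorm{Lip}{\phi}$, $\pnorm{Lip}{\psi}$, $T$, $\max_v d_v$ and $\mu_*(N_1)$. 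This already implies the claimed bound; the explicit factor $e(G)$ is the natural output of the alternative route in which one sums the inequality over $v$, since the inhomogeneous term becomes $\sum_v c\,\supnorm{\phi}\,d_v\,\eps=2c\,\supnorm{\phi}\,e(G)\,\eps$ by the handshake identity $\sum_v d_v=2e(G)$.

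The only real obstacle is the matrix perturbation estimate of the second step: the normalization $v\mapsto1/\mu_v$ is Lipschitz only away from $0$, so the positivity of the edge weights (hence, on a finite network, a uniform lower bound) is essential, and it is exactly this that forces $\ovl{C}$ to depend on $N$. Everything else is the routine Lipschitz-and-Gronwall bookkeeping already carried out for Proposition~\ref{prop:carne1}.
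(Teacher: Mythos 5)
Your proof is correct in substance, and its first half coincides with the paper's own argument: the paper likewise subtracts the two integral equations (same Brownian motions, so the martingale parts cancel), splits the drift difference into three terms with exactly your Lipschitz bounds, and proves the same row-sum perturbation estimate $\sum_u|P^{(1)}_{uv}-P^{(2)}_{uv}|\leq 2d_v\eps/\mu^{(2)}_v$ (Proposition \ref{prop:deltasame}); your telescoping identity is just a repackaging of that computation. The one place you should tighten the argument is the denominator. The paper kills the factor $d_v$ outright via $d_v\,\mu_*\leq \mu_v$ (every edge weight is at least $\mu_*$), so its constant involves $1/\mu_*$ of the second network; you instead keep $c\,d_v\,\eps$ and lower-bound $\mu^{(2)}_v\geq \mu_*(N_1)/2$, which has the advantage that your constant depends only on $N_1$ (this is what the application in Section \ref{sec:limfinitenet} actually needs, since there $N_2=(N_n,o_n)_r$ varies with $n$), but it restricts you to $\eps$ below a threshold. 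Close that loophole in one line: for $\eps$ above the threshold, boundedness of $\phi$ and a scalar Gronwall bound using only the $\psi$ term give $\sup_{v,t}|\theta^{1}_v(t)-\theta^{2}_v(t)|\leq \bigl(\eps+2\supnorm{\phi}T+\pnorm{Lip}{\psi}T\eps\bigr)e^{\pnorm{Lip}{\psi}T}\leq \ovl{C}(N_1)\,\eps$, so the claim is trivial there.

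The genuine divergence is the endgame. The paper feeds the vector inequality into the matrix Gronwall bound (Corollary \ref{gr:3rd}) and then analyzes $\expp{tC(\oP+I)}$ by recognizing $e^{-s}\expp{s\oP}_{vw}=\ovl{\mu}_w\,q_s(v,w)$ as the heat kernel of a continuous-time random walk on the weighted graph, so that $q_s\leq 1$ and $\sum_w\ovl{\mu}_w\leq\mu^*\sum_w d_w=2\mu^*e(G)$ produce the stated $e(G)$ factor — the same device as in the proof of Lemma \ref{carne:thecorollary}. You instead collapse the system to a scalar Gronwall inequality for $M(s)=\max_w\sup_{\tau\leq s}D_w(\tau)$, using $\sum_u P^{(2)}_{uv}\leq 1$; this is more elementary (no heat kernel, and in fact no need for Corollary \ref{gr:3rd} at all) and yields the formally stronger conclusion $\ovl{C}\eps$ with $\ovl{C}$ depending on $\max_v d_v$, which implies the stated bound whenever $e(G)\geq 1$. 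Since the constant is allowed to depend on $N$, nothing is lost; what the paper's heat-kernel route buys is a constant with explicit dependence on $\mu^*$, $T$ and the Lipschitz norms only, with all graph dependence isolated in the single factor $e(G)$. (Both arguments tacitly assume $G$ has at least one edge: for $e(G)=0$ the claimed right-hand side vanishes while the left-hand side need not, so your proof is no worse off than the paper's on that edge case.)
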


From the convergence of $[N_n,o_n]$ to $[N,o]$ we know that for any $r$ and for $n$ large enough \[[N_n,o_n]_r=[N,o]_r.\]

So we can apply Lemma \ref{le:samegraph} with $N_1=(N,o)_r$, $N_2=(N_n,o_n)_r,$ and \[\eps=\dist{\sN^*}{[N,o]_r}{[N_n,o_n]_r}\] to obtain the bound
\begin{align}\label{eq:bounde}
\sup_{v\in (N,o)_r}\sup_{t\leq T}|\theta_v^{(N,o)_r}(t)-\theta_v^{(N_n,o_n)_r}(t)|\leq \ovl{C}e([N,o]_r)\dist{\sN^*}{[N,o]_r}{[N_n,o_n]_r}.
\end{align}

In that way, we use that the networks coincide at any radius and the bound in (\ref{eq:bounde}) to conclude that almost surely (cf. (\ref{def:distN}))
\[\dist{\sC^*}{[N,o]^{\theta}_r}{[N_n,o_n]^{\theta}_r}\leq \ovl{C}e([N,o]_r)\dist{\sN^*}{[N,o]_r}{[N_n,o_n]_r}\]
and the RHS of the last bound goes to zero as $n\to \infty.$ This is enough to finish.

\subsection{Proof of Lemma \ref{le:samegraph}}\label{sec:samegraph}
In this section, we prove Lemma \ref{le:samegraph}. The prove follows similar ideas of the proof of Lemma \ref{carne:thecorollary}.

For ease of notation we adopt the following conventions.
\begin{definition}\label{def:sameg:conv}
\begin{enumerate}
\item For the objects related to $N_2$ we omit the superscripts and just write $\vecg{\mu},$ $\vecg{\omega}$, $\vecg{\theta}(0)$ and $\vecg{\theta}.$
\item For the objects related to $N_1$ we omit the superscript and write a over-line writing  $\ovl{\vecg{\mu}},$ $\ovl{\vecg{\omega}}$, $\ovl{\vecg{\theta}}(0)$ and $\ovl{\vecg{\theta}}.$
\item We also write $P_{uv}=\mu_{uv}/\mu_v$ and $\ovl{P}_{uv}=\ovl{\mu}_{uv}/\ovl{\mu}_v$.
\item We write $I$ for the identity matrix.
\item $G$ has vertex set $V$ and edge set $E.$
\end{enumerate}   
\end{definition}

From Definition \ref{def:supersparseID} and for  $v\in V$, we have that
\begin{align} \label{eq:sdesame}
{\rm d}\theta_{v}(t)= \sum_{u \in V}P_{uv}(\phi(\theta_{u}(t),\theta_{v}(t);\omega_v,\omega_u){\rm d}t +\psi(\theta_{v}(t);\omega_v){\rm d}t+ {\rm d}B_v(t)
\end{align}
and the analogous formula holds for $\ovl{\theta}_v(\cdot)$ using the objects of $N_1$ but using the same Brownian motion.

 We suppose that the Brownian motions are coupled to be equal in each system.
\begin{proposition}\label{prop:deltasame} With our conventions, there exists a constant $C$ depending on $\blnorm{\phi}$, $\pnorm{Lip}{\psi}$, $T$, and $1/\mu_*$ such that 
\[|\theta_v(t)-\ovl{\theta}_v(t)|\leq C\left(\eps +\int_0^t\sum_{u\in V}(\ovl{P}+I)_{uv}|\theta_u(s)-\ovl{\theta}_u(s)|{\rm d}s\right)\]
for all $v\in V.$
\end{proposition}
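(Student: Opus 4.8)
The plan is to mirror the proof of Proposition~\ref{prop:carne1}, the only new feature being that the two systems now live on the \emph{same} graph but carry marks differing by at most $\eps$, so that coefficient discrepancies are controlled by $\eps$ rather than being purely structural. First I would subtract the integral form of equation~\eqref{eq:sdesame} for $\theta_v$ from the analogous one for $\ovl{\theta}_v$. Since the two systems are driven by the same Brownian motion, the stochastic integrals cancel, leaving
\[\theta_v(t)-\ovl{\theta}_v(t) = \big(\theta_v(0)-\ovl{\theta}_v(0)\big) + \int_0^t\big(\Delta_1(v,s)+\Delta_2(v,s)+\Delta_3(v,s)\big){\rm d}s,\]
where, after adding and subtracting $\sum_u\ovl{P}_{uv}\phi(\theta_u,\theta_v;\omega_v,\omega_u)$,
\[\Delta_1(v,s)=\sum_{u\in V}(P_{uv}-\ovl{P}_{uv})\,\phi(\theta_u(s),\theta_v(s);\omega_v,\omega_u),\]
\[\Delta_2(v,s)=\sum_{u\in V}\ovl{P}_{uv}\big(\phi(\theta_u(s),\theta_v(s);\omega_v,\omega_u)-\phi(\ovl{\theta}_u(s),\ovl{\theta}_v(s);\ovl{\omega}_v,\ovl{\omega}_u)\big),\]
and $\Delta_3(v,s)=\psi(\theta_v(s);\omega_v)-\psi(\ovl{\theta}_v(s);\ovl{\omega}_v)$. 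The initial-condition term is bounded by $\eps$ directly from the hypothesis.

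The main obstacle, and the only place where the dependence on $1/\mu_*$ enters, is the bound on $\sum_{u}|P_{uv}-\ovl{P}_{uv}|$ appearing in $\Delta_1$. I would write
\[\frac{\mu_{uv}}{\mu_v}-\frac{\ovl{\mu}_{uv}}{\ovl{\mu}_v}=\frac{\mu_{uv}-\ovl{\mu}_{uv}}{\mu_v}+\ovl{\mu}_{uv}\,\frac{\ovl{\mu}_v-\mu_v}{\mu_v\ovl{\mu}_v},\]
sum over the at most $d_v$ neighbours $u$ of $v$, and use $|\mu_{uv}-\ovl{\mu}_{uv}|\le\eps$ together with $|\mu_v-\ovl{\mu}_v|\le\sum_u|\mu_{uv}-\ovl{\mu}_{uv}|\le d_v\eps$ and $\sum_u\ovl{\mu}_{uv}=\ovl{\mu}_v$. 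This yields $\sum_u|P_{uv}-\ovl{P}_{uv}|\le 2d_v\eps/\mu_v$. Since every edge weight of $N_2$ is at least $\mu_*$, one has $\mu_v\ge d_v\mu_*$, hence $\sum_u|P_{uv}-\ovl{P}_{uv}|\le 2\eps/\mu_*$ (and the expression vanishes when $\mu_v=0$). As $\phi$ is bounded, this gives $|\Delta_1(v,s)|\le 2\supnorm{\phi}\,\eps/\mu_*$, a constant multiple of $\eps$.

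For $\Delta_2$ and $\Delta_3$ I would use only the Lipschitz property, being careful that the arguments differ both in the $\theta$-variables and in the media variables. Since $|\omega_v-\ovl{\omega}_v|\le\eps$, the Lipschitz bound on $\phi$ gives $|\phi(\theta_u,\theta_v;\omega_v,\omega_u)-\phi(\ovl{\theta}_u,\ovl{\theta}_v;\ovl{\omega}_v,\ovl{\omega}_u)|\le\pnorm{Lip}{\phi}(|\theta_u-\ovl{\theta}_u|+|\theta_v-\ovl{\theta}_v|+2\eps)$; using $\sum_u\ovl{P}_{uv}\le 1$ this produces $|\Delta_2(v,s)|\le\pnorm{Lip}{\phi}\big(\sum_u\ovl{P}_{uv}|\theta_u-\ovl{\theta}_u|+|\theta_v-\ovl{\theta}_v|+2\eps\big)$, and similarly $|\Delta_3(v,s)|\le\pnorm{Lip}{\psi}(|\theta_v-\ovl{\theta}_v|+\eps)$. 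Finally I would collect terms: the constant-in-$s$ contributions proportional to $\eps$ integrate to at most $C\eps$ over $[0,t]\subset[0,T]$, while the remaining integrands combine, after writing the self-term $|\theta_v-\ovl{\theta}_v|$ as the diagonal contribution $\sum_u I_{uv}|\theta_u-\ovl{\theta}_u|$, into $C\sum_{u\in V}(\ovl{P}+I)_{uv}|\theta_u(s)-\ovl{\theta}_u(s)|$. Taking $C$ to be the maximum of the constants appearing above (which depends only on $\supnorm{\phi}$, $\pnorm{Lip}{\phi}$, $\pnorm{Lip}{\psi}$, $T$ and $1/\mu_*$) yields exactly the claimed inequality, ready to be fed into the linear Gronwall inequality.
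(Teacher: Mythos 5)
Your proposal is correct and follows essentially the same route as the paper's proof: the identical decomposition into $\Delta_1,\Delta_2,\Delta_3$ with cancellation of the common Brownian motions, the same splitting $\frac{\mu_{uv}}{\mu_v}-\frac{\ovl{\mu}_{uv}}{\ovl{\mu}_v}=\frac{\mu_{uv}-\ovl{\mu}_{uv}}{\mu_v}+\ovl{\mu}_{uv}\frac{\ovl{\mu}_v-\mu_v}{\mu_v\ovl{\mu}_v}$ leading to $\sum_u|P_{uv}-\ovl{P}_{uv}|\leq 2\eps d_v/\mu_v\leq 2\eps/\mu_*$ via $d_v\mu_*\leq\mu_v$, and the same Lipschitz bounds on $\Delta_2,\Delta_3$ absorbing the $\eps$-discrepancy in the media variables. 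No gaps; the dependence of your constant on $\supnorm{\phi}$ and $\pnorm{Lip}{\phi}$ matches the paper's stated dependence on $\blnorm{\phi}$.
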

\begin{proof}
From \ref{eq:sdesame},
\begin{eqnarray}
\theta_v(t)-\ovl{\theta}_v(t)=\theta_v(0)-\ovl{\theta}_v(0)+\int_{0}^{t} \Delta_1(v,s)+\Delta_2(v,s)+\Delta_3(v,s) {\rm d}s \nonumber
\end{eqnarray}
where
\begin{eqnarray}
\Delta_1(v,s)&:=&\sum_{u\in V}(P_{uv}-\ovl{P}_{uv})\phi(\theta_u(s),\theta_v(s);\omega_u,\omega_v),\nonumber \\
\Delta_2(v,s)&:=&\sum_{u\in V}\oP_{uv}(\phi(\theta_u(s),\theta_v(s);\omega_u,\omega_v)-\phi(\ovl{\theta}_{u}(s),\ovl{\theta}_{v}(s);\ovl{\omega}_u,\ovl{\omega}_v)),\nonumber \\
\Delta_3(v,s)&:=&\psi(\theta_v(s);\omega_v)-\psi(\ovl{\theta}_v(s);\ovl{\omega}_v).\nonumber
\end{eqnarray}
and the following bounds hold
\begin{eqnarray}
|\theta_v(0)-\ovl{\theta}_v(0)|&\leq &\eps \nonumber \\
|\Delta_1(v,s)|&\leq &\supnorm{\phi}\sum_{u\in V}|P_{uv}-\ovl{P}_{uv}|,\nonumber \\
|\Delta_2(v,s)|&\leq &\sum_{u=1}^n\oP_{uv}\pnorm{Lip}{\phi}(|\theta_v(s)-\ovl{\theta}_v(s)|+|\theta_u(s)-\ovl{\theta}_u(s)|+2\eps)\, \mbox{, and} \nonumber \\
|\Delta_3(v,s)|&\leq &\pnorm{Lip}{\psi}(|\theta_v(s)-\ovl{\theta}_v(s)|+\eps).\nonumber
\end{eqnarray}

Now we estimate $S_v:=\sum_{u\in V}|P_{uv}-\ovl{P}_{uv}|$. Remember that $\sup_{vu \in E}{|\mu_{vu}-\ovl{\mu}_{vu}|}\leq \eps.$ It is clear that $S_v=0$ if $v$ is an isolated vertex. If $v$ is not isolated, then
\begin{flalign*}
S_v&=\sum_{u\in V}\modulo{\frac{\mu_{uv}}{\mu_v}-\frac{\ovl{\mu}_{uv}}{\ovl{\mu}_v}}\\
&\leq \sum_{u\in V}\frac{1}{\mu_v}|\mu_{uv}-\ovl{\mu}_{uv}|+\ovl{\mu}_{uv}\modulo{\frac{\ovl{\mu}_v-\mu_v}{\ovl{\mu}_v\mu_v}}
\leq \frac{\eps d_v}{\mu_v} + \frac{|\ovl{\mu}_v-\mu_v|}{\mu_v}\leq \frac{2\eps d_v}{\mu_v}.
\end{flalign*}

The assumption that the network is nice says that $\mu_*\leq \mu_{uv}$ whenever $uv\in E$. That is, $d_v\mu_*\leq\mu_v.$

Combining these bounds we obtain the result.
\end{proof}

We now use Corollary \ref{gr:3rd} with

\begin{enumerate}
\item $\vec{u}(t):=(|\theta_v(t)-\ovl{\theta}_v(t)|)_{v \in V}.$
\item $\vec{a}(t):=\left(C\eps\right)_{v \in V},$ and each entry of this vector is non-negative.
\item $M(t)=M:=C(\oP+I),$ and this matrix does not depend on time $t$, it is entrywise non-negative, and it is finite dimensional since $G$ is finite.
\end{enumerate}

Therefore, for any $v \in V$
\begin{eqnarray} \label{eq:sameg:deltabound}
|\theta_v(t)-\ovl{\theta}_v(t)|&\leq & C\eps\expp{tC(\oP+I)}_v \nonumber
\end{eqnarray}

To relate this bound with the Random Walk in $(G,\ovl{\vecg{\mu}})$ observe that
\begin{eqnarray}
\expp{tC(\oP+I)}=e^{Ct}\expp{tC\oP}=e^{2Ct}e^{-Ct}\expp{tC\oP}. \nonumber
\end{eqnarray}

In the context of \cite[Section~5.1]{barlow2017random} we have that
\begin{flalign}\label{eq:heatkernel}
q_s(v,w):=\dfrac{1}{\ovl{\mu}_{w}}e^{-s}\expp{s\oP}_{vw}
\end{flalign}
 is the continuous time Heat Kernel for the Simple Random Walk in $(G,\ovl{\vecg{\mu}})$.

Therefore, since $q_s(v,w)\leq 1,$ for all $v,w\in V,$ then $e^{-s}\expp{s\ovl{P}}_{vw}\leq \ovl{\mu}_{w}\leq \mu^*d_w$ implies that
\begin{flalign*}
|\theta_v(t)-\ovl{\theta}_v(t)|\leq C\mu^*\eps e^{2CT}\sum_{w\in G}d_w=2C\mu^*e^{2CT}e(G)\eps.
\end{flalign*}
Notice that this bound is uniform in $t\in [0,T].$ This is enough to finish the proof.

\section{Linear Gronwall's inequality}
\label{subsec:gronwall}

\begin{proposition}[Corollary 2 of \cite{LinearGronwall}]\label{gr:2nd} Let the vector $\vec{a}(t)\in\R^n$ and the non-negative (entrywise) $n\times n$ matrices  $O(t)$, $M(t)$ be continuous functions of the single scalar variable $t$ for $t^{0}\leq t.$ Assume that $M(t)O(t)$ and $\int_{t^0}^{t}M(s)O(s)ds$ commute for $t^0\leq t$. If
\begin{eqnarray}
\vec{u}(t)\leq \vec{a}(t)+O(t)\int_{t^0}^{t}M(s)\vec{u}(s)ds, \ \ t^0\leq t \nonumber
\end{eqnarray}
then
\begin{eqnarray}
\vec{u}(t)\leq \vec{a}(t) + O(t)\int_{t^0}^{t}\expp{\int_{s}^{t}M(r)O(r)dr}M(s)\vec{a}(s)ds, \ \ t^0\leq t. \nonumber
\end{eqnarray}
\end{proposition}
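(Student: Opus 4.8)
The plan is to reduce this matrix integral inequality to a first-order linear differential inequality and then invoke a comparison principle that exploits the entrywise non-negativity of $O(t)$ and $M(t)$, with the commutativity hypothesis used only at the very end to identify a matrix exponential.

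First I would introduce the running integral $\vec{v}(t):=\int_{t^0}^t M(s)\vec{u}(s)\,ds$, so that $\vec{v}(t^0)=\vec{0}$ and $\vec{v}'(t)=M(t)\vec{u}(t)$. The hypothesis reads $\vec{u}(t)\leq \vec{a}(t)+O(t)\vec{v}(t)$, and since $M(t)$ has non-negative entries, multiplying through by $M(t)$ preserves the entrywise inequality, giving $\vec{v}'(t)\leq M(t)\vec{a}(t)+M(t)O(t)\,\vec{v}(t)$. Writing $A(t):=M(t)O(t)$ and $\vec{g}(t):=M(t)\vec{a}(t)$ (both continuous, $A(t)$ entrywise non-negative, $\vec{g}(t)$ entrywise non-negative since $M(t)\geq 0$ and $\vec{a}(t)\geq 0$), this becomes the differential inequality $\vec{v}'(t)\leq A(t)\vec{v}(t)+\vec{g}(t)$ with zero initial data.

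Next I would compare $\vec{v}$ with the solution $\vec{w}$ of the associated linear ODE $\vec{w}'(t)=A(t)\vec{w}(t)+\vec{g}(t)$, $\vec{w}(t^0)=\vec{0}$. Setting $\vec{e}:=\vec{w}-\vec{v}$ gives $\vec{e}'(t)\geq A(t)\vec{e}(t)$ with $\vec{e}(t^0)=\vec{0}$. Because $A(t)$ is entrywise non-negative it is in particular Metzler (non-negative off-diagonal entries), so the state-transition matrix $\Psi(t,s)$ of $\vec{w}'=A\vec{w}$ is itself entrywise non-negative, the flow preserving the positive orthant. Hence, by variation of parameters applied to $\vec{h}:=\vec{e}'-A\vec{e}\geq \vec{0}$, we get $\vec{e}(t)=\int_{t^0}^t\Psi(t,s)\bigl(\vec{e}'(s)-A(s)\vec{e}(s)\bigr)\,ds\geq \vec{0}$, i.e. $\vec{v}(t)\leq \vec{w}(t)$ entrywise for all $t\geq t^0$.

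It then remains to solve the ODE explicitly: variation of parameters gives $\vec{w}(t)=\int_{t^0}^t\Psi(t,s)\vec{g}(s)\,ds$, and here the commutativity hypothesis enters decisively. The assumption that $A(t)$ commutes with $\int_{t^0}^t A(s)\,ds$ is exactly what guarantees that $\Phi(t):=\expp{\int_{t^0}^t A(r)\,dr}$ satisfies $\Phi'(t)=A(t)\Phi(t)$, so that $\Psi(t,s)=\Phi(t)\Phi(s)^{-1}=\expp{\int_s^t A(r)\,dr}=\expp{\int_s^t M(r)O(r)\,dr}$. Substituting back through $\vec{u}(t)\leq \vec{a}(t)+O(t)\vec{v}(t)\leq \vec{a}(t)+O(t)\vec{w}(t)$ yields precisely the claimed bound. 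I expect this last step to be the main obstacle: without commutativity the propagator is only a time-ordered (Peano--Baker) series rather than the naive matrix exponential, and one must verify both that $\expp{\int_{t^0}^t A}$ is a genuine fundamental solution and that the product $\Phi(t)\Phi(s)^{-1}$ collapses to $\expp{\int_s^t A}$ — a point that is automatic in the paper's applications, where $M$ and $O$ are constant in $t$. An alternative route that sidesteps the comparison principle is to iterate $\vec{u}\leq \vec{a}+L\vec{u}$ for the positive integral operator $(L\vec{w})(t)=O(t)\int_{t^0}^t M(s)\vec{w}(s)\,ds$, obtaining the Neumann series $\vec{u}\leq\sum_{k\geq 0}L^k\vec{a}$ with remainder controlled by the simplex volume $(t-t^0)^k/k!$; the same commutativity then collapses the resulting ordered integrals into the stated matrix exponential.
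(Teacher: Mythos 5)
First, a point of comparison: the paper itself contains \emph{no} proof of this proposition --- it is imported verbatim from \cite{LinearGronwall}, and the only thing proved in the paper is the passage from it to Corollary~\ref{gr:3rd}. So your proposal stands or falls on its own. Its first three steps are correct and standard: setting $\vec{v}(t)=\int_{t^0}^t M(s)\vec{u}(s)\,ds$, multiplying the hypothesis by the non-negative matrix $M(t)$, comparing $\vec{v}$ with the solution $\vec{w}$ of $\vec{w}'=A\vec{w}+\vec{g}$ (with $A=MO$, $\vec{g}=M\vec{a}$) via the entrywise non-negativity of the state-transition matrix $\Psi(t,s)$, and then variation of parameters, which together give $\vec{u}(t)\le \vec{a}(t)+O(t)\int_{t^0}^t \Psi(t,s)M(s)\vec{a}(s)\,ds$. (Minor slip: you assert $\vec{g}\ge 0$ ``since $\vec{a}\ge 0$'', but the statement makes no sign assumption on $\vec{a}$; fortunately this is never used.)

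The genuine gap is the step you flagged and then passed over: the identification $\Psi(t,s)=\Phi(t)\Phi(s)^{-1}=\expp{\int_s^t A(r)\,dr}$. The stated hypothesis, $[A(t),B(t)]=0$ with $B(t):=\int_{t^0}^t A(s)\,ds$, does imply that $\Phi(t):=\expp{B(t)}$ solves $\Phi'=A\Phi$, $\Phi(t^0)=I$, hence $\Psi(t,s)=e^{B(t)}e^{-B(s)}$. But collapsing that product to $e^{B(t)-B(s)}$ requires $[B(t),B(s)]=0$, and this does \emph{not} follow from equal-time commutativity with a fixed base point. Counterexample: on $[0,2]$ let $A(t)=\mathrm{diag}(a_1(t),a_2(t))$ with $a_1,a_2\ge 0$ unit-mass bumps supported in $(0,1)$ and $(1,2)$ respectively, and on $[2,3]$ let $A(t)=h(t)F$, where $F$ is the $2\times 2$ matrix with ones off the diagonal and zeros on it, and $h\ge 0$ is a unit-mass bump. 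Then $A$ is continuous, non-negative, and $[A(t),B(t)]=0$ for every $t$: in the first phase all matrices are diagonal, and at the junction $B(2)=I$ is scalar, so the second phase commutes as well. Yet for $s_0\in(0,1)$ one has $B(s_0)=\mathrm{diag}(c,0)$ with $0<c<1$, which does not commute with $B(3)=I+F$; numerically, for $c=0.4$ the $(1,2)$ entry of $\Psi(3,s_0)=e^{B(3)}e^{-B(s_0)}$ is $\approx 3.19$ while that of $e^{B(3)-B(s_0)}$ is $\approx 2.63$. So the chain of equalities you wrote is false; in fact the stated conclusion itself can be violated for this $A$ (take $M=A$, $O=I$, let $\vec{u}$ solve the integral equation with equality, and take $\vec{a}=-\chi\mathbf{1}$ with $\chi$ a bump near $s_0$: the first column of $e^{B(3)-B(s)}-\Psi(3,s)$ is strictly positive there, which reverses the desired inequality). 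The repair is to read (or restate) the commutativity hypothesis in its two-parameter form --- $M(t)O(t)$ commutes with $\int_s^t M(r)O(r)\,dr$ for all $t^0\le s\le t$ --- under which your argument closes immediately: then $\partial_t \expp{\int_s^t A}=A(t)\expp{\int_s^t A}$ for each fixed $s$, and uniqueness of solutions gives $\Psi(t,s)=\expp{\int_s^t A}$. In the only case the paper actually uses (Corollary~\ref{gr:3rd}: $O=I$ and $M$ constant in time), every matrix in sight commutes, and there your proof is complete and correct.
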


\begin{corollary}\label{gr:3rd} Assume the hypothesis in Proposition \ref{gr:2nd} and additionally suppose that $O(t)=Id$ and $\vec{a}(t)$ is entrywise non-decreasing in each entry. In this case 
\begin{eqnarray}
\vec{u}(t)\leq \vec{a}(t)+\int_{t^0}^{t}M(s)\vec{u}(s)ds, \ \ t^0\leq t
\end{eqnarray}
implies
\begin{eqnarray}
\vec{u}(t)\leq \expp{\int_{t^0}^{t}M(s)ds}\vec{a}(t), \ \ t^0\leq t.
\end{eqnarray}
\end{corollary}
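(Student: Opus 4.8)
The plan is to deduce this corollary directly from Proposition~\ref{gr:2nd} by specializing $O(t)$ to the identity matrix $I$, and then to simplify the resulting integral bound using the extra hypotheses (monotonicity of $\vec{a}$ and entrywise non-negativity of $M$). With $O(t)=I$, the commutativity requirement of Proposition~\ref{gr:2nd} reduces to ``$M(t)$ and $\int_{t^0}^{t}M(s)\,ds$ commute'', which is exactly part of the assumed hypotheses, so the proposition applies and yields
\[
\vec{u}(t)\leq \vec{a}(t)+\int_{t^0}^{t}\expp{\int_{s}^{t}M(r)\,dr}\,M(s)\,\vec{a}(s)\,ds, \qquad t^0\leq t.
\]
The whole task is then to show that the integral term is dominated by $\left(\expp{\int_{t^0}^t M(r)\,dr}-I\right)\vec{a}(t)$.

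First I would exploit monotonicity. Since $M$ is entrywise non-negative, so is $\int_s^t M(r)\,dr$, and hence so is its exponential, because $\expp{A}=\sum_{k\geq 0}A^k/k!$ is a sum of products of entrywise non-negative matrices; consequently $B(s):=\expp{\int_s^t M(r)\,dr}\,M(s)$ is entrywise non-negative for every $s\in[t^0,t]$. As $\vec{a}$ is entrywise non-decreasing we have $\vec{a}(s)\leq\vec{a}(t)$ entrywise for $s\leq t$, and multiplying an entrywise non-negative matrix by a pointwise-dominated vector preserves the inequality, i.e. $B(s)\vec{a}(s)\leq B(s)\vec{a}(t)$. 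Integrating and factoring out the constant vector $\vec{a}(t)$ gives
\[
\vec{u}(t)\leq \vec{a}(t)+\left(\int_{t^0}^{t}\expp{\int_{s}^{t}M(r)\,dr}\,M(s)\,ds\right)\vec{a}(t).
\]

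Finally I would evaluate the matrix integral. Writing $\Phi(s):=\expp{\int_s^t M(r)\,dr}$, I would use that $\tfrac{d}{ds}\Phi(s)=-M(s)\Phi(s)$, so that $\int_{t^0}^{t}\Phi(s)M(s)\,ds=\Phi(t^0)-\Phi(t)=\expp{\int_{t^0}^{t}M(r)\,dr}-I$; substituting this in and cancelling the $-I\,\vec{a}(t)$ against the leading $\vec{a}(t)$ term yields the claimed bound $\vec{u}(t)\leq \expp{\int_{t^0}^{t}M(r)\,dr}\,\vec{a}(t)$. The one genuinely delicate point, and the main obstacle, is this last differentiation of a matrix exponential: the identity $\tfrac{d}{ds}\expp{\int_s^t M}= -M(s)\expp{\int_s^t M}$ is valid only when $M(s)$ commutes with $\int_s^t M(r)\,dr$, which is precisely the flavour of commutativity assumed in Proposition~\ref{gr:2nd}. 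I would emphasize that in every application in this paper (for instance Lemma~\ref{le:carne3} and Lemma~\ref{le:samegraph}, where $M=C(\oP+I)$ is constant in $t$) all the matrices involved commute trivially, so the integral identity and the whole argument go through without friction.
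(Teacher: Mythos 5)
Your proposal is correct and takes essentially the same route as the paper's own proof: specialize Proposition~\ref{gr:2nd} to $O(t)=Id$, use entrywise non-negativity of the matrix exponential together with the monotonicity of $\vec{a}$ to replace $\vec{a}(s)$ by $\vec{a}(t)$, and then evaluate $\int_{t^0}^{t}\expp{\int_{s}^{t}M(r)dr}M(s)\,ds$ by the fundamental theorem of calculus so that the identity term cancels against the leading $\vec{a}(t)$. If anything, your bookkeeping in the last step is more careful than the paper's, which writes $\expp{\int_{s}^{t}M(r)dr}M(s)=\frac{d}{ds}\expp{\int_{s}^{t}M(r)dr}$ without the minus sign and then compensates by integrating with the endpoints effectively swapped, whereas your version (with $\frac{d}{ds}\Phi(s)=-M(s)\Phi(s)$ and $\int_{t^0}^{t}\Phi(s)M(s)\,ds=\Phi(t^0)-\Phi(t)$) has the signs right, and you correctly flag that this differentiation identity is where the commutativity hypothesis is genuinely used.
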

\begin{proof}
From Proposition \ref{gr:2nd}, and the fact that the exponential of a non-negative matrix is non-negative (every entry of each power is non-negative) we can compute
\begin{eqnarray}
\vec{u}(t)&\leq &\vec{a}(t) + \int_{t^0}^{t}\expp{\int_{s}^{t}M(r)dr}M(s)\vec{a}(s)ds \nonumber \\
(\vec{a}(s)\leq \vec{a}(t) \mbox{ whenever } s\leq t)&\leq & \vec{a}(t) + \left(\int_{t^0}^{t}\expp{\int_{s}^{t}M(r)dr}M(s)ds\right)\vec{a}(t) \nonumber \\
&= & \vec{a}(t) + \left(\int_{t^0}^{t}\dfrac{d}{ds}\expp{\int_{s}^{t}M(r)dr}ds\right)\vec{a}(t) \nonumber \\
&= & \vec{a}(t) -\vec{a}(t)+ \expp{\int_{t^0}^{t}M(s)ds}\vec{a}(t) \nonumber \\
&=& \expp{\int_{t^0}^{t}M(s)ds}\vec{a}(t). \nonumber 
\end{eqnarray}
\end{proof}

\bibliographystyle{plain} 

\bibliography{teseReis}
\end{document}